\newtheorem{assum}{Assumption}
\newtheorem{prop}{Proposition}
\newtheorem{coro}{Corollary}
\newtheorem{remark}{Remark}
\newtheorem{lem}{Lemma}
\def\f{\frac}
\def\ep{\varepsilon}
\def\i1n{i=1,\cdots,n}
\def\j1n{j=1,\cdots,n}
\def\ij1n{i,j=1,\cdots,n}
\newcommand{\be}{\begin{equation}}
\newcommand{\ee}{\end{equation}}
\newcommand{\beq}{\begin{equation*}}
\newcommand{\eeq}{\end{equation*}}
\newtheorem{thm}{Theorem}
\begin{document}
\setlength\abovedisplayskip{8pt}
\setlength\belowdisplayskip{8pt}
\setlength\abovedisplayshortskip{8pt}
\setlength\belowdisplayshortskip{8pt}
 
\allowdisplaybreaks
 
\setlength{\parindent}{1em}
\setlength{\parskip}{0em}
%
\title{Control and State Estimation of the One-Phase \\Stefan Problem via Backstepping Design}
%
%
%

\author{Shumon~Koga,~\IEEEmembership{Student Member,~IEEE,}
        Mamadou~Diagne,~\IEEEmembership{Member,~IEEE,}
        and~Miroslav~Krstic,~\IEEEmembership{Fellow,~IEEE}
\thanks{S. Koga and M. Krstic are with the Department
of Mechanical and Aerospace Engineering, University of California at San Diego, La Jolla,
CA, 92093-0411 USA (e-mail: skoga@ucsd.edu; krstic@ucsd.edu).}
\thanks{M. Diagne is with the Department of Mechanical Aerospace and Nuclear Engineering of Rensselaer Polytechnic Institute , Troy, NY 12180-3590, USA (email: diagnm@rpi.edu).}}

\maketitle

\begin{abstract}
This paper develops a control and estimation design for the one-phase Stefan problem. The Stefan problem represents a liquid-solid phase transition as time evolution of a temperature profile in a liquid-solid material and its moving interface. This physical process is mathematically formulated as a diffusion partial differential equation (PDE) evolving on a time-varying spatial domain described by an ordinary differential equation (ODE). The state-dependency of the moving interface makes the coupled PDE-ODE system a nonlinear and challenging problem. We propose a full-state feedback control law, an observer design, and the associated output-feedback control law via the backstepping method. The designed observer allows estimation of the temperature profile based on the available measurement of solid phase length. The associated output-feedback controller ensures the global exponential stability of the estimation errors, the ${\cal H}_1$-norm of the distributed temperature, and the moving interface to the desired setpoint under some explicitly given restrictions on the setpoint and observer gain.  The exponential stability results are established considering Neumann and Dirichlet boundary actuations.
\end{abstract}

\begin{IEEEkeywords}
Stefan problem, backstepping, distributed parameter systems, moving  boundary, nonlinear stabilization.
\end{IEEEkeywords}

%
\IEEEpeerreviewmaketitle

\section{Introduction}
\paragraph{Background}
\hspace{10mm}
\IEEEPARstart{S}{tefan} problem,  known as a thermodynamical model of liquid-solid phase transition, has been widely studied since Jeseph Stefan's work in 1889\cite{stefan91}. Typical applications include sea ice melting and freezing \cite{wettlaufer91, maykut71}, continuous casting of steel \cite{petrus12}, crystal-growth \cite{conrad_90},  thermal energy storage system \cite{zalba03}, and lithium-ion batteries \cite{srinivasan04}. For instance, time evolution of the Arctic sea ice thickness and temperature profile was modeled in \cite{maykut71} using the Stefan problem, and the correspondence with the empirical data was investigated. Apart from the thermodynamical model, the Stefan problem has been employed to model a type of population growth, such as in \cite{Friedman1999} for tumor growth process and in \cite{Lei2013} for information diffusion on social networks.

While phase change phenomena described by the  Stefan condition appear in various kinds of science and engineering processes, their mathematical analysis remains quite challenging due to the implicitly given moving interface that reflects the time evolution of a spatial domain, so-called ``free boundary problem" \cite{Gupta03}. Physically, the classical one-phase Stefan problem describes the temperature profile along a liquid-solid material, where the dynamics of the liquid-solid interface is influenced by the heat flux induced by melting or solidification phenomena. Mathematically, the problem  involves a diffusion partial differential equation (PDE) coupled with an ordinary differential equation (ODE). Here, the PDE describes the heat diffusion that provokes melting or solidification of a given material and the ODE  delineates the time-evolution  of the moving front at the solid-liquid interface.

 While the numerical analysis of  the one-phase Stefan problem is broadly covered in the literature, their control related problems have been addressed relatively fewer. In addition to it, most of the proposed control approaches are based on finite dimensional approximations with the assumption of  an explicitly given moving boundary dynamics \cite{Daraoui2010},\cite{Armaou01},\cite{Petit10}. Diffusion-reaction processes with an explicitly known moving boundary dynamics are investigated in  \cite{Armaou01} based on the concept of inertial manifold \cite{Christofides98_Parabolic} and the  partitioning of the infinite dimensional dynamics into slow and fast  finite dimensional modes.     Motion planning boundary control has been adopted in   \cite{Petit10} to ensure asymptotic stability of a one-dimensional one-phase nonlinear Stefan problem   assuming  a prior known  moving boundary  and deriving the manipulated input  from the solutions of   the inverse problem. However, the series representation introduced in   \cite{Petit10}  leads to highly complex solutions that reduce controller design possibilities.

For control objectives, infinite-dimensional frameworks that lead to significant challenges in the process characterization have been developed for the stabilization of  the temperature profile and the moving interface of the Stefan problem. An  enthalpy-based boundary feedback control law that ensures asymptotical stability of the temperature profile and the moving boundary at a desired reference, has been employed in \cite{petrus12}.  Lyapunov analysis is performed in  \cite{maidi2014} based on a  geometric control approach which enables  to adjust the position of a liquid-solid interface at a desired setpoint while  exponentially stabilizing  the ${\cal L}_2$-norm of the distributed temperature. However, the  results in \cite{maidi2014}  are stated based on physical assumptions on the liquid temperature being greater than melting point, which needs to be guaranteed by showing strictly positive boundary input.

Backstepping controller design employs  an invertible transformation that  maps an original
 system into a stable target system. During the past few decades, such a controller design technique has been intensely exploited for the boundary control of diffusion  PDEs defined on a fixed spatial domain as firstly introduced in  \cite{Dejan2001} for the control of a heat equation via measurement of domain-average temperature. For a class of one-dimensional linear parabolic partial integro-differential equations, a state-estimator called backstepping observer was introduced in \cite{Smyshlyaev2005}, which can be combined to the earlier state-feedback boundary controller designed for an identical system \cite{Smyshlyaev2004} to systematically construct output feedback regulators. Over the same period, advection-reaction-diffusion systems with space-dependent thermal conductivity or time-dependent reactivity coefficient were studied in \cite{Smyshlyaev2005b}, and parabolic PDEs containing unknown destabilizing parameters affecting the
interior of the domain or unknown boundary parameters  were adaptively controlled in  \cite{Krstic2008adap,Krstic2008adap2,Krstic2008adap3} combining the backstepping control technique with passive or swapping identifiers. Further advances  on the backstepping control of diffusion equations defined on a multidimensional space or involving in-domain coupled systems can  be found in \cite{Baccoli2015,Deut2016,Vazquez2017,Vazquez2017b}.  

Results devoted to the backstepping stabilization of coupled systems described by a diffusion PDE in cascade with a linear ODE has been primarily presented in \cite{krstic09} with Dirichlet type of boundary interconnection and extended to Neuman boundary interconnection in \cite{susto10, tang11}. For systems relevant with Stefan problem, \cite{Izadi15} designed a backstepping output feedback controller that ensures the exponential stability of an unstable parabolic PDE on \emph{a priori} known dynamics of moving interface which is assumed to be an analytic function in time. Moreover, for PDE-ODE cascaded systems under a state-dependent moving boundary, \cite{Cai2015} developed a local stabilization of nonlinear ODEs with actuator dynamics modeled by wave PDE under time- and state-dependent moving boundary. Such a technique is based on the input delay and wave compensation for nonlinear ODEs designed in \cite{krstic2010, bekiaris2014wave} and its extension to state-dependent input delay compensation for nonlinear ODEs provided by \cite{bekiaris2013compensation}. However, the results in \cite{Cai2015} and \cite{bekiaris2013compensation} that cover state-dependence problems do not ensure global stabilization due to a so-called feasibility condition that needs to be satisfied \emph{a priori}. 

\paragraph{Results and contributions}
Our previous result in \cite{Shumon16} is the first contribution in which global exponential stability of the Stefan problem with an implicitly given moving interface motion is established without imposing any \emph{a priori} given restriction under a state feedback design of Neumann boundary control, and in \cite{Shumon16CDC} we developed the design of an observer-based output feedback control. This paper combines the results of our conference papers \cite{Shumon16} and \cite{Shumon16CDC} and extends them by providing the robustness analysis of the closed-loop system with the state feedback control to the plant parameters' mismatch and the corresponding results under Dirichlet boundary control.

First, a state feedback control law that requires the measurement of the liquid phase temperature profile and the moving interface position is constructed using a novel nonlinear backstepping transformation.  The proposed state feedback controller achieves exponential stabilization of the temperature profile and the moving interface to the desired references in ${\cal H}_1$-norm under the least restrictive condition on the setpoint imposed by energy conservation law.  Robustness of the state feedback controller to thermal diffusivity and latent heat of fusion mismatch is also characterized by explicitly given bounds of the uncertainties' magnitude. Second, an exponentially state observer  which reconstruct the entire distributed temperature profile based solely on the measurement of only the interface position, namely, the length of the melting zone is constructed using the novel backstepping transformation. Finally, combining the state feedback law with the state estimator,  the exponential stabilization of the estimation error, the temperature profile, and the moving interface to the desired references  in the ${\cal H}_{1}$-norm under some explicitly given restrictions on the observer gain and the setpoint.

\paragraph{Organizations}
The one-phase Stefan problem  with Neumann boundary actuation is presented  in Section \ref{model}, and its open loop-stability is discussed in Section \ref{open}. The full-state feedback controller  is constructed  in Section \ref{state-feedback} with a robustness analysis to parameters perturbations. Section \ref{sec:estimation} explains the observer design and in Section \ref{sec:output} is presented the observer-based output-feedback control. In Section \ref{sec:tempcontrol},   both state feedback and output feedback are derived for a Dirichlet type of boundary actuation and robustness analysis is provided for the state feedback case. Simulations to support the theoretical results are given in Section \ref{simu}. The paper ends with final remarks and future directions discussed in  Section \ref{sec:conclusion}.

\paragraph{Notations}
Throughout this paper, partial derivatives and several norms are denoted as 
\begin{align}
u_{t}(x,t) =& \frac{\partial u}{\partial t} (x,t), \quad u_{x}(x,t) = \frac{\partial u}{\partial x} (x,t), \notag\\
||u||_{{\cal L}_2} = &\sqrt{\int_0^{s(t)} u(x,t)^2 dx}, \quad ||u||_{{\cal H}_1} = \sqrt{||u||_{{\cal L}_2}^2 + ||u_{x}||_{{\cal L}_2}^2} \notag
\end{align}

\section*{Part I: Neumann Boundary  actuation}

In this first part, we design a boundary controller for the Stefan problem with a Neumannn boundary actuation.

\section{Description of the Physical Process}\label{model}
\subsection{Phase change in a pure material} 
The one-dimensional Stefan Problem is defined as the  physical model which describes the melting or the solidification mechanism in a pure one-component material of length $L$ in one dimension as depicted  in Fig. \ref{fig:stefan}. The dynamics of the process depends strongly  on the evolution in time of the moving interface (here reduced to a point)  at which phase transition from liquid to solid (or equivalently, in the reverse direction) occurs. Therefore, the melting or solidification mechanism that takes place in  the physical  domain $[0, L]$ induces the existence of   two complementary  time-varying sub-domains, namely,  $[0,s(t)]$ occupied by the liquid phase, and $[s(t),L]$ by the solid phase. Assuming a temperature profile  uniformly equivalent to the  melting temperature in the solid phase,  a dynamical model associated to a melting phenomena  (see Fig. \ref{fig:stefan}) involves only the thermal behavior of  the liquid phase. At a fundamental level, the thermal conduction for a melting component obeys the well known Fourier's Law 
\begin{align}\label{fourier}
q(x,t) = -k T_x (x,t)
\end{align}
where $q(x,t)$ is a heat flux profile, $k$ is the thermal conductivity, and $T(x,t)$ is a temperature profile. Considering a melting material with a density $\rho$ and  heat capacity $C_p$, in the open domain $(0, s(t))$, the energy conservation law  is defined by the following equality
\begin{align}\label{energy}
\rho C_p T_t(x,t) = -q_x(x,t). 
\end{align}
\begin{figure}[t]
\centering
\includegraphics[width=3.2in]{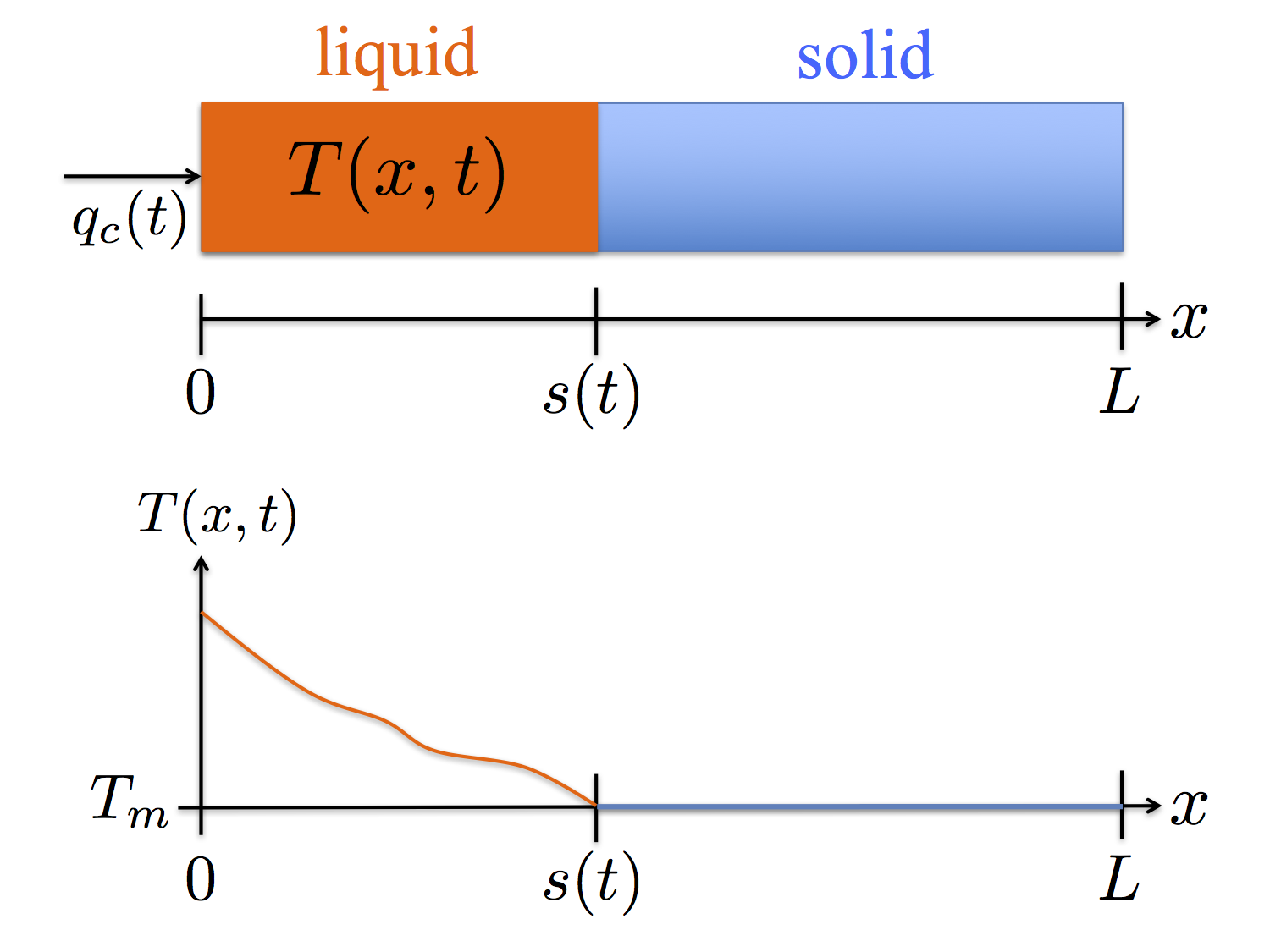}\\
\caption{Schematic of 1D Stefan problem.}
\label{fig:stefan}
\end{figure}
Therefore, a heat flux  entering the system at the boundary at $x=0$ of the liquid phase influences  the dynamics of the solid-liquid interface $s(t)$. Assuming that the temperature in the liquid phase is not lower than the melting temperature  of the material $T_{{\mathrm m}}$ and combining \eqref{fourier} and \eqref{energy}, one can derive the 
 heat equation of  the liquid phase as follows
\begin{align}\label{eq:stefanPDE}
T_t(x,t)=\alpha T_{xx}(x,t), \hspace{1mm} 0\leq x\leq s(t), \hspace{1mm} \alpha :=\f{k}{\rho C_p}, 
\end{align}
with the boundary conditions
\begin{align}\label{eq:stefancontrol}
-k T_x(0,t)=&q_{{\mathrm c}}(t), \\ \label{eq:stefanBC}
T(s(t),t)=&T_{{\mathrm m}},
\end{align}
and the initial conditions
\begin{align}\label{eq:stefanIC}
T(x,0)=T_0(x), \quad s(0) = s_0.
\end{align}
Moreover, for the process described in Fig. \ref{fig:stefan}, 
the local  energy balance at the position of the liquid-solid interface $x=s(t)$ leads to the Stefan condition defined as  the following ODE
\begin{align}\label{eq:stefanODE}
 \dot{s}(t)=-\beta T_x(s(t),t), \quad \beta :=\frac{k}{\rho \Delta H^*},
\end{align}
where $\Delta H^*$ denotes the latent heat of fusion.
Equation \eqref{eq:stefanODE}   expresses the velocity of the liquid-solid moving interface. 

For the sake of brevity, we refer the readers to  \cite{Gupta03}, where the Stefan condition is derived for a solidification process. 
\subsection{Some key properties of the physical model} 

For an  homogeneous melting material, the Stefan Problem presented in Section \ref{model}
 exhibits some important properties  that are stated in the following remarks.

\begin{remark}\emph{
 As the moving interface $s(t)$  governed by \eqref{eq:stefanODE} is unknown explicitly, the problem defined in  \eqref{eq:stefanPDE}--\eqref{eq:stefanODE}  is a nonlinear
problem. Note that this non-linearity is purely geometric
rather than algebraic.}\end{remark}
\begin{remark}\label{assumption}\emph{
Due to the so-called isothermal interface condition
 that prescribes the melting temperature $T_{{\mathrm m}}$ at the interface through  \eqref{eq:stefanBC},
this form of the Stefan problem is a reasonable model only if the following condition holds:
\begin{align}
\label{valid1}T(x,t) \geq& T_{{\mathrm m}} \quad \textrm{ for all }\quad x \in [0,s(t)],
\end{align}}
\end{remark}
The model validity requires the liquid temperature to be greater than the melting temperature and such a condition  yields the following property on moving interface. 
\begin{coro}\label{monoinc}
If the model validity \eqref{valid1} holds, then the moving interface is always increasing, i.e. 
\begin{align}
\label{valid2}\dot{s}(t)\geq&0 \quad \textrm{ for all }\quad t\geq0
\end{align}
\end{coro}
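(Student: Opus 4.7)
The plan is to exploit the Stefan condition \eqref{eq:stefanODE}, namely $\dot{s}(t) = -\beta T_x(s(t),t)$ with $\beta>0$, so that the sign of $\dot{s}(t)$ is entirely determined by the sign of $T_x(s(t),t)$. The goal therefore reduces to showing $T_x(s(t),t) \leq 0$ under the hypothesis \eqref{valid1}.

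To obtain this, I would combine the hypothesis $T(x,t)\geq T_{\mathrm m}$ on $[0,s(t)]$ with the Dirichlet-type interface condition \eqref{eq:stefanBC}, which states $T(s(t),t)=T_{\mathrm m}$. Together they say that $T(\cdot,t)$ attains its minimum over the closed interval $[0,s(t)]$ at the right endpoint $x=s(t)$. Then, evaluating the one-sided (left) spatial derivative at that endpoint, for any sufficiently small $h>0$ we have
\begin{equation*}
\frac{T(s(t),t)-T(s(t)-h,t)}{h} \leq 0,
\end{equation*}
since the numerator is non-positive by the minimum property. Passing to the limit $h\to 0^{+}$ yields $T_x(s(t),t)\leq 0$, and multiplying by $-\beta<0$ gives \eqref{valid2}.

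There is essentially no obstacle: the argument is a one-sided-derivative-at-a-boundary-minimum observation plugged into \eqref{eq:stefanODE}. The only mild regularity issue is that the derivative $T_x(s(t),t)$ must exist as a (left) limit for the argument to make sense; this is implicitly granted by the fact that \eqref{eq:stefanODE} is written down in the model, so $T$ is assumed smooth enough up to the moving interface. A stronger statement $\dot s(t)>0$ (strict) would require invoking Hopf's lemma for the parabolic equation \eqref{eq:stefanPDE}, but this is not needed for the corollary as stated.
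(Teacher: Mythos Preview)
Your argument is correct. The paper does not give a self-contained proof but simply states that the corollary ``is established by Hopf's Lemma as shown in \cite{Gupta03}.'' Your approach is more elementary: you use only that $T(\cdot,t)$ attains its minimum at the right endpoint $x=s(t)$, from which the nonpositivity of the left derivative follows by the difference-quotient argument, and then \eqref{eq:stefanODE} gives $\dot s(t)\geq 0$. The paper's reliance on Hopf's boundary-point lemma actually delivers the \emph{strict} inequality $T_x(s(t),t)<0$ (whenever $T(\cdot,t)\not\equiv T_{\mathrm m}$), hence $\dot s(t)>0$, which is stronger than what Corollary~\ref{monoinc} claims and is in fact what the paper uses later (e.g., in the Lyapunov estimates and in the contradiction argument in Section~\ref{sec:robust}). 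Your remark at the end already notes this distinction; for the corollary exactly as stated, your elementary argument suffices and avoids invoking parabolic maximum-principle machinery.
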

Corollary \ref{monoinc} is established by Hopf's Lemma as shown in \cite{Gupta03}. Hence, it is plausible to impose the following assumption on the initial values. 
\begin{assum}\label{assumini}
The initial position of the moving interface satisfies  $s_0>0$ and the Lipschitz continuity of $T_0(x)$ holds, i.e.  
\begin{align}\label{eq:stefanICbound}
0\leq T_0(x)-T_{{\mathrm m}}\leq H(s_0-x)
\end{align}
\end{assum}
Assumption \ref{assumini} is physically consistent with Remark \ref{assumption}. We recall the following lemma that ensures \eqref{valid1}, \eqref{valid2} for the validity of the model \eqref{eq:stefanPDE}--\eqref{eq:stefanODE}.
\begin{lem}\label{lemma1}
For any $q_{{\mathrm c}}(t)>0$ on the finite time interval $(0,\bar{t})$, the condition of the model validity \eqref{valid1} holds.
\end{lem}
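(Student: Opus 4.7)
The plan is to reduce \eqref{valid1} to a minimum principle statement by shifting variables: set $u(x,t) := T(x,t) - T_{\mathrm m}$, so that $u$ satisfies $u_t = \alpha u_{xx}$ on the parabolic cylinder $\Omega_{\bar t} := \{(x,t) : 0 \le x \le s(t),\; 0 \le t \le \bar t\}$, with homogeneous Dirichlet condition $u(s(t),t)=0$ on the free boundary, Neumann condition $-k\,u_x(0,t) = q_{\mathrm c}(t) > 0$ on the actuated boundary, and nonnegative initial datum $u(x,0) \ge 0$ on $[0,s_0]$ from Assumption \ref{assumini}. Showing $u \ge 0$ on $\Omega_{\bar t}$ is equivalent to the claim.

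I would argue by contradiction using the parabolic minimum principle. Suppose $\min_{\Omega_{\bar t}} u < 0$; by continuity this minimum is attained at some $(x^\ast,t^\ast)$. The strong minimum principle for the heat equation rules out the parabolic interior. The bottom face $t=0$ is excluded because $u(x,0) \ge 0$ there, and the curve $x = s(t)$ is excluded because $u$ vanishes identically on it. The only remaining possibility is $x^\ast = 0$ with $t^\ast \in (0,\bar t]$. At such a boundary point, Hopf's boundary-point lemma applies and yields the strict sign $u_x(0,t^\ast) > 0$. This contradicts the Neumann condition $u_x(0,t^\ast) = -q_{\mathrm c}(t^\ast)/k < 0$ guaranteed by the hypothesis $q_{\mathrm c}>0$. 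Hence $u \ge 0$ on $\Omega_{\bar t}$, which is exactly \eqref{valid1}.

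The one delicate point is that $\Omega_{\bar t}$ is a time-varying domain whose lateral boundary $x=s(t)$ is only implicitly defined through the coupled Stefan condition \eqref{eq:stefanODE}. To invoke the classical maximum principle and Hopf's lemma cleanly, I would first note that $s \in C^1[0,\bar t]$ with $s(t) \ge s_0 > 0$ (by Assumption \ref{assumini} and local existence for the coupled PDE--ODE system, shrinking $\bar t$ if necessary), so the cylinder is bounded with a regular lateral boundary satisfying an interior-sphere condition. The main obstacle is therefore bookkeeping rather than a genuine difficulty: one must make sure the strong minimum principle and Hopf's lemma are applicable in this non-cylindrical setting. A convenient way to sidestep any ambiguity is the change of variable $y = x/s(t)$, which maps $\Omega_{\bar t}$ onto the fixed rectangle $[0,1]\times[0,\bar t]$ and converts \eqref{eq:stefanPDE} into a uniformly parabolic equation in $y$ with a bounded drift term proportional to $\dot s/s$; the classical maximum principle and Hopf's lemma then apply verbatim, and the contradiction above goes through in the transformed coordinates, completing the proof.
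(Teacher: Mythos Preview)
Your argument is correct and is precisely the maximum-principle approach the paper invokes: the paper does not spell out a proof but simply refers to the maximum principle in \cite{Gupta03}, and your contradiction via the strong minimum principle plus Hopf's lemma at $x=0$ is the standard way to make that reference explicit. Your treatment of the non-cylindrical domain via the boundary-immobilizing substitution $y=x/s(t)$ is also in line with the paper's own methodology (cf.\ the numerical section), so there is no meaningful divergence from the paper's intended proof.
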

The proof of Lemma \ref{lemma1} is based on Maximum principle as shown in \cite{Gupta03}. In this paper, we focus on this melting problem which imposes the constraint to keep the manipulated heat flux positive. 
\section{Control problem statement and an open-loop stability}\label{open}
\subsection{Problem statement}
The steady-state solution $(T_{{\rm eq}}(x), s_{{\rm eq}})$ of the system \eqref{eq:stefanPDE}-\eqref{eq:stefanODE} with zero manipulating heat flux $q_{c}(t)=0$ yields a uniform melting temperature $T_{{\rm eq}}(x) = T_{{\mathrm m}}$ and a constant interface position given by the initial data. Hence, the system \eqref{eq:stefanPDE}-\eqref{eq:stefanODE} is marginally stable. In this section, we consider the asymptotical stabilization of the interface position $s(t)$ at a desired reference setpoint $s_{{\mathrm r}}$, while the equilibrium temperature profile is maintained  at $T_{{\mathrm m}}$. Thus, the control objective is formulated as
\begin{align}
\lim_{t\to \infty} s(t) &= s_{{\mathrm r}},\label{c1}\\
\lim_{t\to \infty} T(x,t) &= T_{{\mathrm m}}.\label{c2}
\end{align}

\subsection{Setpoint restriction by an energy conservation}
The positivity of the manipulated heat flux in Lemma \ref{lemma1} imposes a restriction on the setpoint due to an energy conservation of the system \eqref{eq:stefanPDE}-\eqref{eq:stefanODE}. The energy conservation law is given by
\begin{align}\label{1stlaw}
\frac{d}{dt}\left(\frac{1}{\alpha}\int_0^{s(t)} (T(x,t)-T_{{\mathrm m}}) dx +\frac{1}{\beta} s(t)\right) = \frac{q_{{\mathrm c}}(t)}{k}. 
\end{align} 
The left hand side of \eqref{1stlaw} denotes the growth of internal energy and its right hand side denotes the external work provided by the injected heat flux. Integrating  the energy balance  \eqref{1stlaw} in $t$ from $0$ to $\infty$ and substituting  \eqref{c1} and \eqref{c2}, the condition to achieve the control objective is given by
 \begin{align}\label{1stlawinteg}
\frac{1}{\beta}(s_{{\mathrm r}}-s_0)-\frac{1}{\alpha}\int_{0}^{s_0} (T_0(x)-T_{{\mathrm m}}) dx = \int_0^{\infty} \frac{q_{{\mathrm c}}(t)}{k} dt .
\end{align}
From relation  \eqref{1stlawinteg}, one can deduce that for any positive heat flux control action which imposes $q_{{\mathrm c}}(t)>0$, the internal energy for a given setpoint must be greater than the initial internal energy. Thus, the following assumption is required.
\begin{assum}\label{assumsetpoint}
The setpoint $s_{{\mathrm r}}$ is chosen to satisfy 
\begin{align}\label{compatibility}
s_{{\mathrm r}}>s_0+\frac{\beta}{\alpha }\int_{0}^{s_0} (T_0(x)-T_{{\mathrm m}}) dx. 
\end{align}
\end{assum}
Therefore, Assumption \ref{assumsetpoint} stands as a least restrictive condition for the choice of setpoint. 
\subsection{Open-loop setpoint control law }\label{sec:pulse}

For any given open-loop  control law $q_c(t)$ satisfying \eqref{1stlawinteg}, the asymptotical stability of system \eqref{eq:stefanPDE}--\eqref{eq:stefanODE} at $s_{\rm r}$ can be  established and the following lemma holds.

\begin{lem}\label{rec*}
Consider an open-loop setpoint control law  $q^{\star}_{{\mathrm c}}(t)$ which satisfies \eqref{1stlawinteg}.  Then, the interface converges asymptotically to the prescribed setpoint $s_{{\mathrm r}}$ and consequently, conditions \eqref{c1} and \eqref{c2} hold. \end{lem}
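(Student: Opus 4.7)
The plan is to combine monotonicity of the interface, boundedness from the energy identity, and a parabolic decay argument for the residual temperature. Throughout, I will use that $q^{\star}_{{\mathrm c}}$ is positive (so Lemma \ref{lemma1} applies and $T \geq T_{{\mathrm m}}$), hence by Corollary \ref{monoinc} one has $\dot s(t) \geq 0$.

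First I would integrate the energy identity \eqref{1stlaw} from $0$ to $t$ and substitute the compatibility relation \eqref{1stlawinteg} to obtain the pointwise-in-time identity
\begin{align*}
\frac{1}{\alpha}\int_0^{s(t)}(T(x,t)-T_{{\mathrm m}})\,dx + \frac{1}{\beta}s(t) = \frac{1}{\beta}s_{{\mathrm r}} - \int_t^{\infty}\frac{q^{\star}_{{\mathrm c}}(\tau)}{k}d\tau .
\end{align*}
Since $T \geq T_{{\mathrm m}}$, both terms on the left are non-negative; since $q^{\star}_{{\mathrm c}} \geq 0$, the tail integral on the right is non-negative as well. Therefore $s(t) \leq s_{{\mathrm r}}$ for all $t$, and combined with $\dot s \geq 0$ the monotone convergence theorem yields a limit $s_\infty \defs \lim_{t \to \infty} s(t) \in [s_0, s_{{\mathrm r}}]$.

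Next I would pass to the limit $t \to \infty$ in the displayed identity. The tail $\int_t^{\infty} q^{\star}_{{\mathrm c}}(\tau)/k \,d\tau$ vanishes because $q^{\star}_{{\mathrm c}} \in {\cal L}_1(0,\infty)$ by \eqref{1stlawinteg}, so
\begin{align*}
\lim_{t\to\infty}\frac{1}{\alpha}\int_0^{s(t)}(T(x,t)-T_{{\mathrm m}})\,dx = \frac{1}{\beta}(s_{{\mathrm r}} - s_\infty).
\end{align*}
It remains to show the left-hand limit is zero, forcing $s_\infty = s_{{\mathrm r}}$ and simultaneously $\int(T-T_{{\mathrm m}})dx \to 0$.

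For this last step I would introduce the Lyapunov-type functional $V(t) = \tfrac{1}{2}\|T-T_{{\mathrm m}}\|_{{\cal L}_2}^{2}$ on the time-varying domain $[0,s(t)]$. Differentiating and using the PDE \eqref{eq:stefanPDE}, the interface condition \eqref{eq:stefanBC} (which kills the moving-boundary term), and the Neumann actuation \eqref{eq:stefancontrol}, integration by parts yields
\begin{align*}
\dot V(t) = -\alpha \int_0^{s(t)} T_x(x,t)^{2}\, dx + \frac{\alpha}{k}\bigl(T(0,t)-T_{{\mathrm m}}\bigr)\,q^{\star}_{{\mathrm c}}(t).
\end{align*}
Using $T(s(t),t) = T_{{\mathrm m}}$ and Poincar\'e/Agmon inequalities on $[0,s(t)]$, the boundary term can be absorbed as $\tfrac{\alpha}{2}\int T_x^{2}dx + C\,q^{\star}_{{\mathrm c}}(t)^{2}\,s(t)$, so that $\dot V(t) \leq -c\,V(t) + C\, q^{\star}_{{\mathrm c}}(t)^{2}$ for constants depending on $s_{{\mathrm r}}$. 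Since $q^{\star}_{{\mathrm c}} \in {\cal L}_1 \cap {\cal L}_\infty$ (and thus in ${\cal L}_2$), a Gr\"onwall/comparison argument shows $V(t) \to 0$, and hence $\int_0^{s(t)}(T-T_{{\mathrm m}})dx \to 0$ by Cauchy–Schwarz. Combining with the previous display, $s_\infty = s_{{\mathrm r}}$, which is \eqref{c1}; the decay of $V$ together with uniform bounds on $T_x$ (from parabolic regularity) delivers the pointwise convergence \eqref{c2}.

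The main obstacle is the final decay step: the moving domain and the nonhomogeneous Neumann datum $q^{\star}_{{\mathrm c}}(t)$ prevent a direct spectral argument, so one must be careful that the Poincar\'e-type constant on $[0,s(t)]$ is controlled (which it is, since $s(t) \leq s_{{\mathrm r}}$ is already established) and that the forcing is square-integrable. Monotonicity of $s(t)$ and the uniform upper bound $s(t) \leq s_{{\mathrm r}}$ obtained in the first step are exactly what make this possible.
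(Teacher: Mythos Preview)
The paper does not actually give a proof of Lemma~\ref{rec*}; it merely asserts that it ``can be derived straightforwardly from \eqref{1stlawinteg}.'' What the authors presumably have in mind is exactly your first two steps: integrate the energy balance, use positivity of $T-T_{{\mathrm m}}$ and of $q^{\star}_{{\mathrm c}}$ to get $s(t)\le s_{{\mathrm r}}$, invoke monotonicity for the existence of $s_\infty$, and then let $t\to\infty$ in the identity. The paper stops there and treats the vanishing of the residual internal energy as self-evident; you, by contrast, close that gap with an explicit ${\cal L}_2$-Lyapunov computation and a Gr\"onwall bound. In that sense your argument is strictly more complete than what the paper offers.

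One point worth flagging: in your last step you write ``$q^{\star}_{{\mathrm c}}\in{\cal L}_1\cap{\cal L}_\infty$ (and thus in ${\cal L}_2$)'', but the statement of the lemma only imposes the integral constraint \eqref{1stlawinteg}, i.e.\ $q^{\star}_{{\mathrm c}}\in{\cal L}_1(0,\infty)$ with a prescribed total mass; boundedness is an extra hypothesis you are tacitly adding. It is harmless for every example the paper actually uses (the rectangular pulse \eqref{pulse} is bounded), and some such regularity is genuinely needed to make the forcing term $C\,q^{\star}_{{\mathrm c}}(t)^2$ integrable, so this is a clarification rather than an error --- but you should state it as an assumption rather than slide it in. Similarly, the appeal to ``parabolic regularity'' for the pointwise convergence \eqref{c2} is a bit vague; once $\|T-T_{{\mathrm m}}\|_{{\cal L}_2}\to0$ the conclusion $T\to T_{{\mathrm m}}$ in the sense intended by the paper is immediate, and the extra regularity claim is not really needed.
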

The proof of Lemma \ref{rec*} can be derived straightforwardly from \eqref{1stlawinteg}.
To illustrate the introduced concept of  open-loop ``energy shaping control" action, we define $\Delta E$ as the difference of total energy given by the left-hand side of \eqref{1stlawinteg}. Hence,
\begin{align}\label{1stlawintegopen}
\Delta E= \frac{1}{\beta}(s_{{\mathrm r}}-s_0)-\frac{1}{\alpha}\int_{0}^{s_0} (T_0(x)-T_{{\mathrm m}}) dx.
\end{align}
For instance, the rectangular pulse control law given by
\begin{align}\label{pulse}
q^{\star}_{{\mathrm c}}(t) = 
\left \{
\begin{array}{cc}
\bar{q} &{\rm for}\quad t\in [0, k\Delta E/\bar{q}]\\
0  &{\rm for}\quad t>k\Delta E/\bar{q}
\end{array}
\right \}
\end{align}
satisfies \eqref{1stlawintegopen} for any choice of the boundary heat flux $\bar{q}$ and thereby, ensures the asymptotical stability of  \eqref{eq:stefanPDE}--\eqref{eq:stefanODE} to the setpoint $(T_{\rm m}, s_{{\mathrm r}})$.

\section{State Feedback Control }\label{state-feedback}

It is remarkable that adopting an open-loop control strategy such as the rectangular pulse \eqref{pulse}, does not allow  to improve the convergence speed. Moreover, the physical parameters of the model need to be   known accurately to avoid robustness issues. More generally, the practical implementation of open-loop control laws in engineering process is limited by performance and robustness issues, thus closed-loop control laws have to be designed to deal with such limitations. 

In this section, we focus on the design of closed-loop backstepping control law for the one-phase Stefan Problem in order to  achieve faster exponential convergence to the desired setpoint $(T_{\rm m}, s_{{\mathrm r}})$ while ensuring the robustness of the closed-loop system to the uncertainty of the physical parameters. We recall that from a physical point of view, for any positive heat flux $q_{{\mathrm c}}(t)$,  the irreversibility of the process restrict \emph{a priori} the choice of the desired setpoint $s_{{\mathrm r}}$ to satisfy \eqref{compatibility}.
\begin{figure}[t]
\centering
\includegraphics[width=3.4in]{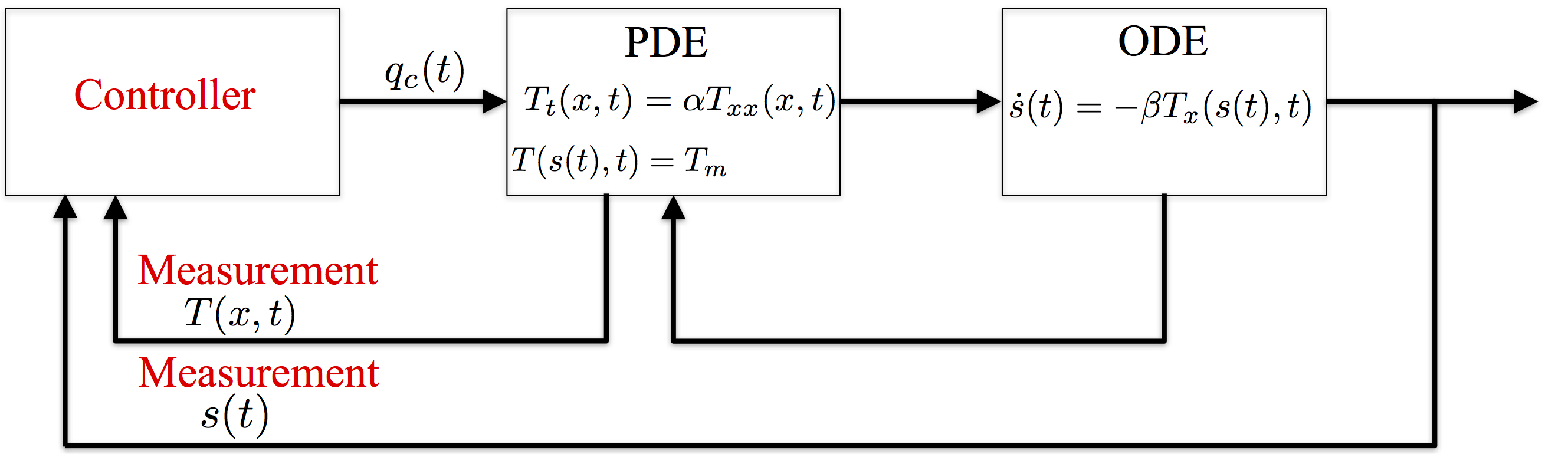}
\caption{Block diagram of the state feedback closed-loop control.}\label{statediag}
\end{figure}

Assuming that the temperature profile in the melt zone $T(x,t)$ and the position of the moving interface $s(t)$ are measured  $\forall x\in [0, s(t)]$ and $\forall t\geq 0$, the following Theorem holds:  
\begin{thm}\label{Theo-1}
Consider a closed-loop system consisting of the plant \eqref{eq:stefanPDE}--\eqref{eq:stefanODE} and the control law
\begin{align}\label{Fullcontrol}
q_{{\mathrm c}}(t)=-ck \left(\frac{1}{\alpha}\int_0^{s(t)} (T(x,t)-T_{{\mathrm m}}) dx +\frac{1 }{\beta}(s(t)-s_{{\mathrm r}})\right), 
\end{align}
where $c>0$ is an arbitrary controller gain. Assume that the initial conditions $(T_0(x), s_0)$ are compatible with the control law and satisfies \eqref{eq:stefanICbound}. Then, for any reference setpoint $s_{{\mathrm r}}$ which satisfies \eqref{compatibility}, the closed-loop system remains the condition of model validity \eqref{valid1} and is exponentially stable in the sense of the norm
\begin{align}\label{H1norm}
||T-T_{{\mathrm m}}||_{{\cal H}_1}^2+(s(t)-s_{{\mathrm r}})^2.
\end{align}
\end{thm}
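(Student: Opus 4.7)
The proof decomposes into three main steps: (i) show the control law preserves physical model validity, so $q_{\mathrm c}(t)>0$ and $\dot s(t)\geq 0$; (ii) introduce a backstepping transformation mapping the plant to an exponentially stable target system; (iii) run a Lyapunov argument on the target and pull back via invertibility of the transformation.

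\emph{Step 1 (model validity via energy conservation).} Write $u(x,t)=T(x,t)-T_{\mathrm m}$, $X(t)=s(t)-s_{\mathrm r}$, and define the reference-shifted energy
\begin{equation*}
E(t):=\tfrac{1}{\alpha}\int_0^{s(t)} u(x,t)\,dx+\tfrac{1}{\beta}X(t),
\end{equation*}
so the control law \eqref{Fullcontrol} reads $q_{\mathrm c}(t)=-ckE(t)$. The identity \eqref{1stlaw} gives $\dot E=q_{\mathrm c}/k=-cE$, whence $E(t)=E(0)e^{-ct}$. Assumption \ref{assumsetpoint} is exactly the condition $E(0)<0$, so $q_{\mathrm c}(t)>0$ for all $t\geq 0$. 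Lemma \ref{lemma1} then preserves \eqref{valid1}, and Corollary \ref{monoinc} yields $\dot s(t)\geq 0$. Combining $u\geq 0$ with $E(t)<0$ also provides the a priori upper bound $s(t)<s_{\mathrm r}$, a crucial fact for the trace estimates in Step~3.

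\emph{Step 2 (backstepping transformation and target system).} Introduce the Volterra change of variable
\begin{equation*}
w(x,t)=u(x,t)-\frac{c}{\alpha}\int_x^{s(t)}(x-y)u(y,t)\,dy-\frac{c}{\beta}(x-s(t))X(t).
\end{equation*}
A direct computation using $u(s(t),t)=0$, $\dot s=-\beta u_x(s,t)$, and the controller, with integration by parts in the Volterra term, yields the target
\begin{align*}
w_t &= \alpha w_{xx}+\tfrac{c}{\beta}\dot s(t)X(t),\quad 0<x<s(t),\\
w_x(0,t)&=0,\qquad w(s(t),t)=0,\\
\dot X(t) &= -cX(t)-\beta w_x(s(t),t).
\end{align*}
The Neumann condition at $x=0$ is exactly the cancellation of the feedback by the kernel; $w(s(t),t)=0$ is built into the transformation; the ODE follows from the trace identity $u_x(s,t)=w_x(s,t)+(c/\beta)X(t)$. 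The Volterra structure admits a bounded inverse, so $||u||_{{\cal H}_1}^2+X^2$ and $||w||_{{\cal H}_1}^2+X^2$ are equivalent norms.

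\emph{Step 3 (Lyapunov analysis and main obstacle).} With $V(t)=\tfrac{1}{2}||w||_{{\cal L}_2}^2+\tfrac{a}{2}||w_x||_{{\cal L}_2}^2+\tfrac{b}{2}X(t)^2$ for $a,b>0$ to be chosen, integration by parts on the moving domain (boundary contributions vanish because of $w(s,t)=0$ and $w_x(0,t)=0$), together with the identity $\alpha w_{xx}(s(t),t)=\dot s(t)^2/\beta$ obtained by differentiating $w(s(t),t)\equiv 0$, produces an estimate of the form
\begin{equation*}
\dot V\leq -\alpha||w_x||_{{\cal L}_2}^2-2a\alpha||w_{xx}||_{{\cal L}_2}^2-bcX^2+(\text{cross and boundary terms}).
\end{equation*}
The indefinite contributions are absorbed using Young's inequality on the cross term $-b\beta X w_x(s,t)$, the trace bound $w_x(s,t)^2\leq s_{\mathrm r}||w_{xx}||_{{\cal L}_2}^2$ (valid from $w_x(0,t)=0$ and $s(t)\leq s_{\mathrm r}$), the sign condition $\dot s(t)X(t)\leq 0$ from Step~1, and Poincar\'e's inequality applied to $w$ (which vanishes at $x=s(t)$). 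For $a,b$ sufficiently small relative to $c/(s_{\mathrm r}\beta^2)$ the dissipation dominates and $\dot V\leq -\lambda V$ holds for an explicit $\lambda>0$, giving $V(t)\leq V(0)e^{-\lambda t}$; the norm equivalence from Step~2 then delivers \eqref{H1norm}. The main obstacle is precisely the PDE--ODE coupling of the target---the non-autonomous source $\tfrac{c}{\beta}\dot s X$ in $w_t$ and the boundary trace $w_x(s,t)$ driving $\dot X$---and the unavoidable moving-domain terms $\dot s w_x(s)^2$ generated when differentiating $||w_x||_{{\cal L}_2}^2$; these are dispatched by exploiting the a priori inequalities $s(t)<s_{\mathrm r}$, $\dot s(t)\geq 0$ and $X(t)\leq 0$ secured in Step~1, together with a judicious weighting of $a,b$.
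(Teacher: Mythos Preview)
Your Steps 1 and 2 are correct and match the paper's argument: the energy identity gives $q_{\mathrm c}(t)=q_{\mathrm c}(0)e^{-ct}>0$, hence model validity and the a priori bound $s_0<s(t)<s_{\mathrm r}$; the backstepping transformation and the resulting target system are exactly the ones the paper derives, and the Volterra structure does give the required norm equivalence.

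The gap is in Step 3. You assert that a suitable weighting of $a,b$ yields $\dot V\le-\lambda V$ directly, but this is precisely what fails. When you differentiate $V$ along the target system, the non-autonomous source $\tfrac{c}{\beta}\dot s(t)X(t)$ produces contributions such as $\tfrac{c}{\beta}\dot s\,X\!\int_0^{s}w\,dx$ (from the $L^2$ part) and, after Young's inequality on the $H^1$ cross term $-\tfrac{ac}{\beta}\dot s\,X\,w_x(s,t)$, a residual $+C\,\dot s(t)X(t)^2$. The sign observation $\dot s X\le 0$ does not help here: $\int_0^{s}w$ has no sign, and $\dot s\,X^2\ge 0$ is the wrong way. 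These terms cannot be absorbed into the fixed dissipation $-\alpha\|w_x\|^2-a\alpha\|w_{xx}\|^2-bcX^2$ by any choice of $a,b$, because $\dot s(t)$ is not a priori bounded in a way that makes them lower order. What one actually obtains is an inequality of the form
\[
\dot V\le -bV+a\,\dot s(t)\,V,
\]
and the paper closes the argument by passing to the weighted functional $W=V e^{-a s(t)}$, for which $\dot W\le -bW$; the bound $s(t)<s_{\mathrm r}$ from Step~1 then converts this back to exponential decay of $V$ with overshoot constant $e^{a s_{\mathrm r}}$. This weighting trick is the missing ingredient in your sketch; once you insert it, the rest of your outline (trace bound via $w_x(0,t)=0$, Poincar\'e, Young on $-b\beta X w_x(s,t)$, and norm equivalence) goes through. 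As a minor remark, the identity $\alpha w_{xx}(s,t)=\dot s^2/\beta$ you derive is correct but plays no role in the estimate.
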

The proof of Theorem \ref{Theo-1} is established through following steps:
\begin{itemize}
\item A backstepping transformation for moving boundary PDEs and the associated inverse transformation  are constructed for a reference error system  (see Section \ref{fbktarget}).
\item Physical constraints that guarantee the positivity of the boundary heat flux under closed-loop control  are derived (see Section \ref{positivness}).
\item The stability analysis of the target system that induces the stability of the original reference error system is performed (see Section \ref{fbkstability}).
\end{itemize}
\subsection{Backstepping Transformation for Moving Boundary PDEs}\label{fbktarget}
\subsubsection{Reference Error System}
Following  a standard procedure, for  a given reference setpoint $(T_{\rm m}, s_{{\mathrm r}})$, we define the reference errors as 
\begin{align}
u(x,t)=T(x,t)-T_{{\mathrm m}}, \quad X(t)=s(t)-s_{{\mathrm r}}, 
\end{align}
respectively. Then, the reference error system associated to the coupled system \eqref{eq:stefanPDE}--\eqref{eq:stefanODE} is written as
\begin{align}\label{eq:referencePDE}
u_{t}(x,t) =& \alpha u_{xx}(x,t), \quad 0\leq x\leq s(t)\\
 \label{eq:referenceBC1}u_x(0,t) =& -\frac{q_{{\mathrm c}}(t)}{k},\\
 \label{eq:referenceBC2}u(s(t),t) =&0,\\
\label{eq:referenceODE}\dot{X}(t) =&-\beta u_x(s(t),t).
\end{align}

\subsubsection{Direct transformation}
Next, we introduce the following backstepping transformation \footnote{The transformation \eqref{eq:DBST} is an extension of the one initially introduced in \cite{krstic09}, and lately employed in  \cite{susto10, tang11},  to moving boundary problems. }  
\begin{align}\label{eq:DBST}
w(x,t)=&u(x,t)-\frac{\beta}{\alpha} \int_{x}^{s(t)} \phi(x-y)u(y,t) dy\nonumber\\
&-\phi(x-s(t)) X(t), \\
\phi(x) =& \frac{c}{\beta}x\label{ker}, 
\end{align}
which transforms system \eqref{eq:referencePDE}--\eqref{eq:referenceODE} into the following ``target system"
\begin{align}\label{eq:tarPDE}
w_t(x,t)=&\alpha w_{xx}(x,t)+\frac{c}{\beta}\dot{s}(t) X(t), \\
\label{eq:tarBC2}
w_x(0,t) =& 0, \\
\label{eq:tarBC1}
w(s(t),t) =& 0, \\
\label{eq:tarODE}
\dot{X}(t)=&-cX(t)-\beta w_x(s(t),t). 
\end{align}
 The derivation of the explicit gain kernel function \eqref{ker}, which enables to map \eqref{eq:referencePDE}--\eqref{eq:referenceODE} into \eqref{eq:tarPDE}--\eqref{eq:tarODE}, is given in Appendix \ref{appDBST}. Basically, the target system \eqref{eq:tarPDE}-\eqref{eq:tarODE} is obtained by taking the derivatives of \eqref {eq:DBST} with respect to $t$ and $x$ respectively along the solution of \eqref{eq:referencePDE}-\eqref{eq:referenceODE} with the control law \eqref{Fullcontrol}.
\subsubsection{Inverse transformation}
The invertibility of the transformation \eqref{eq:DBST}  guarantees that  the original system \eqref{eq:referencePDE}--\eqref{eq:referenceODE} and the target system \eqref{eq:tarPDE}--\eqref{eq:tarODE} have equivalent stability properties. The inverse transformation of \eqref{eq:DBST} is given by 
\begin{align}\label{Inverse}
u(x,t) = w(x,t) +\frac{\beta}{\alpha} \int_x^{s(t)} \psi(x-y)w(y,t) dy\notag\\
+\psi(x-s(t))X(t),
\end{align}
with an explicit gain kernel function
\begin{align}\label{invback}
\psi(x) = \frac{c}{\beta} \sqrt{\frac{\alpha}{c}} {\rm sin} \left(\sqrt{\frac{c}{\alpha}} x \right). 
\end{align}
As for the direct transformation, the derivation of the inverse tranformation \eqref{Inverse} is  detailed in Appendix \ref{appIBST}. Therefore, there exists a unique inverse transformation and  the stability properties between $(u,X)$-system and $(w,X)$-system are identical. 
\subsection{Physical Constraints}\label{positivness}
As stated in Remark \ref{assumption} and Lemma \ref{lemma1}, a strictly positive heat flux  $q_{{\mathrm c}}(t)$ is required due to the fact that a negative heat flux may lead to a freezing process, which violates the condition of the model validity \eqref{valid1}. To achieve the control objective $s(t) \to s_{{\mathrm r}}$, the aforementioned constraint for a melting Stefan problem does not allow the overshoot beyond the reference setpoint  $s_{{\mathrm r}}$ due to monotonically increasing property of the moving interface dynamics stated in \eqref{valid2}. In this section, we establish that the state feedback control law \eqref{Fullcontrol} guarantees the following ``physical constraints" 
\begin{align}\label{physical constraints}
q_{{\mathrm c}}(t)>&0, \quad  \forall t>0 \\  
\label{constraint2}s(t)<&s_{{\mathrm r}}, \quad  \forall t>0
\end{align}

\begin{prop}
Under Assumption \ref{assumsetpoint}, the closed-loop responses of the plant \eqref{eq:stefanPDE}--\eqref{eq:stefanODE} with the control law \eqref{Fullcontrol} satisfies the physical constraints \eqref{physical constraints} and \eqref{constraint2}, and hence the required conditions for the model validity, namely, \eqref{valid1} and \eqref{valid2} hold. 
\end{prop}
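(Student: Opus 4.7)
The plan is to reduce the problem to a scalar linear ODE for the total-energy-like quantity that appears inside the control law, and then to read off both physical constraints from its sign.

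Define
\begin{align*}
V(t)\defs \frac{1}{\alpha}\int_0^{s(t)}(T(x,t)-T_{{\mathrm m}})\,dx+\frac{1}{\beta}(s(t)-s_{{\mathrm r}}),
\end{align*}
so that the control law \eqref{Fullcontrol} is precisely $q_{{\mathrm c}}(t)=-ckV(t)$. First I would differentiate $V$ in time. The derivative of the first term differs from the left-hand side of the energy identity \eqref{1stlaw} only by the constant shift $-s_{{\mathrm r}}/\beta$, whose derivative vanishes, so \eqref{1stlaw} yields $\dot V(t)=q_{{\mathrm c}}(t)/k=-cV(t)$. Hence $V(t)=V(0)e^{-ct}$, which is the key observation: $V$ keeps its initial sign for all time and decays exponentially.

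Next I would evaluate the sign of $V(0)$. By Assumption \ref{assumsetpoint}, the compatibility condition \eqref{compatibility} gives exactly $V(0)<0$. Combined with the closed-form $V(t)=V(0)e^{-ct}$, this shows $V(t)<0$ for all $t\ge 0$, and therefore $q_{{\mathrm c}}(t)=-ckV(t)>0$ for all $t\ge 0$, establishing \eqref{physical constraints}. Because $q_{{\mathrm c}}(t)>0$ on every finite interval, Lemma \ref{lemma1} applies and the model-validity condition \eqref{valid1} holds, i.e.\ $T(x,t)\ge T_{{\mathrm m}}$ on $[0,s(t)]$, which in turn forces $\int_0^{s(t)}(T(x,t)-T_{{\mathrm m}})dx\ge 0$. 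Plugging this nonnegativity back into $V(t)<0$ gives $\frac{1}{\beta}(s(t)-s_{{\mathrm r}})<0$, i.e.\ $s(t)<s_{{\mathrm r}}$, which is \eqref{constraint2}. Corollary \ref{monoinc} then yields \eqref{valid2}.

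The only delicate point worth flagging is the logical ordering: the bound $T\ge T_{{\mathrm m}}$ cannot be used at the step where $q_{{\mathrm c}}>0$ is proved, since Lemma \ref{lemma1} itself requires $q_{{\mathrm c}}>0$. The argument avoids circularity because the identity $\dot V=-cV$ follows purely from integration by parts of the PDE (the energy balance \eqref{1stlaw}) and does not rely on any sign information on $T-T_{{\mathrm m}}$; one first gets $q_{{\mathrm c}}>0$ from the ODE and Assumption \ref{assumsetpoint}, then invokes Lemma \ref{lemma1} to obtain $T\ge T_{{\mathrm m}}$, and only afterwards deduces $s(t)<s_{{\mathrm r}}$. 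This ordering is the main, though mild, subtlety in an otherwise short computation.
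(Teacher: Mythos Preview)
Your proof is correct and follows essentially the same route as the paper: defining $V(t)$ with $q_{{\mathrm c}}(t)=-ckV(t)$ and deriving $\dot V=-cV$ is exactly the paper's computation $\dot q_{{\mathrm c}}=-cq_{{\mathrm c}}$ written in terms of the bracketed quantity, and the subsequent chain (Assumption~\ref{assumsetpoint} $\Rightarrow q_{{\mathrm c}}>0$ $\Rightarrow$ Lemma~\ref{lemma1} $\Rightarrow$ $s(t)<s_{{\mathrm r}}$) matches the paper's logical order. Your explicit remark on avoiding circularity is a welcome clarification that the paper leaves implicit.
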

\begin{proof}
By taking the time derivative of \eqref{Fullcontrol} along the solution of \eqref{eq:stefanPDE}--\eqref{eq:stefanODE}, we have
\begin{align}
\dot{q}_{{\mathrm c}}(t)=-cq_{{\mathrm c}}(t)\label{solq}
\end{align}
The differential equation \eqref{solq} leads to the dynamics of the control law to be exponentially decaying function, i.e.  
\begin{align}\label{expcont}
q_{{\mathrm c}}(t)=q_{{\mathrm c}}(0)e^{-ct}. 
\end{align}
Since the setpoint restriction \eqref{compatibility} implies $q_{{\mathrm c}}(0)>0$, the dynamics of the state-feedback control law defined in  \eqref{expcont} verifies condition \eqref{physical constraints}. Then by Lemma 1 and Corollary \ref{monoinc}, the model validity \eqref{valid1} and increasing property on interface \eqref{valid2} are satisfied. Applying \eqref{physical constraints} and \eqref{valid1} to the control law \eqref{Fullcontrol}, the following inequality is obtained
\begin{align}
s(t)<s_{{\mathrm r}},\quad \forall t>0\label{eq:error}.
\end{align}
In addition,  inequality \eqref{valid2}  leads to $s_0< s(t)$. Thus, we have
 \begin{align}\label{position}
s_0<s(t)<s_{{\mathrm r}}, \quad \forall t>0.  
\end{align}
\end{proof}
In the next section,  inequalities  \eqref{valid2} and \eqref{position}  are used to establish the Lyapunov stability of the target system   \eqref{eq:tarPDE}-\eqref{eq:tarODE}. 

\subsection{Stability Analysis}\label{fbkstability}
In the following we establish the exponential stability of  the closed-loop control system in ${\cal H}_1$-norm of the temperature and the moving boundary  based on the analysis of the associated target system \eqref{eq:tarPDE}--\eqref{eq:tarODE}. We consider a functional
\begin{align}\label{eq:lyap1}
V = &\frac{1}{2}||w||_{{\cal H}_1}^2+\frac{p}{2}X(t)^2, 
\end{align}
where $p>0$  is a positive parameter to be determined. Taking the derivative of \eqref{eq:lyap1} along the solution of the target system \eqref{eq:tarPDE}-\eqref{eq:tarODE} and applying Young's, Cauchy-Schwartz, Pointcare's, Agmon's inequalities, with the help of \eqref{valid2} and \eqref{position}, we have
\begin{align}\label{eq:lyapineq}
\dot{V}\leq& -b V  + a\dot{s}(t) V, 
\end{align}
where $a = \max \left\{s_{{\mathrm r}}^2, \frac{8 s_{{\mathrm r}}c}{\alpha}\right\}$, $b =\min\left\{ \frac{\alpha}{4s_{{\mathrm r}}^{2}}, c \right\}$. 
 The detailed derivation of \eqref{eq:lyapineq} is given in Appendix \ref{stabilitytarget}.

However, the second term of the right hand side of \eqref{eq:lyapineq} does not enable to directly conclude the exponential stability. To deal with it, we introduce a new Lyapunov function $W$ such that 
\begin{align}\label{eq:lyap}
W=Ve^{-as(t)}.
\end{align}
The time derivative of \eqref{eq:lyap}  is written as 
\begin{align}\label{dotV0}
\dot{W} = \left(\dot{V} -a\dot{s}(t)V\right)e^{-as(t)},
\end{align}
and the following bound  can be deduced for $\dot{W}$ using  \eqref{eq:lyapineq}
\begin{align}\label{dotV}
\dot{W}  \leq -bW.
\end{align}
Hence, 
 \begin{align}
 W(t) \leq W(0) e^{-bt}, 
 \end{align}
  and using \eqref{eq:lyap} and \eqref{position}, we arrive at 
\begin{align}
V (t)\leq e^{as_{{\mathrm r}}} V(0)  e^{-bt}.
\end{align}
From the definition of $V$ in \eqref{eq:lyap1} the following holds
\begin{align}\label{expstabilityw}
||w||_{{\cal H}_1}^2+pX(t)^2 \leq e^{as_{{\mathrm r}}} \left(||w_0||_{{\cal H}_1}^2+pX(0)^2\right) e^{-bt}. 
\end{align}
Finally,  the direct transformation \eqref{eq:DBST}--\eqref{ker} and its associated  inverse transformation \eqref{Inverse}--\eqref{invback} combined with Young's and Cauchy-Schwarz inequality, enable one to state the existence of a positive constant $D>0$ such that 
\begin{align}\label{expstabilityu}
||u||_{{\cal H}_1}^2+X(t)^2 \leq D \left(||u_0||_{{\cal H}_1}^2+X(0)^2\right) e^{-bt},
\end{align}
which completes the proof of Theorem \ref{Theo-1}. The detailed derivation from \eqref{expstabilityw} to \eqref{expstabilityu} is described in Appendix \ref{stabilityoriginal}. 

Next, we show the robustness of the closed-loop system

\subsection{Robustness to Parameters' Uncertainty}\label{sec:robust}
In this section, we investigate robustness of the backstepping control design \eqref{Fullcontrol} to perturbations on the plant's parameters $\alpha$ and $\beta$. Physically, these perturbations are caused by the uncertainty of the thermal diffusivity and the latent heat of fusion. Hence, we consider the closed-loop system 
\begin{align}\label{eq:robustPDE}
T_t(x,t)=&\alpha (1+\varepsilon_1) T_{xx}(x,t), \hspace{1mm} 0\leq x\leq s(t), \\
\label{eq:robustcontrol}
-k T_x(0,t)=&q_{{\mathrm c}}(t), \\ 
\label{eq:robustBC}T(s(t),t)=&T_{{\mathrm m}},\\
\label{eq:robustODE}
 \dot{s}(t)=&-\beta(1+\varepsilon_2) T_x(s(t),t), 
\end{align}
with the control law \eqref{Fullcontrol}, where $\varepsilon_1$ and $\varepsilon_2$ are perturbation parameters such that $\varepsilon_1>-1$ and $\varepsilon_2>-1$. Defining the   vector $\varepsilon \in {\mathbb R}^2$  where $\varepsilon = (\varepsilon_1, \varepsilon_2)$, the following theorem is stated. 
\begin{thm}\label{robust-thm}
Consider a closed-loop system  \eqref{eq:robustPDE}--\eqref{eq:robustODE} and the control law \eqref{Fullcontrol} under Assumption \ref{assumini} and \ref{assumsetpoint}. Then, for any pair of perturbations such that $\varepsilon \in S_{\varepsilon}$ where 
\begin{align}\label{Se}
S_{\varepsilon} :=& \left\{ (\varepsilon_1, \varepsilon_2) | \left(1-G(c)\right) \varepsilon_1 - G(c)  \leq \varepsilon_2 \leq \varepsilon_1 \right\}, \\
G(c) :=& \left(\frac{3}{10}\right)^{1/4}\frac{ \alpha}{8 s_{{\mathrm r}}^2 c }, 
\end{align}
 the closed-loop system is exponentially stable in the sense of the norm \eqref{H1norm}.
\end{thm}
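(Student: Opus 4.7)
The plan is to mirror the proof of Theorem \ref{Theo-1}, tracking the perturbation parameters $\varepsilon_1,\varepsilon_2$ through every step. First I would compute the closed-loop dynamics of the boundary flux: differentiating \eqref{Fullcontrol} along the perturbed plant \eqref{eq:robustPDE}--\eqref{eq:robustODE}, using $T(s(t),t)=T_{\mathrm m}$ and the boundary condition \eqref{eq:robustcontrol}, one obtains a relation of the form
\begin{equation*}
\dot{q}_{\mathrm c}(t) = -c(1+\varepsilon_1)q_{\mathrm c}(t) + \frac{ck(\varepsilon_1-\varepsilon_2)}{\beta(1+\varepsilon_2)}\dot{s}(t),
\end{equation*}
which collapses to $\dot q_{\mathrm c}=-cq_{\mathrm c}$ when $\varepsilon_1=\varepsilon_2$. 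A bootstrap argument then delivers the positivity of $q_{\mathrm c}$ under the upper-bound part $\varepsilon_2\leq\varepsilon_1$ of \eqref{Se}: $q_{\mathrm c}(0)>0$ follows from Assumption \ref{assumsetpoint}, and if $t_0$ were the first zero then Lemma \ref{lemma1} would give $\dot s(t_0)\geq 0$, forcing $\dot q_{\mathrm c}(t_0)\geq 0$ and contradicting the crossing. Consequently the physical constraints \eqref{physical constraints}--\eqref{constraint2} are preserved, and in particular $s(t)\in(s_0,s_{\mathrm r})$ for all $t>0$.

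Next I would apply the nominal backstepping transformation \eqref{eq:DBST}--\eqref{ker} to the perturbed reference error system. Because the kernel $\phi$ was designed to cancel the nominal diffusion and Stefan coefficients, the resulting target system inherits \eqref{eq:tarPDE}--\eqref{eq:tarODE} plus a collection of mismatch terms: an extra $\alpha\varepsilon_1 w_{xx}$ in the interior, a $(1+\varepsilon_2)$-weighting of the boundary flux feeding $\dot X$, and further integral terms of order $\varepsilon_1,\varepsilon_2$ coming from the transformation acting on the perturbed $u_t$ equation. Repeating the Lyapunov calculation of Appendix \ref{stabilitytarget} on $V=\tfrac{1}{2}\|w\|_{\mathcal H_1}^2+\tfrac{p}{2}X^2$ and absorbing each perturbation-induced cross term via Young's, Poincar\'e's and Agmon's inequalities, I expect an estimate of the form
\begin{equation*}
\dot V \leq -\tilde b(\varepsilon)\, V + \tilde a(\varepsilon)\,\dot s(t)\,V,
\end{equation*}
where $\tilde b(\varepsilon)>0$ precisely when $(1-G(c))\varepsilon_1-G(c)\leq \varepsilon_2$. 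The final step is identical to the nominal case: the multiplicative $\dot s\,V$ term is killed by switching to $W=Ve^{-\tilde a(\varepsilon)s(t)}$, which remains bounded since $s(t)<s_{\mathrm r}$, and the invertibility of \eqref{eq:DBST}--\eqref{invback} transfers the exponential decay of $V$ to the $\mathcal H_1$-norm \eqref{H1norm} of $(u,X)$.

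The main obstacle will be the bookkeeping in the Lyapunov estimate: carefully tracking every $\varepsilon_i$-dependent cross term and choosing the Young splittings so as to saturate the stability condition at the boundary of $S_\varepsilon$, rather than at some looser sufficient set. The presence of the exponent $1/4$ inside $G(c)=(3/10)^{1/4}\alpha/(8s_{\mathrm r}^2 c)$ strongly suggests a balanced four-way Young's split on a quartic combination of boundary/mismatch contributions, most plausibly on the $w_x(s(t),t)$-type terms arising from $\tfrac{d}{dt}\|w_x\|_{\mathcal L_2}^2$; producing this exact constant rather than a weaker one is the delicate quantitative part of the proof.
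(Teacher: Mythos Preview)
Your outline matches the paper's proof almost exactly: same transformation \eqref{eq:DBST}, same contradiction/comparison-principle argument for $q_{\mathrm c}>0$ under $\varepsilon_2\le\varepsilon_1$, same $W=Ve^{-as(t)}$ trick, and the same transfer back via invertibility. Two specific places where the paper differs from what you wrote and that you will need to reproduce the constant $G(c)$: (i) the mismatch in the target PDE is not a collection of integral terms but the single local source $\frac{c}{\beta}\frac{\varepsilon_1-\varepsilon_2}{1+\varepsilon_2}\,\dot s(t)\,(x-s(t))$ appearing alongside $\alpha(1+\varepsilon_1)w_{xx}$ and the $(1+\varepsilon_2)$-weighted ODE, and (ii) the paper does \emph{not} reuse the equal-weight $\mathcal H_1$ functional from the nominal case but instead introduces a perturbation-dependent weight on the $\mathcal L_2$ piece,
\[
V_{\varepsilon}=\frac{d}{2}\|w\|_{\mathcal L_2}^2+\frac{1}{2}\|w_x\|_{\mathcal L_2}^2+\frac{p}{2}X^2,\qquad d=\frac{160\,s_{\mathrm r}^2 c^2(\varepsilon_1-\varepsilon_2)^2}{\alpha^2(1+\varepsilon_1)^2},\quad p=\frac{c\alpha(1+\varepsilon_1)}{8s_{\mathrm r}(1+\varepsilon_2)\beta^2}.
\]
This extra free parameter $d$ is precisely what lets the Young splittings saturate at $\frac{\varepsilon_1-\varepsilon_2}{(1+\varepsilon_1)G(c)}$ in the $\|w_{xx}\|$ and $X^2$ coefficients (cf.\ the paper's estimate with factors $4-(\cdot)^4$ and $2-(\cdot)^4$), producing the $(3/10)^{1/4}$ you flagged; with the fixed $d=1$ functional you proposed you would only recover a weaker admissible set than $S_\varepsilon$.
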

\begin{proof}
Using the backstepping transformation \eqref{eq:DBST}, the target system  associated to \eqref{eq:robustPDE}--\eqref{eq:robustODE} is defined as  follows
\begin{align}\label{eq:tarrobustPDE}
w_t(x,t)=& \alpha (1+\varepsilon_1)w_{xx}(x,t)+\frac{c}{\beta}\dot{s}(t) X(t)\notag\\
&+ \frac{c }{\beta }\frac{\varepsilon_1-\varepsilon_2}{1+\varepsilon_2} \dot{s}(t) (x-s(t)), \\
\label{eq:tarrobustBC1}w(s(t),t) =& 0, \\
\label{eq:robusDBSu}w_x(0,t) =& 0, \\
\label{eq:tarrobustODE}\dot{X}(t)=&-c(1+\varepsilon_2)X(t)-\beta(1+\varepsilon_2) w_x(s(t),t). 
\end{align}
Next we prove that the control law  \eqref{Fullcontrol}   applied to the perturbed system  \eqref{eq:robustPDE}--\eqref{eq:robustODE}, satisfies the  physical constraints \eqref{physical constraints} and \eqref{constraint2}.
Taking time derivative of \eqref{Fullcontrol} along with \eqref{eq:robustPDE}--\eqref{eq:robustODE}, we arrive at
\begin{align}\label{pertcont}
\dot{q}_{{\mathrm c}}(t)=-c (1+\varepsilon_1)  q_{{\mathrm c}}(t)-c k \left(\varepsilon_1-\varepsilon_2\right)  u_x(s(t),t). 
\end{align}
The positivity of \eqref{Fullcontrol}  is shown using a contradiction argument. Assume that there exists $ t_1>0$ such that $q_{{\mathrm c}}(t)>0$ for $\forall t \in(0,t_1)$ and $q_{{\mathrm c}}(t_1) = 0$. Then, Lemma \ref{lemma1} and Hopf's Lemma leads to $u_x(s(t),t)<0$ for $\forall t \in(0,t_1)$. Since $\varepsilon \in S_{\varepsilon}$ with \eqref{Se}, \eqref{pertcont} implies that
\begin{align}\label{pertineq}
\dot{q}_{{\mathrm c}}(t)> -c (1+\varepsilon_1)  q_{{\mathrm c}}(t), \quad \forall t\in(0,t_1), 
\end{align}
for all $\varepsilon \in S_{\varepsilon}$. Using comparison principle, \eqref{pertineq} and Assumption \ref{assumsetpoint} leads to $q_{{\mathrm c}}(t_1) > q_{{\mathrm c}}(0) e^{-c(1+\varepsilon_1) t_1} >0$. Thus $q_{{\mathrm c}}(t_1)\neq 0$   which is in contradiction with  the assumption $q_{{\mathrm c}}(t_1) = 0$. Consequently,  \eqref{physical constraints} holds by this contradiction argument. Accordingly, \eqref{constraint2} is established using the positivity of the heat flux $q_c(t)$ defined in  \eqref{physical constraints} and the control law \eqref{Fullcontrol}. 

Now, consider a functional
\begin{align}\label{robfuct} 
V_{\ep} = \frac{d}{2} ||w||_{{\cal L}_1}^2 +  \frac{1}{2} ||w_{x}||_{{\cal L}_1}^2 + \frac{p}{2} X(t)^2. 
\end{align}
where the parameters $d$ and $p$ are chosen to be $p = \frac{c\alpha (1+\ep_1)}{8 s_r (1+\ep_2)\beta^2}$, $d =  \frac{160 s_r^2 c^2 (\ep_1-\ep_2)^2}{\alpha^2 (1+\ep_1)^2}$. Taking the time derivative of  \eqref{robfuct} along the solution of \eqref{eq:tarrobustPDE}--\eqref{eq:tarrobustODE}, and applying the aforementioned inequalities in the derivation of \eqref{eq:lyapineq}, we get
\begin{align}\label{robustestimate}
\dot{V}_{\ep} \leq& -d \left(\frac{\alpha (1+\varepsilon_{1})}{4}\right)  ||w_{x}||_{{\cal L}_2}^2\notag\\
&-\frac{\alpha (1+\varepsilon_{1})}{12 }  \left(4 - \left(\frac{\varepsilon_{1}-\varepsilon_{2} }{G(c) (1+\varepsilon_{1})} \right)^4 \right) ||w_{xx}||_{{\cal L}_2}^2 \notag\\
&- \frac{c^2}{\beta^2}\frac{\alpha (1+\varepsilon_{1})}{64 s_{{\mathrm r}}} \left(2 - \left(\frac{\varepsilon_{1}-\varepsilon_{2} }{G(c) (1+\varepsilon_{1})} \right)^4\right)X(t)^2\notag\\
&+\dot{s}(t) \left\{ d^2 s_{{\mathrm r}}^2 || w||_{{\cal L}_2}^2+\frac{ c^2}{\beta ^2}  X(t)^2\right\}. 
\end{align}
From  \eqref{robustestimate} we deduce  that for all $\varepsilon \in S_{\varepsilon}$, there exists positive parameters $a$ and $b$  such that
\begin{align}
\dot{V}_{\ep}\leq& -b V_{\ep} + a\dot{s}(t) V_{\ep}. 
\end{align}
The exponential stability of the target system \eqref{eq:tarrobustPDE}-\eqref{eq:tarrobustODE} can be straightforwardly established following the steps in deriving the proof of exponential  stability of  \eqref{eq:lyap}-\eqref{expstabilityw}, which completes the proof of Theorem \ref{robust-thm}. 
\end{proof}

\section{Sate Estimation Design}\label{sec:estimation}

\subsection{ Problem Statement }\label{statement}
Generally, the implementation of a full-state feedback controller  is rarely achievable  due to the unavailability of full state measurements. The computation of the controller \eqref{Fullcontrol} requires  a full measurement  of the distributed   temperature profile $T(x,t)$ along the domain $[0, s(t)]$  and the moving interface position $s(t)$ which relatively limits its   practical relevance. Here, we extended the full-state feedback results considering  moving interface position $s(t)$ as the only available measurement and deriving an estimator of the temperature profile based on this only available measurement  $Y(t)=s(t)$. Denoting the estimates of the temperature $\hat T(x,t)$, the following theorem holds:
\begin{thm}\label {observer}
Consider the plant \eqref{eq:stefanPDE}--\eqref{eq:stefanODE} with the measurement $Y(t)=s(t)$ and the following observer 
\begin{align}
 \label{observerPDE}\hat{T}_t(x,t)=&\alpha \hat{T}_{xx}(x,t) \nonumber\\
 & - p_1(x,s(t))\left(\frac{\dot{Y}(t)}{\beta} + \hat{T}_x(s(t),t)\right),  \\
 \label{observerBC2}-k\hat{T}_x(0,t)=&q_{{\mathrm c}}(t), \\
\label{observerBC1}\hat{T}(s(t),t)=&T_{{\mathrm m}}, 
\end{align}
where $x\in[0,s(t)]$, and the observer gain $p_1(x,s(t))$ is 
\begin{align}\label{P1x}
p_1(x,s(t)) =  -\lambda s(t)\frac{I_1\left(\sqrt{\frac{\lambda}{\alpha}\left(s(t)^2-x^2\right)}\right)}{\sqrt{\frac{\lambda}{\alpha} \left(s(t)^2-x^2\right)}}
\end{align}
with a gain parameter $\lambda>0 $. Assume that the two physical constraints \eqref{physical constraints} and \eqref{constraint2} are satisfied. Then, for all  $\lambda >0$, the observer error system is exponentially stable in the sense of the norm 
\begin{align}
||T-\hat{T}||_{{\cal H}_1}^2 . 
\end{align}
\end{thm}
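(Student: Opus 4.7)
The plan is to mirror the full-state-feedback argument from Theorem \ref{Theo-1}, but working directly on the observer-error variable $\tilde T \defs T - \hat T$ and constructing a Volterra-type backstepping transformation adapted to the moving domain. First I would derive the error PDE. Subtracting \eqref{observerPDE}--\eqref{observerBC1} from \eqref{eq:stefanPDE}--\eqref{eq:stefanBC} and using the fact that, by \eqref{eq:stefanODE}, $\dot Y(t)/\beta = \dot s(t)/\beta = -T_x(s(t),t)$, the bracket $\dot Y(t)/\beta + \hat T_x(s(t),t)$ simplifies to $-\tilde T_x(s(t),t)$. The common actuation $q_c(t)$ cancels at $x=0$, so the error system reads
\begin{align}
\tilde T_t(x,t) &= \alpha \tilde T_{xx}(x,t) + p_1(x,s(t))\,\tilde T_x(s(t),t), \notag\\
\tilde T_x(0,t) &= 0,\qquad \tilde T(s(t),t)=0, \notag
\end{align}
a non-autonomous parabolic PDE on a moving domain forced nonlocally by its own boundary gradient, which is the standard setting for a backstepping observer.

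Next I would introduce the direct Volterra backstepping transformation
\begin{align}
\tilde T(x,t) = \tilde w(x,t) - \int_{x}^{s(t)} P(x,y)\,\tilde w(y,t)\,dy, \notag
\end{align}
and require that the image $\tilde w$ satisfy the exponentially stable target system
\begin{align}
\tilde w_t(x,t) &= \alpha \tilde w_{xx}(x,t) - \lambda \tilde w(x,t), \notag\\
\tilde w_x(0,t) &= 0,\qquad \tilde w(s(t),t)=0. \notag
\end{align}
Differentiating the transformation in $t$ and $x$ along the two PDEs, using Leibniz's rule (the boundary-motion contributions vanish at $y=s(t)$ because $\tilde w(s(t),t)=0$) and matching coefficients, I would obtain the Klein-Gordon-type kernel equation $P_{yy}-P_{xx} = (\lambda/\alpha)P$ on the triangle $\{0\le x\le y\le s(t)\}$ with $P_x(0,y)=0$ and the diagonal condition $P(x,x)=-\lambda(s(t)-x)/(2\alpha)$-style closure that forces the observer gain identification $p_1(x,s(t)) = -\alpha P(x,s(t))$. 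Solving this PDE explicitly by the method of successive approximations in the shifted variable $\eta = s(t)^2-x^2$ (as in the fixed-domain backstepping literature for reaction-diffusion observers) yields the modified Bessel series whose closed form is exactly \eqref{P1x}, and the same ansatz delivers a bounded inverse kernel $Q(x,y)$, so the transformation and its inverse are continuous in both $\mathcal H_1$ directions.

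The stability analysis of the target system would follow the template of Section \ref{fbkstability}. I would set
\begin{align}
V_{\rm o} = \tfrac{1}{2}\lVert \tilde w\rVert_{\mathcal H_1}^2, \notag
\end{align}
differentiate along the target, integrate by parts, and use $\tilde w_x(0,t)=0$ and $\tilde w(s(t),t)=0$. The reaction term $-\lambda \tilde w$ together with the Poincar\'e inequality on $[0,s(t)]\subset[0,s_{\mathrm r}]$ (which holds by the physical constraint \eqref{constraint2} assumed in the hypotheses) provides dissipation in both the $\lVert\tilde w\rVert_{\mathcal L_2}^2$ and $\lVert\tilde w_x\rVert_{\mathcal L_2}^2$ components; the moving-boundary Leibniz contribution is of the form $\dot s(t)\,\bigl(\tfrac12\tilde w_x(s(t),t)^2 + \cdots\bigr)$, which, by Agmon's inequality combined with $\dot s(t)\ge 0$ from \eqref{valid2}, is absorbed as $\bar a\dot s(t)V_{\rm o}$. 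This yields
\begin{align}
\dot V_{\rm o}\le -\bar b\,V_{\rm o}+\bar a\,\dot s(t)\,V_{\rm o} \notag
\end{align}
with constants $\bar a,\bar b>0$ depending on $\lambda,\alpha,s_{\mathrm r}$. Defining $W_{\rm o}=V_{\rm o}e^{-\bar a s(t)}$ exactly as in \eqref{eq:lyap} then gives $\dot W_{\rm o}\le -\bar b W_{\rm o}$, hence $V_{\rm o}(t)\le e^{\bar a s_{\mathrm r}}V_{\rm o}(0)e^{-\bar b t}$, and transferring through the bounded direct/inverse transformations yields exponential decay of $\lVert\tilde T\rVert_{\mathcal H_1}^2$.

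The main obstacle I expect is closing the $\mathcal H_1$ bound at the moving right endpoint: since $\tilde w(s(t),t)=0$ does not directly control $\tilde w_x(s(t),t)$, the boundary term produced by differentiating $\lVert\tilde w_x\rVert_{\mathcal L_2}^2$ must be handled carefully through Agmon's inequality, and the constants $\bar a,\bar b$ have to be tuned so that the dissipation survives for every $\lambda>0$ (which is what the theorem claims). The explicit construction of the kernel and its Bessel closed form is computationally heavy but routine once the kernel PDE is in hand; the genuine work is the Lyapunov bookkeeping on the moving domain.
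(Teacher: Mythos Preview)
Your approach is essentially the paper's: derive the error PDE, apply the Volterra backstepping transformation to the damped-heat target $\tilde w_t=\alpha\tilde w_{xx}-\lambda\tilde w$ with $\tilde w_x(0,t)=0$, $\tilde w(s(t),t)=0$, identify $p_1$ from the kernel evaluated at $y=s(t)$, and run an $\mathcal H_1$ Lyapunov argument on $\tilde w$. Two remarks. First, a sign slip: subtracting the observer from the plant gives $\tilde T_t=\alpha\tilde T_{xx}-p_1(x,s(t))\tilde T_x(s(t),t)$, not $+p_1$; this is harmless for the structure but worth fixing. Second, and more substantively, you over-engineer the Lyapunov step. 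When you differentiate $V_{\rm o}=\tfrac12\|\tilde w\|_{\mathcal H_1}^2$ carefully, the sole moving-boundary contribution is $-\tfrac{\dot s(t)}{2}\tilde w_x(s(t),t)^2$, which is \emph{nonpositive} because $\dot s(t)\ge 0$ under the standing physical constraints. It can therefore be dropped outright, yielding directly $\dot V_{\rm o}\le -\bigl(\tfrac{\alpha}{4s_{\mathrm r}^2}+\lambda\bigr)V_{\rm o}$ after Poincar\'e; no Agmon bound, no $e^{-\bar a s(t)}$ weighting, and no tuning of $\bar a,\bar b$ are needed. This is exactly how the paper closes the estimate, and it is why the result holds for every $\lambda>0$ with a clean decay rate.
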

Theorem \ref{observer} is proved later in  Section \ref{sec:estimation}. 

\subsection{Observer esign and Backstepping Transformation  }\label{obsvtarget}

\subsubsection{Observer Design and Observer Error System} 

For the reference error  system, namely, the $u$-system  \eqref{eq:referencePDE}--\eqref{eq:referenceODE},  we  consider the following observer:
\begin{align}
\hat{u}_t(x,t)=&\alpha  \hat{u}_{xx}(x,t)\nonumber\\&+p_1(x,s(t))\left(-\frac{1}{\beta}\dot{Y}(t)-\hat{u}_x(s(t),t)\right), \label{obsfirst}\\
\hat{u}(s(t),t)=&0, \\
\label{obslast}\hat{u}_x(0,t)=&-\frac{q_{{\mathrm c}}(t)}{k}, 
\end{align}
where $p_1(x,s(t))$ is the observer gain to be determined. Defining estimation error variable of $u$-system as 
\begin{align}\label{estimateerror}
\tilde{u}(x,t)=u(x,t)-\hat{u}(x,t).
\end{align}
Combining \eqref{eq:referencePDE}--\eqref{eq:referenceODE} with \eqref{obsfirst}--\eqref{obslast},  the $\tilde{u}$-system is written as
\begin{align}\label {eq:errorPDE}
\tilde{u}_t(x,t)=& \alpha \tilde{u}_{xx}(x,t)-p_1(x,s(t))\tilde{u}_x(s(t),t),\\
\label{eq:errorBC2}\tilde{u}(s(t),t)=&0, \quad \tilde{u}_x(0,t)=0.
\end{align}

\subsubsection{Direct transformation}
As for the full state feedback case,   the following backstepping transformation for moving boundary PDEs, whose kernel function is the observer gains 
\begin{align}\label{eq:errorDBST}
\tilde{u}(x,t)=&\tilde{w}(x,t)+\int_x^{s(t)} P_{1}(x,y)\tilde{w}(y,t) dy,
\end{align}
is constructed  to transform the following exponentially stable target system 
\begin{align}\label{wtilde1}
\tilde{w}_t(x,t)=& \alpha \tilde{w}_{xx}(x,t)- \lambda \tilde{w}(x,t),\\
\tilde{w}(s(t),t)=&0, \quad \tilde{w}_x (0,t)=0.\label{wtilde3}
\end{align}
into $\tilde{u}$-system defined  in \eqref{eq:errorPDE}-\eqref{eq:errorBC2}. Taking the derivative of \eqref {eq:errorDBST} with respect to $t$ and $x$ along the solution of \eqref{wtilde1}-\eqref{wtilde3} respectively, the solution of the gain kernel and the observer gain are given by
\begin{align}
\label{p1gain}P_{1}(x,y) =& \frac{\lambda}{\alpha} y\frac{I_1\left(\sqrt{\frac{\lambda}{\alpha} (y^2-x^2)}\right)}{\sqrt{\frac{\lambda}{\alpha} (y^2-x^2)}}, \\
\label{p3gain}p_1(x,s(t))=&-\alpha P_{1}(x,s(t)), 
\end{align}
where $I_1(x)$ is a modified Bessel function of the first kind. 
\subsubsection{Inverse transformation }
Also, one can derive the inverse transformation that maps the $\tilde{w}$-system  \eqref{wtilde1}-\eqref{wtilde3} into the $\tilde{u}$-system  \eqref{eq:errorPDE}-\eqref{eq:errorBC2} as
\begin{align}\label{inverseerror}
\tilde{w}(x,t) =& \tilde{u}(x,t) - \int_{x}^{s(t)} Q_{1}(x,y) \tilde{u}(y,t) dy, \\
Q_{1}(x,y) =& \frac{\lambda}{\alpha} y\frac{J_1\left(\sqrt{\frac{\lambda}{\alpha} (y^2-x^2)}\right)}{\sqrt{\frac{\lambda}{\alpha} (y^2-x^2)}}, \label{bessel}
\end{align}
where $J_1(x)$ is a Bessel function of the first kind. 

 \subsection{Stability Analysis}
To show the stability of the target $\tilde{w}$-system in \eqref{wtilde1}-\eqref{wtilde3}, we consider a functional
\begin{align}\label{eq:lyapunov}
\tilde{V} = &\frac{1}{2} ||\tilde{w}||_{{\cal H}_1}^2.
\end{align}
Taking time derivative of \eqref{eq:lyapunov} along the solution of the target system \eqref{wtilde1}-\eqref{wtilde3}, we obtain 
\begin{align}
\dot{\tilde{V}} = -\alpha|| \tilde{w}_{x}||_{{\cal H}_{1}}^2  - \lambda  ||\tilde{w}||_{{\cal H}_1}^2  -\frac{\dot{s}(t)}{2}\tilde{w}_{x}(s(t),t)^2.
\end{align}
Using  \eqref{physical constraints} and applying Pointcare's inequality, a differential inequality in  $\tilde{V}$ is derived as
\begin{align}
\dot{\tilde{V}}\leq - \left( \frac{\alpha}{4s_{{\mathrm r}}^2} + \lambda\right) \tilde{V}.
\end{align}
Hence, the origin of the target $\tilde{w}$-system is exponentially stable. Since the transformation \eqref{eq:errorDBST} is invertible as in \eqref{inverseerror}, the exponential stability of $\tilde{w}$-system at the origin induces the exponential stability of $\tilde{u}$-system at the origin with the help of \eqref{constraint2}, which completes the proof of Theorem \ref{observer}. 

\section{Observer-Based Output-Feedback Control }\label{sec:output}
An output feedback control law is  constructed  using the reconstruction of the estimated temperature profile through the exponentially convergent observer  \eqref{observerPDE}-\eqref{observerBC1} and the measured interface position $Y(t)=s(t)$  as shown in  Fig. \ref{statediag2} and the following Theorem holds:

\begin{thm}\label{outputthm}
Consider the closed-loop system  \eqref{eq:stefanPDE}--\eqref{eq:stefanODE} with the measurement $Y(t)=s(t)$ and the observer \eqref{observerPDE}-\eqref{observerBC1} under the output feedback control law
\begin{align}\label{outctr}
q_{{\mathrm c}}(t)=&-ck\left(\frac{1}{\alpha} \int_{0}^{Y(t)} \left(\hat{T}(x,t)-T_{{\mathrm m}}\right) dx\right.\notag\\
&\left.+\frac{1 }{\beta} \left(Y(t)-s_{{\mathrm r}}\right) \right). 
\end{align}
Assume that the initial values $\left(\hat{T}_{0}(x), s_0\right)$ are compatible with the control law and the initial plant states $(T_{0}(x), s_0)$ satisfy \eqref{eq:stefanICbound}. Additionally, assume that the upper bound of the initial temperature is known, i.e. the Lipschitz constant $H$ in \eqref{eq:stefanICbound} is known. Then, for any initial temperature estimation $\hat{T}_0(x)$, any gain parameter of the observer $\lambda$, and  any setpoint $s_{{\mathrm r}}$ satisfying \begin{align}\label{restriction1}
T_{{\mathrm m}} +  \hat{H}_{l}(s_0 - x) \leq& \hat{T}_0(x) \leq T_{{\mathrm m}} +  \hat{H}_{u}(s_0 - x), \quad \\
\label{restriction2} \lambda <& \frac{4\alpha}{s_0^2}  \frac{\hat{H}_{l}-H}{\hat{H}_{u}}, \\
\label{restriction3} s_{{\mathrm r}}>&s_0+\frac{\beta s_0^2}{2\alpha } \hat{H}_{u}, 
\end{align}
respectively,  where the parameters $\hat{H}_{u}$ and $\hat{H}_{l}$ satisfy $\label{Hcond}\hat{H}_{u}\geq \hat{H}_{l}>H$, the closed-loop system is exponentially stable in the sense of the norm 
\begin{align}\label{h1output}
&||T-\hat{T}||_{{\cal H}_1}^2+||T-T_{{\mathrm m}}||_{{\cal H}_1}^2+(s(t)-s_{{\mathrm r}})^2 .
\end{align}
\end{thm}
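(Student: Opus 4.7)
The plan is to split the analysis into three stages: (i) import the observer convergence established in Theorem~\ref{observer}, (ii) cast the observer-based closed loop as a perturbation of the state-feedback case of Section~\ref{state-feedback}, and (iii) verify that the physical constraints $q_{\mathrm c}>0$ and $s(t)<s_{\mathrm r}$ persist under the restrictions \eqref{restriction1}--\eqref{restriction3}. Set $u=T-T_{\mathrm m}$, $\hat u=\hat T-T_{\mathrm m}$, $\tilde u=u-\hat u$ and $X=s-s_{\mathrm r}$. The $\tilde u$-system \eqref{eq:errorPDE}--\eqref{eq:errorBC2} has already been mapped by \eqref{eq:errorDBST} onto the exponentially decaying $\tilde w$-system \eqref{wtilde1}--\eqref{wtilde3}, so I would apply the state-feedback backstepping transformation \eqref{eq:DBST} with $u$ replaced by $\hat u$,
\[
\hat w(x,t)=\hat u(x,t)-\frac{\beta}{\alpha}\int_{x}^{s(t)}\phi(x-y)\hat u(y,t)\,dy-\phi(x-s(t))X(t),
\]
producing a target system of the same form as \eqref{eq:tarPDE}--\eqref{eq:tarODE} but now driven by the observer-error boundary flux $\tilde u_x(s(t),t)$ (entering through $\dot X=-cX-\beta\hat w_x(s,t)-\beta\tilde u_x(s,t)$) and by a kernel integral of the observer injection $p_1(x,s)\tilde u_x(s(t),t)$.

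The main obstacle is the constraint verification. Differentiating \eqref{outctr} along \eqref{eq:stefanPDE}--\eqref{eq:stefanODE} and \eqref{observerPDE}--\eqref{observerBC1}, using $\dot Y=-\beta u_x(s,t)$ together with $u_x(s,t)-\hat u_x(s,t)=\tilde u_x(s,t)$, I expect an identity of the form
\[
\dot q_{\mathrm c}(t)=-c\,q_{\mathrm c}(t)+c\,k\,\Gamma(s(t))\,\tilde u_x(s(t),t),\qquad \Gamma(s):=1-\tfrac{1}{\alpha}\int_{0}^{s}p_1(x,s)\,dx,
\]
so that, in contrast to the autonomous decay \eqref{solq} of the state-feedback case, $q_{\mathrm c}$ is perturbed by the observer-error boundary flux. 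The setpoint restriction \eqref{restriction3} with the upper half of \eqref{restriction1} makes $q_{\mathrm c}(0)>0$. The lower half of \eqref{restriction1}, with $\hat H_l>H$, gives $\tilde u_0(x)\leq 0$ pointwise with envelope controlled by $\hat H_l-H$; a maximum-principle argument on the $\tilde w$-system coupled with the explicit Bessel-kernel transformations \eqref{eq:errorDBST}, \eqref{inverseerror} then yields a uniform-in-time, exponentially decaying bound on $\tilde u_x(s(t),t)$ scaling with $(\hat H_l-H)/\hat H_u$ and $\lambda$. Bound \eqref{restriction2} is exactly the threshold that makes the perturbation $c\,k\,\Gamma(s)\tilde u_x(s,t)$ too small to drive $q_{\mathrm c}$ through zero, so a contradiction argument identical to the one in the proof of Theorem~\ref{robust-thm} delivers $q_{\mathrm c}(t)>0$ for all $t\geq 0$. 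Lemma~\ref{lemma1} and Corollary~\ref{monoinc} then return $T\geq T_{\mathrm m}$ and $\dot s\geq 0$, and feeding $q_{\mathrm c}>0$ back into \eqref{outctr} together with $\hat u\geq 0$ (inherited from \eqref{restriction1} via the same maximum-principle/kernel analysis applied to $\hat u$) gives $s_0\leq s(t)<s_{\mathrm r}$.

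Once the constraints are in hand I would close the stability argument with the composite Lyapunov functional
\[
\mathcal V \;=\; \tfrac{1}{2}\|\hat w\|_{{\cal H}_1}^{2}+\tfrac{p}{2}X(t)^{2}+A\cdot\tfrac{1}{2}\|\tilde w\|_{{\cal H}_1}^{2},
\]
with $A>0$ chosen large enough to absorb the cross terms proportional to $\tilde u_x(s,t)$ and to the kernel integrals of $\tilde u$ via Agmon's, Poincar\'e's, Cauchy-Schwarz, and Young's inequalities; since $\tilde w$ decays at rate $\tfrac{\alpha}{4s_{\mathrm r}^{2}}+\lambda$ independently of the closed loop, inflating $A$ is affordable. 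The remaining $\dot s$-driven term is neutralized by the same reweighting $W=\mathcal V e^{-as(t)}$ as in \eqref{eq:lyap}--\eqref{dotV}, yielding exponential decay of $\mathcal V$, and invertibility of both transformations \eqref{eq:DBST} and \eqref{eq:errorDBST} pushes this decay back to the target norm \eqref{h1output}. The hard part, I expect, remains the quantitative boundary-flux estimate for $\tilde u_x(s(t),t)$: making it sharp enough to produce the explicit threshold $\lambda<\tfrac{4\alpha}{s_0^{2}}\tfrac{\hat H_l-H}{\hat H_u}$ is what ties together \eqref{restriction1}, \eqref{restriction2}, and \eqref{restriction3} into a coherent feasibility envelope for the observer-based design.
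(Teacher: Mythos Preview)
Your overall architecture---backstepping on $\hat u$, composite Lyapunov functional $\tfrac12\|\hat w\|_{\mathcal H_1}^2+\tfrac p2 X^2+A\tilde V$, and the reweighting $W=\mathcal V e^{-as(t)}$---matches the paper's, and the stability portion would go through essentially as you outline.

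The genuine gap is in your constraint verification, specifically in the role you assign to \eqref{restriction2} and to $\tilde u_x(s(t),t)$. You treat the forcing term in $\dot q_{\mathrm c}=-cq_{\mathrm c}+ck\,\Gamma(s)\,\tilde u_x(s,t)$ as a perturbation of unknown sign that must be made \emph{small}, and you interpret \eqref{restriction2} as the threshold guaranteeing that smallness. That is not the mechanism. The paper shows that under \eqref{restriction1}--\eqref{restriction2} the perturbation has a \emph{definite sign}: one first verifies that \eqref{restriction2} is precisely the condition ensuring $\tilde w(x,0)<0$ on $(0,s_0)$ through the inverse transformation \eqref{inverseerror} (this is where the Bessel kernel $Q_1$ and the envelopes $H,\hat H_l,\hat H_u$ enter, and where the constant $4\alpha/s_0^2$ originates). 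The maximum principle on the $\tilde w$-system then propagates $\tilde w(x,t)<0$, the direct transformation \eqref{eq:errorDBST} with its positive kernel $P_1$ returns $\tilde u(x,t)<0$, and Hopf's lemma gives $\tilde u_x(s(t),t)>0$. Since also $\Gamma(s)=1+\int_0^s P_1(x,s)\,dx>0$, the forcing term is \emph{nonnegative}, so $\dot q_{\mathrm c}\geq -cq_{\mathrm c}$ directly---no quantitative flux estimate and no contradiction argument of the Theorem~\ref{robust-thm} type is needed. Your proposed ``hard part'' (a sharp exponentially decaying bound on $\tilde u_x(s,t)$) is therefore neither necessary nor the source of the threshold in \eqref{restriction2}; attempting it would not reproduce that constant.

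A secondary point: you obtain $\hat u\geq 0$ by a separate maximum-principle/kernel analysis on the $\hat u$-system, but that system carries the injection $p_1(x,s)\tilde u_x(s,t)$, which obstructs a clean comparison argument. The paper instead gets $\hat u=u-\tilde u>0$ in one line from $u>0$ (Lemma~\ref{lemma1}, once $q_{\mathrm c}>0$ is known) and $\tilde u<0$.
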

The proof of Theorem \ref{outputthm} is  derived by
\begin{itemize}\item  introducing an appropriate backstepping transformation together with a suitable target system, \item verifying the constraints \eqref{physical constraints} and \eqref{constraint2},
\item and establishing the  Lyapunov stability proof.
\end{itemize}

\begin{figure}[t]
\centering
\includegraphics[width=3.4in]{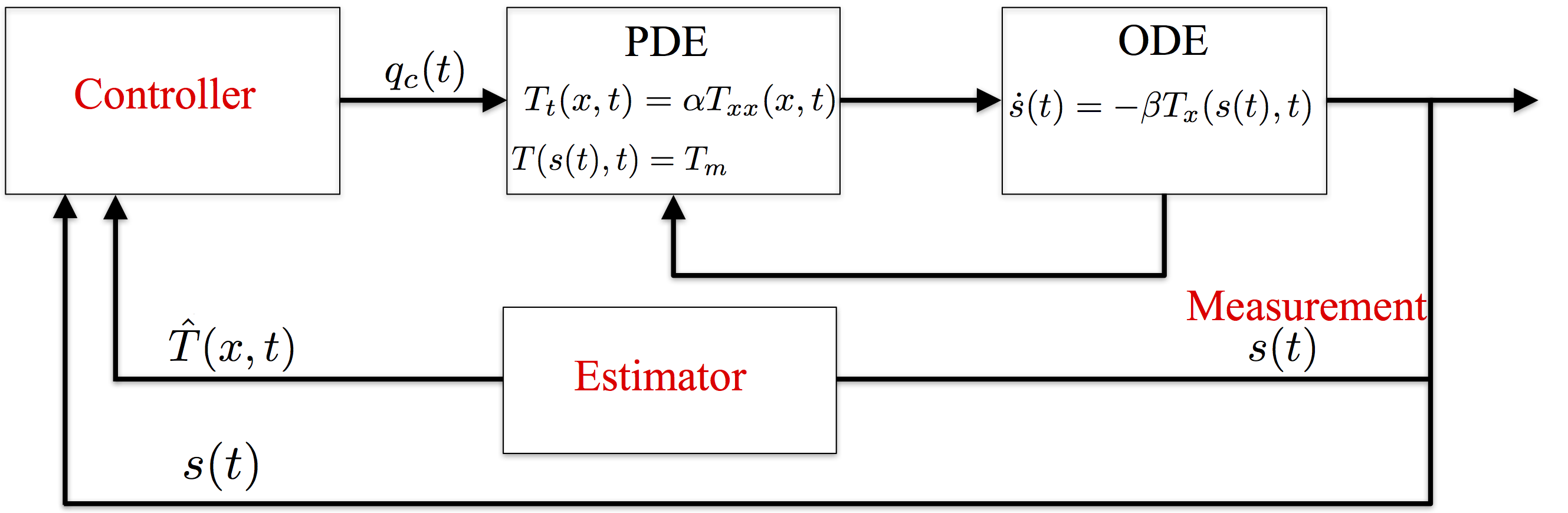}
\caption{Block diagram of observer design and output feedback.}\label{statediag2}
\end{figure}

\subsection{Backstepping Transformation}\label{outputback}
By equivalence, the transformation of the variables $(\hat{u},X)$ into $(\hat{w},X)$ leads to  the gain kernel functions defined by the state-feedback backstepping transformation \eqref{eq:DBST} given by
\begin{align}\label{eq:observerDBST}
\hat{w}(x,t)=\hat{u}(x,t)-\frac{c}{\alpha} \int_{x}^{s(t)} (x-y)\hat{u}(y,t) dy\nonumber\\
-\frac{c}{\beta}(x-s(t)) X(t),
\end{align}
with an associated  target system given by
\begin{align}\label{obsvtarPDE}
\hat{w}_t(x,t)=&\alpha \hat{w}_{xx}(x,t)+\frac{c}{\beta}\dot{s}(t)X(t)\nonumber\\
 &+ f(x,s(t))\tilde{w}_{x}(s(t),t),\\
\hat{w}(s(t),t)=&0, \quad \hat{w}_{x}(0,t)=0,\\
\dot{X}(t)=&-cX(t)-\beta \hat{w}_{x}(s(t),t) - \beta \tilde{w}_{x}(s(t),t) \label{obsvtarODE},
\end{align}
where $f(x,s(t)) = P_1(x,s(t))- \frac{c}{\alpha}  \int_{x}^{s(t)} (x-y) P_1(y,s(t)) dy  -c  (s(t)-x)$.
Evaluating the spatial derivative of \eqref{eq:observerDBST} at  $x=0$, we derive  the output feedback controller  as
\begin{align}\label{output}
q_{{\mathrm c}}(t)&=-ck\left( \frac{1}{\alpha}\int_{0}^{s(t)} \hat{u}(x,t) dx+\frac{1}{\beta} X(t)\right). 
\end{align}
By the same procedure as in Appendix \ref{appA}, an inverse transformation is given by
\begin{align}\label{whatinv}
\hat{u}(x,t)=&\hat{w}(x,t)+ \frac{\beta}{\alpha}\int_{x}^{s(t)} \psi(x-y)\hat{w}(y,t) dy\notag\\
&+ \psi(x-s(t)) X(t), \\
\psi(x) =&\frac{c}{\beta} \sqrt{\frac{\alpha}{c}} {\rm sin} \left(\sqrt{\frac{c}{\alpha}} x \right). 
\end{align}

\subsection{Physical Constraints}\label{sec:constraints}
In this section, we derive sufficient conditions to guarantee the two physical constraints \eqref{physical constraints} and \eqref{constraint2}. First, we state the following lemma. 
\begin{lem}\label{wmaximum}
Suppose that $\tilde{w}(0,t)<0$. Then, the solution of \eqref{wtilde1}-\eqref{wtilde3} satisfies $\tilde{w}(x,t)<0$, $\forall x\in(0,s(t))$, $\forall t>0$.
\end{lem}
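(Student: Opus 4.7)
The approach is a parabolic maximum-principle argument. First I would absorb the reaction term by the substitution $v(x,t) = e^{\lambda t}\tilde{w}(x,t)$, which converts \eqref{wtilde1} into the pure heat equation $v_t = \alpha v_{xx}$ on the moving domain $0 < x < s(t)$, preserving the boundary conditions $v(s(t),t) = 0$ and $v_x(0,t) = 0$, and turning the hypothesis into $v(0,t) < 0$. This reduces the problem to a standard maximum-principle question on a non-cylindrical domain, with the nuisance reaction coefficient $\lambda$ no longer present.

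Next I would invoke the weak parabolic maximum principle on $\Omega_T = \{(x,t) : 0 \leq x \leq s(t),\ 0 \leq t \leq T\}$ for arbitrary $T > 0$. This is legitimate because Corollary \ref{monoinc} gives $\dot{s}(t) \geq 0$, so the lateral curve $x = s(t)$ lies on the parabolic boundary of $\Omega_T$ and is reached only as the domain expands. On this parabolic boundary one has $v = 0$ on the right, $v < 0$ on the left, and $v(x,0) \leq 0$ on the bottom (the last item being inherited in the context of Theorem \ref{outputthm} from the initial estimation-error restriction \eqref{restriction1}, transported through the inverse transformation \eqref{inverseerror}). Hence $\max_{\overline{\Omega}_T} v \leq 0$, which immediately gives $\tilde{w}(x,t) \leq 0$ throughout $\Omega_T$.

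To upgrade to the strict inequality in the interior, I would apply the strong maximum principle together with Hopf's lemma: if $v(x_0,t_0) = 0$ were attained at some point with $x_0 \in (0, s(t_0))$, then $v$ would have to be identically zero on the parabolic component reaching back to the left boundary, contradicting the strict inequality $v(0,t) < 0$. Thus $\tilde{w}(x,t) < 0$ for every $x \in (0,s(t))$ and every $t > 0$, and since $T > 0$ was arbitrary the claim follows for all time.

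The main obstacle is making the parabolic maximum principle rigorous on the moving spatial domain. Because $s(t)$ is non-decreasing by Corollary \ref{monoinc}, the domain only expands and any newly exposed lateral boundary point is automatically assigned the value $v = 0$ by the Dirichlet condition, so no pathological boundary behavior occurs; if one wishes to stay within textbook statements, a change of variable $\xi = x/s(t)$ reduces the situation to a fixed cylindrical domain $[0,1]$ with an added convective term proportional to $\dot{s}(t)$, to which the classical weak and strong maximum principles apply directly. The Neumann boundary at $x=0$ is harmless here because the hypothesis $v(0,t) < 0$ already rules out a boundary maximum at that face without the need to invoke Hopf's lemma on that side.
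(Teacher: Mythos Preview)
Your approach via the parabolic maximum principle is exactly what the paper intends; the paper's own ``proof'' is merely the one-line remark that the result follows from the maximum principle in \cite{nonlinearPDE}, so there is nothing further to compare. One clarification worth making: the hypothesis ``$\tilde{w}(0,t)<0$'' in the lemma statement is evidently a typo for ``$\tilde{w}(x,0)<0$'', as the paper's own invocation of this lemma in the proof of Lemma~\ref{maximum} (``Lemma~\ref{wmaximum} states that if $\tilde{w}(x,0)<0$, then $\tilde{w}(x,t)<0$'') makes explicit. With the corrected hypothesis your importation of the initial-data sign from the surrounding context becomes the actual assumption of the lemma rather than an external patch, and the Neumann face $x=0$ is handled cleanly by even reflection (or equivalently by Hopf's lemma), so that the parabolic boundary reduces to $\{t=0\}\cup\{x=s(t)\}$ and the argument is self-contained.
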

The proof of Lemma \ref{wmaximum} is provided by maximum principle as in \cite{nonlinearPDE}.  Next, we state the following lemma. 
\begin{lem}\label{maximum}
For any   initial temperature estimate  $\hat{T}_0(x)$ and, any observer gain parameter $\lambda$  satisfying  \eqref{restriction1} and \eqref{restriction2}, respectively, the following properties hold:
\begin{align}\label{positive}
 &\tilde{u}(x,t) < 0, \quad \tilde{u}_{x}(s(t),t) > 0, \quad \forall x\in(0,s(t)), \quad \forall t>0
\end{align}
\end{lem}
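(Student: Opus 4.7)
The plan is to establish the two strict sign properties by transferring them to the target system $\tilde w$ via the backstepping maps, using the maximum principle and Hopf's lemma there, and then transferring back to $\tilde u$. The restrictions \eqref{restriction1} and \eqref{restriction2} enter precisely at the step where negativity of the initial data is propagated through the inverse transformation \eqref{inverseerror}.

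First, I would extract pointwise initial bounds on $\tilde u_0(x)=u_0(x)-\hat u_0(x)$. Combining the Lipschitz bound $0\le u_0(x)\le H(s_0-x)$ from \eqref{eq:stefanICbound} with the two-sided bound \eqref{restriction1} gives
\begin{align}
-\hat H_u(s_0-x)\;\le\;\tilde u_0(x)\;\le\;-(\hat H_l-H)(s_0-x), \notag
\end{align}
so $\tilde u_0(x)<0$ on $[0,s_0)$ since $\hat H_l>H$. Next I apply the inverse transformation \eqref{inverseerror} at $t=0$ to write $\tilde w_0(x)=\tilde u_0(x)-\int_x^{s_0}Q_1(x,y)\tilde u_0(y)\,dy$. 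Using that $J_1(z)/z$ is positive and bounded by $1/2$ for the arguments involved (this is where \eqref{restriction2} controls the range of the Bessel argument $\sqrt{(\lambda/\alpha)(s_0^2-x^2)}$), the kernel $Q_1$ is positive, so the upper bound on $\tilde u_0$ yields the linear-in-$\lambda$ estimate
\begin{align}
\tilde w_0(x)\;\le\;-(\hat H_l-H)(s_0-x)+\frac{\hat H_u\lambda}{2\alpha}\int_x^{s_0}y(s_0-y)\,dy. \notag
\end{align}
A short calculation factors the integral as $(s_0-x)^2(s_0+2x)/12$, and then restriction \eqref{restriction2} forces the right-hand side to remain strictly negative on $[0,s_0)$. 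Hence $\tilde w_0(x)<0$ on $(0,s_0)$.

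With the initial negativity in hand, I would apply the maximum principle to the target system \eqref{wtilde1}--\eqref{wtilde3}. Since the reaction coefficient $-\lambda$ is of good sign (it only decreases positive maxima), the boundary data are $\tilde w(s(t),t)=0$ and $\tilde w_x(0,t)=0$, and $\tilde w_0<0$, the strong parabolic maximum principle (on the moving cylinder, as invoked in Lemma \ref{wmaximum}) yields $\tilde w(x,t)<0$ for all $x\in(0,s(t))$ and $t>0$. Transferring back to $\tilde u$ through the direct transformation \eqref{eq:errorDBST} with kernel $P_1(x,y)=(\lambda/\alpha)y\,I_1(\cdot)/(\cdot)>0$, the identity $\tilde u=\tilde w+\int_x^{s(t)}P_1\tilde w\,dy$ shows that $\tilde u(x,t)<0$ on $(0,s(t))$, which is the first claim.

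For the second claim, I would invoke Hopf's lemma for $\tilde w$ at the Dirichlet endpoint $x=s(t)$: since $\tilde w<0$ in the interior and $\tilde w(s(t),t)=0$, the boundary value is a strict maximum along the spatial direction, so $\tilde w_x(s(t),t)>0$. Finally, differentiating \eqref{eq:errorDBST} in $x$ and evaluating at $x=s(t)$, the integral term and the kernel-boundary term collapse because $\tilde w(s(t),t)=0$, leaving $\tilde u_x(s(t),t)=\tilde w_x(s(t),t)>0$. The main obstacle is the second step: showing $\tilde w_0<0$ from $\tilde u_0<0$ requires a sharp quantitative bound on the Bessel kernel $Q_1$, and it is precisely the trade-off between $\hat H_l-H$, $\hat H_u$ and $\lambda$ encoded in \eqref{restriction2} that makes this estimate work; everything downstream is classical maximum-principle and Hopf reasoning applied after the transformation.
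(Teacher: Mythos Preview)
Your proposal is correct and follows essentially the same route as the paper: use \eqref{restriction1}--\eqref{restriction2} together with the bound $J_1(z)/z\le 1/2$ on the kernel $Q_1$ to force $\tilde w_0<0$, invoke Lemma~\ref{wmaximum} for $\tilde w<0$, and push this back through the positive kernel $P_1$ to obtain $\tilde u<0$. The only minor deviation is that you apply Hopf's lemma to $\tilde w$ and then use $\tilde u_x(s(t),t)=\tilde w_x(s(t),t)$, whereas the paper applies Hopf directly to $\tilde u$; your version is arguably cleaner since $\tilde w$ satisfies a local equation, and (aside from a harmless factor-of-two slip in your integral $\int_x^{s_0}y(s_0-y)\,dy=(s_0-x)^2(s_0+2x)/6$) the quantitative check goes through with margin under \eqref{restriction2}.
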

\begin{proof}
Lemma \ref{wmaximum} states that if $\tilde{w}(x,0)<0$, then $\tilde{w}(x,t)<0$. In addition, by the direct transformation \eqref{eq:errorDBST}, $\tilde{w}(x,t)<0$ leads to $\tilde{u}(x,t)<0$ due to the positivity of the solution to the gain kernel \eqref{p1gain}. Therefore, with the help of \eqref{inverseerror}, we deduce that $\tilde{u}(x,t)<0$ if $\tilde{u}(x,0)$ satisfies 
\begin{align}\label{condinv}
 \tilde{u}(x,0) < \int_{x}^{s_0} Q(x,y) \tilde{u}(y,0) dy, \quad \forall x\in(0,s_0). 
\end{align}
Considering the bound of the solution \eqref{bessel} under the condition of \eqref{restriction1}, the sufficient condition for \eqref{condinv} to hold is given by \eqref{restriction2} which restricts the gain parameter $\lambda$. Thus, we have shown that conditions \eqref{restriction1} and \eqref{restriction2} lead to $\tilde{u}(x,t)<0$ for all $ x\in(0,s_0)$. In addition, from the  boundary condition \eqref{eq:errorBC2} and Hopf's lemma, it follows that $\tilde{u}_{x}(s(t),t)>0$. 
\end{proof}

The final step is to prove that the output feedback closed-loop system satisfies the physical constraints \eqref{physical constraints}.  

\begin{prop}\label{proposition}
Suppose the initial values $(\hat{T}_0(x), s_0)$ satisfy \eqref{restriction1} and the setpoint $s_{{\mathrm r}}$ is chosen to satisfy \eqref{restriction3}. 
Then, the physical constraints \eqref{physical constraints} and \eqref{constraint2} are satisfied by  the closed-loop system consisting of the plant   \eqref{eq:stefanPDE}--\eqref{eq:stefanODE}, the observer \eqref{observerPDE}-\eqref{observerBC1} and the output feedback control law \eqref{outctr}.
\end{prop}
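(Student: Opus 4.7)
The plan is to adapt the contradiction argument used in the proof of Theorem \ref{robust-thm} to the observer-based setting. The goal is to establish $q_{\mathrm c}(t)>0$ for all $t>0$; the companion inequality $s(t)<s_{\mathrm r}$ will then follow from the output-feedback control law together with the nonnegativity of the estimated temperature.

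First I would verify the initial positivity $q_{\mathrm c}(0)>0$. Evaluating \eqref{outctr} at $t=0$ and using the upper bound of the initial estimate in \eqref{restriction1},
\begin{align*}
\int_0^{s_0}(\hat T_0(x)-T_{{\mathrm m}})\,dx\leq \hat H_u\frac{s_0^{2}}{2},
\end{align*}
so that $q_{\mathrm c}(0)\geq\frac{ck}{\beta}\bigl(s_{\mathrm r}-s_0-\frac{\beta s_0^{2}}{2\alpha}\hat H_u\bigr)$, which is strictly positive by \eqref{restriction3}.

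Next I would derive an ODE for $q_{\mathrm c}(t)$. Differentiating \eqref{outctr} along the closed-loop trajectories, applying the Leibniz rule, substituting the observer PDE \eqref{observerPDE}, integrating by parts, and using the boundary conditions $\hat u(s(t),t)=0$ and $\hat u_x(0,t)=-q_{\mathrm c}(t)/k$ together with the identity $\tilde u_x(s(t),t)=-\dot s(t)/\beta-\hat u_x(s(t),t)$, after cancellations I expect
\begin{align*}
\dot q_{\mathrm c}(t)=-c\,q_{\mathrm c}(t)+ck\,\tilde u_x(s(t),t)\Bigl(1+\int_0^{s(t)}P_1(x,s(t))\,dx\Bigr),
\end{align*}
where the kernel $P_1$ in \eqref{p1gain} is nonnegative because $I_1$ is positive on $(0,\infty)$. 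The contradiction step then proceeds as follows: assume $t_1>0$ is the first time for which $q_{\mathrm c}(t_1)=0$ with $q_{\mathrm c}(t)>0$ on $(0,t_1)$. On this interval Lemma \ref{lemma1} and Corollary \ref{monoinc} deliver the model validity \eqref{valid1} and the monotonicity $\dot s(t)\geq 0$, so Lemma \ref{maximum} applies under \eqref{restriction1}--\eqref{restriction2} and yields $\tilde u_x(s(t),t)>0$. The displayed ODE then gives $\dot q_{\mathrm c}(t)\geq -c\,q_{\mathrm c}(t)$ on $(0,t_1)$, and by comparison $q_{\mathrm c}(t_1)\geq q_{\mathrm c}(0)e^{-ct_1}>0$, contradicting $q_{\mathrm c}(t_1)=0$. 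Hence \eqref{physical constraints} holds. For \eqref{constraint2}, Lemma \ref{maximum} gives $\tilde u(x,t)\leq 0$, so $\hat u(x,t)=u(x,t)-\tilde u(x,t)\geq 0$; together with $q_{\mathrm c}(t)>0$ and the equivalent form \eqref{output} of the control law, this forces $X(t)=s(t)-s_{\mathrm r}<0$.

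The main obstacle I anticipate is the apparent circularity in invoking Lemma \ref{maximum}, whose conclusion presupposes model validity, which in turn presupposes positivity of $q_{\mathrm c}$. This is resolved by the local-in-time nature of the contradiction argument: Lemma \ref{maximum} is only needed on the interval $(0,t_1)$ on which $q_{\mathrm c}>0$ is assumed, so its hypotheses are legitimately in force there. The computation of $\dot q_{\mathrm c}$ also demands some care because of the observer-gain term proportional to $\dot Y(t)/\beta$ and the time-varying spatial domain, but with the correct use of Leibniz's rule and integration by parts these manipulations are routine.
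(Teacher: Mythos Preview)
Your proposal is correct and follows essentially the same approach as the paper: derive the ODE $\dot q_{\mathrm c}=-cq_{\mathrm c}+ck\,\tilde u_x(s(t),t)\bigl(1+\int_0^{s(t)}P_1\,dx\bigr)$, use positivity of $P_1$ and Lemma~\ref{maximum} to get $\dot q_{\mathrm c}\geq -cq_{\mathrm c}$, conclude $q_{\mathrm c}(t)>0$ from $q_{\mathrm c}(0)>0$, and then read off $X(t)<0$ from \eqref{output} together with $\hat u\geq 0$. Your explicit contradiction framing (restricting to $(0,t_1)$) is a slightly more careful way of handling the circularity you flag than what the paper writes, but the substance is the same.
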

\begin{proof}
Taking the time derivative of \eqref{output} along with the solution \eqref{obsfirst}--\eqref{obslast}, with the help of the observer gain \eqref{p3gain},  we derive the following differential equation:
\begin{align}\label{odecontrol}  
&\dot{q}_c(t)=-cq_{{\mathrm c}}(t)+\left( 1+ \int_0^{s(t)} P(x,s(t)) dx \right)\tilde{u}_x(s(t),t).
\end{align}
From the positivity of the solution \eqref{p1gain} and the Neumann boundary value \eqref{positive},  the following differential inequality holds
\begin{align}
&\dot{q}_c(t)\geq -cq_{{\mathrm c}}(t).
\end{align}
Hence, if the initial values satisfy $q_{{\mathrm c}}(0)>0$, equivalently \eqref{restriction3} is satisfied, from     \eqref{output} and \eqref{restriction1}, we get
\begin{align}\label{positivecon}
 q_{{\mathrm c}}(t)>0, \quad \forall t>0. 
 \end{align}
Then, with the relation \eqref{positive} given in Lemma \ref{maximum} and the positivity of $u(x,t)$ provided by Lemma \ref{lemma1}, the following inequality is established :
\begin{align}\label{positivehat}
\hat{u}(x,t) >0, \quad \forall x \in(0,s(t)), \quad  \forall t>0.
\end{align}
Finally, substituting the inequalities \eqref{positivecon} and \eqref{positivehat} into \eqref{output}, we arrive at $X(t) <0$, $\forall t>0$, which guarantees that the second physical constraint \eqref{constraint2} is satisfied. 
\end{proof} 

\subsection{Stability Analysis}\label{outputstability}
We consider a functional
\begin{align}\label{eq:obsvlyapunov}
\hat{V} = &\frac{1}{2} ||\hat{w}||_{{\cal H}_1}^2+\frac{p}{2}X(t)^2+d\tilde{V},
\end{align}
where $d$ is chosen to be large enough and $p$ is appropriately selected. Taking time derivative of \eqref{eq:lyapunov} along the solution of target system \eqref{obsvtarPDE}-\eqref{obsvtarODE}, and applying Young's, Cauchy-Schwarz, Poincare's, Agmon's inequalities, with the help of  \eqref{physical constraints} and \eqref{constraint2}, the following  holds:
\begin{align}\label{estimatelyap}
\dot{\hat{V}}\leq & -b\hat{V} +a\dot{s}(t)\hat{V},
\end{align}
where, $a = {\rm max} \left\{ s_{{\mathrm r}}^2, \frac{16cs_{{\mathrm r}}}{\alpha}\right\}$, $b = {\rm min}\left\{\frac{\alpha}{8s_{{\mathrm r}}^2}, c, 2\lambda\right\}$. Hence, the origin of the $(\hat{w}, X, \tilde{w})$-system is exponentially stable. Since the transformation \eqref{eq:errorDBST} and \eqref{eq:observerDBST} are  invertible as described in \eqref{inverseerror} and \eqref{whatinv},  the exponential stability of $(\hat{w}, X, \tilde{w})$-system at the origin guarantees the exponential stability of $(\hat{u}, X, \tilde{u})$-system at the origin, which completes the proof of Theorem \ref{outputthm}. 
\section*{Part II:  Dirichlet Boundary actuation}
In PART I,  the Neumann boundary actuation of the manipulated heat flux has been considered to design state and output feedback controllers for the one-dimensional Stefan Problem. However, some actuators require Dirichlet boundary control design, such as a thermo-electric cooler actuation controlling a boundary temperature \cite{Boon2014}. In the following section, the boundary temperature is used for the control design. 

\section{Boundary Temperature Control}\label{sec:tempcontrol}
We define the control problem consisting of the same diffusion equation \eqref{eq:stefanPDE}, Dirichlet boundary condition \eqref{eq:stefanBC}, and initial conditions \eqref{eq:stefanIC} : 
 \begin{align}
 \label{PDEtemp}
 T_{t}(x,t) =& \alpha T_{xx}(x,t), \quad 0\le x\le s(t), \\
\label{BCtemp} T(0,t) =& T_{{\mathrm c}}(t), \\
\label{BC2temp}T(s(t),t) =& T_{{\mathrm m}}, \\
\label{ODEtemp}\dot{s}(t) =& - \beta T_{x}(s(t),t)
\end{align}
where $T_{{\mathrm c}}(t)$ is a controlled  temperature. Analogously to Lemma \ref{lemma1}, the following lemma is stated.
\begin{lem}\label{lemma3}
For any $T_{{\mathrm c}}(t)>0$ on the finite time interval $(0,\bar{t})$, $T(x,t)>T_{{\mathrm m}}, ~\forall x\in(0,s(t))$ and $\forall t\in(0,\bar{t})$. And then $\dot{s}(t) >0$, $\forall t\in(0,\bar{t})$. 
\end{lem}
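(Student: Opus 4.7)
The plan is to follow the Neumann template of Lemma \ref{lemma1} almost verbatim, replacing the role of the positive heat flux at $x=0$ by the positivity of the Dirichlet datum (interpreting the hypothesis as $T_{{\mathrm c}}(t) > T_{{\mathrm m}}$, i.e.\ the natural Dirichlet analog of $q_{{\mathrm c}}(t)>0$). The two ingredients are the parabolic minimum principle, which delivers strict positivity of $T-T_{{\mathrm m}}$ in the interior of the melt zone, and Hopf's lemma at the moving front, which promotes this interior positivity to a strict sign for the flux appearing in the Stefan condition \eqref{ODEtemp}.

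First, I would introduce the shifted variable $v(x,t) := T(x,t) - T_{{\mathrm m}}$. It satisfies $v_t = \alpha v_{xx}$ on the moving cylinder $\{(x,t)\colon 0<x<s(t),\, 0<t<\bar t\}$, with boundary and initial data $v(0,t) = T_{{\mathrm c}}(t) - T_{{\mathrm m}} > 0$, $v(s(t),t) = 0$, and $v(x,0) = T_0(x) - T_{{\mathrm m}} \ge 0$ by Assumption \ref{assumini}. The weak minimum principle then rules out any interior negative minimum of $v$, and the strong minimum principle upgrades this to $v(x,t)>0$ throughout the open cylinder, since $v$ is nonconstant (strictly positive on the left face). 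This gives the first conclusion $T(x,t) > T_{{\mathrm m}}$ for every $(x,t) \in (0,s(t)) \times (0,\bar t)$.

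Second, to obtain $\dot s(t) > 0$, I would apply Hopf's lemma at the moving right boundary $x=s(t)$. That point lies on the parabolic boundary and realizes the minimum value $v=0$ of a function that is strictly positive in the interior, so Hopf's lemma forces the one-sided spatial derivative to be strictly negative, $v_x(s(t),t) = T_x(s(t),t) < 0$. Substituting into \eqref{ODEtemp} yields $\dot s(t) = -\beta T_x(s(t),t) > 0$ on $(0,\bar t)$, as claimed.

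The only non-routine step is justifying both principles on a domain whose right boundary $s(t)$ is itself part of the unknown. I expect this regularity bookkeeping to be the main (and only) subtlety. One clean way to bypass it is the rescaling $\xi = x/s(t)$, which flattens the cylinder to the fixed strip $[0,1] \times (0,\bar t)$ at the cost of a transport correction $-(\dot s(t)/s(t))\xi v_\xi$ whose sign does not interfere with either principle, so the classical statements on a fixed domain apply directly. Alternatively, one can simply cite the Stefan-tailored versions of the maximum principle and Hopf's lemma in \cite{Gupta03}, which is precisely the route already used in the proof of Lemma \ref{lemma1}.
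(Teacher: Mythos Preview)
Your proposal is correct and matches the paper's approach exactly: the paper does not give a detailed argument but simply states that, ``similarly to Lemma \ref{lemma1}, the proof of Lemma \ref{lemma3} is based on Maximum principle and Hopf's Lemma,'' citing \cite{nonlinearPDE}. Your write-up is a faithful expansion of that sketch, and your parenthetical reading of the hypothesis as $T_{{\mathrm c}}(t)>T_{{\mathrm m}}$ is the physically consistent one (the paper's notation is slightly loose here, effectively writing the constraint for the shifted control).
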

Similarly to Lemma \ref{lemma1}, the proof of Lemma \ref{lemma3} is based on  Maximum principle and Hopf's Lemma \cite{nonlinearPDE}. Therefore, the  following physical constraint holds
\begin{align}\label{tempconst}
T_{c}(t)>0, \quad \forall t>0.
\end{align}
\subsection{Setpoint restriction}
For boundary temperature control, the energy-like conservation law is described as 
\begin{align}\label{tempconserv}
\frac{d}{dt} \left(\frac{1}{\alpha} \int_{0}^{s(t)} x (T(x,t)-T_{{\mathrm m}}) dx + \frac{1}{2\beta} s(t)^2 \right) = T_{c}(t)
\end{align}
Considering the same control objective as in Section \ref{open}, taking the limit of \eqref{tempconserv} from $0$ to $\infty$ yields 
\begin{align}\label{temp1stlawinteg}
\frac{1}{2\beta} (s_{{\mathrm r}}^2 - s_0^2) - \frac{1}{\alpha} \int_0^{s_0} x (T_0(x) - T_{{\mathrm m}}) dx = \int_0^{\infty} T_{c}(t) dt
\end{align}
Hence, by imposing the physical constraint \eqref{tempconst}, the following assumption on the setpoint is required. 
\begin{assum}\label{tempsetpoint}
The setpoint $s_{{\mathrm r}}$ is chosen to satisfy 
\begin{align}\label{tempcompatibility}
s_{{\mathrm r}}>\sqrt{ s_0^2+\frac{2 \beta}{\alpha }\int_{0}^{s_0} x(T_0(x)-T_{{\mathrm m}}) dx}. 
\end{align}
\end{assum}
Again, Assumption \ref{tempsetpoint} stands as a least restrictive condition for the choice of setpoint, and the open-loop stabilization is presented in the following lemma. 
\begin{lem}\label{rec}
Consider an open-loop setpoint control law $T^{\star}_{{\mathrm c}}(t)$ which satisfies \eqref{temp1stlawinteg}.  Then, the control objectives \eqref{c1} and \eqref{c2} are satsified. \end{lem}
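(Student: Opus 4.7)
The plan is to mirror the proof of the Neumann-case Lemma \ref{rec*}, exploiting the observation that the integrated energy identity \eqref{temp1stlawinteg} is precisely the balance-law compatibility condition between the initial state and the desired equilibrium $(T_m,s_r)$. The two ingredients I would use are (i) monotonicity and positivity from Lemma \ref{lemma3} to give well-posed, bounded dynamics, and (ii) energy matching to pin down the asymptotic state.

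First, I would integrate the pointwise energy identity \eqref{tempconserv} in time, obtaining for every $t\geq 0$
\begin{align*}
\frac{1}{\alpha}\int_0^{s(t)} x\bigl(T(x,t)-T_{{\mathrm m}}\bigr)\,dx + \frac{1}{2\beta}s(t)^2 = E_0 + \int_0^t T_{{\mathrm c}}^\star(\tau)\,d\tau,
\end{align*}
where $E_0$ denotes the initial value of the left-hand side. An admissible open-loop law can be chosen non-negative (for example the rectangular-pulse analog of \eqref{pulse} for the Dirichlet setting), so Lemma \ref{lemma3} gives $T(x,t)\geq T_{{\mathrm m}}$ and $s(t)$ monotonically non-decreasing. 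Both terms on the left-hand side are therefore non-negative, and by hypothesis \eqref{temp1stlawinteg} the right-hand side remains bounded as $t\to\infty$, so $s(t)$ is bounded; by monotonicity it converges to some $s_\infty\leq s_{{\mathrm r}}$.

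Second, the finite value of $\int_0^\infty T_{{\mathrm c}}^\star\,d\tau$ forces the boundary forcing to deactivate asymptotically, after which the Stefan PDE \eqref{PDEtemp}--\eqref{ODEtemp} relaxes on the eventually-fixed domain $[0,s_\infty]$ to its quiescent steady state compatible with the condition $T(s_\infty,t)=T_{{\mathrm m}}$, namely $T\equiv T_{{\mathrm m}}$, which yields \eqref{c2}. Substituting this limit back into the total-energy identity collapses it to $\frac{1}{2\beta}s_\infty^2=\frac{1}{2\beta}s_{{\mathrm r}}^2$, giving $s_\infty=s_{{\mathrm r}}$ and hence \eqref{c1}. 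The delicate step is rigorously justifying the relaxation on the time-varying domain: this can be handled by standard parabolic maximum-principle arguments as in \cite{Gupta03} applied to the non-negative variable $u=T-T_{{\mathrm m}}$, noting that the weighted spatial $L^1$-norm $\int_0^{s(t)} x\,u(x,t)\,dx$ is already known from the energy identity to have the correct limit, which together with compactness pins $u\to 0$; once this relaxation is secured, the energy-matching step is purely algebraic.
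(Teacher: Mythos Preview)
Your proposal is correct and takes the same approach as the paper: the paper does not give an explicit proof of this lemma (nor of its Neumann analog, Lemma~\ref{rec*}), but simply asserts that the result follows straightforwardly from the integrated energy identity \eqref{temp1stlawinteg}, which is precisely the backbone of your argument. Your write-up merely supplies the details the paper omits---monotonicity and boundedness of $s(t)$ via Lemma~\ref{lemma3} and the energy balance, followed by parabolic relaxation to equilibrium and energy matching to identify $s_\infty=s_{{\mathrm r}}$.
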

Hence, the similar rectangular pulse of energy shaping for temperature control as in Section \ref{sec:pulse} can be derived as 
\begin{align}\label{pulsetemp}
T^{\star}_{{\mathrm c}}(t) = 
\left \{
\begin{array}{cc}
\bar{T} &{\rm for}\quad t\in [0, \Delta E/\bar{T}]\\
0  &{\rm for}\quad t>k\Delta E/\bar{q}
\end{array}
\right \},
\end{align}
 where 
 \begin{align}\label{1stlawintegopentemp}
\Delta E= \frac{1}{2\beta}(s_{{\mathrm r}}^2-s_0^2)-\frac{1}{\alpha}\int_{0}^{s_0} x(T_0(x)-T_{{\mathrm m}}) dx.
\end{align}
\subsection{State feedback controller design}
Firstly, we suppose that the physical parameters are known and  state the following theorem.
\begin{thm}\label{thm3}
Consider a closed-loop system consisting of the plant \eqref{PDEtemp}--\eqref{ODEtemp} and the control law
\begin{align}\label{tempcontrol}
T_{{\mathrm c}}(t)=-c \left(\frac{1}{\alpha} \int_{0}^{s(t)} x\left(T(x,t)-T_{{\mathrm m}}\right) dx\right.\notag \\
\left. +\frac{1}{\beta}s(t) \left(s(t)-s_{{\mathrm r}}\right)\right), 
\end{align}
where $c>0$ is the controller gain. Assume that the initial conditions $(T_0(x), s_0)$ are compatible with the control law and satisfies \eqref{eq:stefanICbound}. Then, for any reference setpoint  $s_{{\mathrm r}} $ and control gain $c$ which satisfy 
\begin{align}\label{temprestriction}
s_{{\mathrm r}} >& s_0 + \frac{\beta}{\alpha} \int_{0}^{s_0} \frac{x}{s_0}u_0(x) dx, \\
\label{tempgainrest}c \leq& \frac{\alpha}{2\sqrt{2}s_{{\mathrm r}} }, 
\end{align}
respectively, the closed-loop system is exponentially stable in the sense of the norm \eqref{H1norm}.
\end{thm}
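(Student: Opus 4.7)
The plan is to mirror the four-step structure of the proof of Theorem~\ref{Theo-1}, adapting each step to the Dirichlet boundary. First, I would introduce the reference errors $u(x,t) = T(x,t) - T_{\mathrm m}$ and $X(t) = s(t) - s_{\mathrm r}$, yielding a $u$-system with Dirichlet control $u(0,t) = T_{\mathrm c}(t) - T_{\mathrm m}$, the homogeneous condition $u(s(t),t) = 0$, and $\dot X = -\beta u_x(s(t),t)$. The fact that \eqref{tempcontrol} differs from \eqref{Fullcontrol} by an extra $x$ inside the integral and an extra $s(t)$ multiplying $(s(t)-s_{\mathrm r})$ is exactly the signature of a transformation whose target must satisfy Dirichlet conditions at \emph{both} endpoints rather than a Neumann condition at $x=0$.

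Second, I would try the same affine backstepping transformation
\begin{align*}
w(x,t) = u(x,t) - \frac{c}{\alpha}\int_{x}^{s(t)} (x-y)\, u(y,t)\, dy - \frac{c}{\beta}(x - s(t))\, X(t)
\end{align*}
as in \eqref{eq:DBST}. A direct substitution shows that it maps the $u$-system into the target
\begin{align*}
w_t &= \alpha w_{xx} + \frac{c}{\beta}\dot s(t) X(t), \quad w(0,t) = w(s(t),t) = 0, \\
\dot X(t) &= -cX(t) - \beta w_x(s(t),t),
\end{align*}
with the identity $w(0,t)=0$ producing precisely the control law \eqref{tempcontrol}. An inverse kernel can then be constructed by the same procedure as in Appendix~\ref{appIBST}, so $(u,X)$ and $(w,X)$ share the same stability properties.

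Third, I need to verify the physical constraints $T_{\mathrm c}(t) > 0$ and $s(t) < s_{\mathrm r}$ required by Lemma~\ref{lemma3}. This is the first substantive deviation from the Neumann proof: differentiating \eqref{tempcontrol} along the dynamics, using $u(s(t),t)=0$ and integration by parts, yields
\begin{align*}
\dot T_{\mathrm c}(t) = -c\, T_{\mathrm c}(t) + c\, X(t)\, u_x(s(t),t),
\end{align*}
which is not autonomous in $T_{\mathrm c}$. I would then argue by contradiction: assuming $t_1$ is the first zero of $T_{\mathrm c}$, Lemma~\ref{lemma3} and Hopf's lemma give $u_x(s,t) < 0$ on $[0,t_1)$, and if $X(t^{\star})=0$ for some $t^{\star} \in [0,t_1)$ then evaluating \eqref{tempcontrol} at $t^{\star}$ would force $T_{\mathrm c}(t^{\star}) < 0$; since $X(0) = s_0 - s_{\mathrm r} < 0$, continuity rules out $X$ reaching zero, so $X u_x(s,t) > 0$ throughout $[0,t_1)$. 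Comparison then yields $T_{\mathrm c}(t_1) \geq T_{\mathrm c}(0)\, e^{-ct_1} > 0$, with $T_{\mathrm c}(0)>0$ assured by \eqref{temprestriction}, a contradiction.

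Fourth, I would take the Lyapunov candidate $V = \frac{1}{2}\|w\|_{{\cal H}_1}^2 + \frac{p}{2} X(t)^2$ and compute $\dot V$ along the target system, invoking Young's, Cauchy--Schwarz, Poincar\'e's, and Agmon's inequalities together with $\dot s \geq 0$ and $s_0 < s(t) < s_{\mathrm r}$ from step three. The hard part is that with $w$ now Dirichlet at both ends, integrating $w_{xx} w_t$ leaves a boundary trace $w_x(0,t)$ with no boundary condition available to eliminate it, while the coupling $-p\beta X w_x(s(t),t)$ coming from $\dot X$ must also be absorbed. I expect the gain restriction $c \leq \frac{\alpha}{2\sqrt{2}\, s_{\mathrm r}}$ to arise exactly when choosing $p$ and the Young's-inequality weights so that the dissipative contributions $-\alpha \|w_x\|_{{\cal L}_2}^2$, $-\alpha\|w_{xx}\|_{{\cal L}_2}^2$, and $-pc X^2$ dominate those boundary cross-terms. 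Once $\dot V \leq -bV + a\dot s V$ is secured, the rescaling $W = V e^{-a s(t)}$ used around \eqref{eq:lyap} extracts exponential decay of $V$, and the direct and inverse transformations transfer this decay to the norm \eqref{H1norm}.
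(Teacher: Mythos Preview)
Your proposal is correct and follows essentially the same route as the paper: the same backstepping transformation \eqref{eq:DBST} yielding the Dirichlet--Dirichlet target system, the same contradiction argument for the physical constraints (the paper writes $\dot T_{\mathrm c}=-cT_{\mathrm c}-\tfrac{c}{\beta}\dot s\,X$, which is your formula after substituting $\dot s=-\beta u_x(s,t)$, and then reads off $X<0$ directly from \eqref{tempcontrol} rather than via your continuity argument), and the same Lyapunov strategy culminating in $\dot V\le -bV+a\dot s\,V$. The one technical difference is that the paper takes the \emph{weighted} functional $V=\tfrac{d}{2}\|w\|_{{\cal L}_2}^2+\tfrac{1}{2}\|w_x\|_{{\cal L}_2}^2+\tfrac{p}{2}X^2$ with an extra free parameter $d$, and it is in balancing this estimate that the coefficient $-(\tfrac{\alpha}{2}-\sqrt{2}\,c\,s_{\mathrm r})\|w_{xx}\|_{{\cal L}_2}^2$ appears and produces the gain restriction \eqref{tempgainrest}; your unweighted $\tfrac{1}{2}\|w\|_{{\cal H}_1}^2$ candidate would need this same extra degree of freedom to close.
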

\begin{proof}
The backstepping  transformation \eqref{eq:DBST} leads to the following target system \begin{align}\label{tarTPDE}
w_t(x,t)=&\alpha w_{xx}(x,t)+\frac{c}{\beta}\dot{s}(t) X(t), \\
\label{tarTBC1}
w(s(t),t) =& 0,\\
\label{tarTBC2}w(0,t) =& 0, \\
\label{tarTODE}\dot{X}(t)=&-cX(t)-\beta w_x(s(t),t)
\end{align}
and the control law \eqref{tempcontrol}. Next, we show that the physical constraints \eqref{tempconst} and \eqref{constraint2} are insured if \eqref{temprestriction} holds. 
Taking a time derivative of \eqref{tempcontrol}, we have 
\begin{align}\label{tempdif}
\dot{T}_{{\mathrm c}}(t)=-c T_{{\mathrm c}}(t)-\frac{c}{\beta}\dot{s}(t)X(t)
\end{align}
Assume that $\exists t_2$ such that $T_{{\mathrm c}}(t)>0$ for $\forall t\in(0,t_2)$ and $T_{{\mathrm c}}(t_2) = 0$. Then, by maximum principle, we get  $u(x,t)>0$ and $\dot{s}(t)>0$ for $\forall t\in(0,t_2)$. Hence, $s(t)>s_0>0$. From \eqref{tempcontrol}, the following equality is deduced 
\begin{align}
X(t)=-\frac{\beta}{cs(t)}\left(T_{{\mathrm c}}(t)+\frac{c}{\alpha} \int_{0}^{s(t)} xu(x,t) dx\right).
\end{align}
Thus, $X(t)<0$ for $\forall t\in(0,t_2)$ and  \eqref{tempdif} verifies a differential inequality 
\begin{align}
\dot{T}_{{\mathrm c}}(t)> -c T_{{\mathrm c}}(t), \quad \forall t\in(0,t_2).
\end{align}
Comparison principle and \eqref{temprestriction} yield $T_{{\mathrm c}}(t_2) > T_{{\mathrm c}}(0) e^{-c t_2}>0$ in contradiction to  $T_{{\mathrm c}}(t_2) = 0$. Therefore, $\nexists t_2$ such that $T_{{\mathrm c}}(t)>0$ for $\forall t\in(0,t_2)$ and $T_{{\mathrm c}}(t_2) = 0$, which implies $T_{{\mathrm c}}(t)>0$ for $\forall t>0$ assuming \eqref{temprestriction}. 
Finally, we consider a functional 
\begin{align}\label{V3}
V = \frac{d}{2}||w||_{{\cal L}_2}^2 + \frac{1}{2}||w_{x}||_{{\cal L}_2}^2 + \frac{p}{2} X(t)^2. 
\end{align}
With an appropriate choice of positive parameters $d$ and $p$, time derivative of \eqref{V3} yields 
\begin{align}
\dot{V} \leq& - \left(\frac{\alpha}{2} -\sqrt{2} cs_{{\mathrm r}} \right) || w_{xx}||^2- \frac{d\alpha}{2(4s_{{\mathrm r}}^2 + 1)}  ||w||_{{\cal H}_1}^2 \notag\\
&- \frac{\alpha c^2}{4\beta^2} X(t)^2+\dot{s}(t) \left(\frac{c^2}{\beta^2} X(t)^2 + \frac{d^2 s_{{\mathrm r}}^2}{2}||w||^2 \right). 
\end{align}
Therefore, choosing the controller gain to satisfy \eqref{tempgainrest}, it can be verified that there exists positive parameters $b$ and $a$ such that 
\begin{align}
\dot{V} \leq &- b V + a \dot{s}(t) V. 
\end{align}
Similarly, in the Neumann boundary actuation case, under the physical constraint \eqref{tempconst}, the exponential stability of the target system \eqref{tarTPDE}-\eqref{tarTODE} can be established, which completes the proof of Theorem \ref{thm3}. 
\end{proof}

\subsection{Robustness to parameters' uncertainty}
Next, we investigate robustness of the boundary temperature controller  \eqref{tempcontrol} to perturbations on the plant's physical parameters $\alpha $ and $\beta$. Again, the perturbed system is described as 
\begin{align}
 \label{PDEtemp-robust}
 T_{t}(x,t) =& \alpha(1+\ep_1) T_{xx}(x,t), \quad 0\le x\le s(t), \\
\label{BCtemp-robust} T(0,t) =& T_{{\mathrm c}}(t), \\
\label{BC2temp-robust}T(s(t),t) =& T_{{\mathrm m}}, \\
\label{ODEtemp-robust}\dot{s}(t) =& - \beta(1+\ep_2) T_{x}(s(t),t)
\end{align}
where $\varepsilon_1$ and $\varepsilon_2$ are perturbation parameters such that $\varepsilon_1>-1$ and $\varepsilon_2>-1$.
\begin{thm}\label{robust-temp}
Consider a closed-loop system \eqref{PDEtemp-robust}--\eqref{ODEtemp-robust}, and the control law \eqref{tempcontrol}. Then, for any perturbations $(\ep_1, \ep_2)$ which satisfy $\ep_1\ge \ep_2$, there exists $c^*$ such that for $0<\forall c<c^*$ the closed-loop system is exponentially stable in the norm \eqref{H1norm}.
\end{thm}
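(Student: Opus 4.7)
The plan is to follow the structure of Theorem \ref{robust-thm} (Neumann robustness) combined with the framework of Theorem \ref{thm3} (unperturbed Dirichlet). I first apply the backstepping transformation \eqref{eq:DBST} to the perturbed plant \eqref{PDEtemp-robust}--\eqref{ODEtemp-robust}. By the same derivation that produced \eqref{eq:tarrobustPDE}--\eqref{eq:tarrobustODE}, but with the Dirichlet boundary replacing the Neumann one at $x=0$, this should yield a target system of the form
\begin{align}
w_t(x,t) &= \alpha(1+\varepsilon_1) w_{xx}(x,t) + \tfrac{c}{\beta}\dot{s}(t) X(t) + \tfrac{c}{\beta}\tfrac{\varepsilon_1-\varepsilon_2}{1+\varepsilon_2}\dot{s}(t)(x-s(t)), \notag \\
w(0,t) &= 0, \quad w(s(t),t) = 0, \notag \\
\dot{X}(t) &= -c(1+\varepsilon_2) X(t) - \beta(1+\varepsilon_2) w_x(s(t),t). \notag
\end{align}

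Next I would verify the physical constraints \eqref{tempconst} and \eqref{constraint2} via a contradiction argument. Differentiating \eqref{tempcontrol} along \eqref{PDEtemp-robust}--\eqref{ODEtemp-robust}, integrating by parts, and using $u(0,t)=T_{{\mathrm c}}(t)$ and $u(s(t),t)=0$ gives
\begin{align}
\dot{T}_{{\mathrm c}}(t) = -c(1+\varepsilon_1) T_{{\mathrm c}}(t) + c\, u_x(s(t),t)\bigl[s(t)(\varepsilon_2-\varepsilon_1) + (1+\varepsilon_2) X(t)\bigr]. \notag
\end{align}
Suppose there exists $t_2>0$ with $T_{{\mathrm c}}(t)>0$ on $(0,t_2)$ and $T_{{\mathrm c}}(t_2)=0$; Lemma \ref{lemma3} yields $u(x,t)>0$ and Hopf's lemma gives $u_x(s(t),t)<0$, while the control law \eqref{tempcontrol} then forces $X(t)<0$ on this interval. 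Under the hypothesis $\varepsilon_1\ge \varepsilon_2$, the bracket is non-positive, so the product with $u_x(s(t),t)<0$ is non-negative, producing $\dot{T}_{{\mathrm c}}(t) > -c(1+\varepsilon_1) T_{{\mathrm c}}(t)$. Comparison principle then gives $T_{{\mathrm c}}(t_2) > T_{{\mathrm c}}(0) e^{-c(1+\varepsilon_1)t_2} > 0$, contradicting $T_{{\mathrm c}}(t_2)=0$, so \eqref{tempconst} holds and \eqref{constraint2} follows.

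Third, I carry out a Lyapunov analysis on a weighted functional of the form
\begin{align}
V_\varepsilon = \tfrac{d}{2}\|w\|_{{\cal L}_2}^2 + \tfrac{1}{2}\|w_x\|_{{\cal L}_2}^2 + \tfrac{p}{2} X(t)^2, \notag
\end{align}
with $d,p>0$ chosen as explicit functions of $(\varepsilon_1,\varepsilon_2)$ as in \eqref{robfuct}, the weight $d$ scaling with $(\varepsilon_1-\varepsilon_2)^2$ to absorb the mismatch transport term. Differentiating $V_\varepsilon$ along the perturbed target system and invoking Young's, Cauchy--Schwarz, Poincare's and Agmon's inequalities as in \eqref{robustestimate} and \eqref{V3} should produce negative-definite contributions in $\|w_x\|_{{\cal L}_2}^2$, $\|w_{xx}\|_{{\cal L}_2}^2$ and $X^2$, a positive $cs_{{\mathrm r}}\|w_{xx}\|_{{\cal L}_2}^2$ contribution (the Dirichlet analogue of the obstruction that led to \eqref{tempgainrest}), and a $\dot{s}(t)V_\varepsilon$ term. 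Requiring that every net coefficient remain negative defines the threshold $c^*(\varepsilon_1,\varepsilon_2)>0$; for $0<c<c^*$ one obtains
\begin{align}
\dot{V}_\varepsilon \le -b V_\varepsilon + a\dot{s}(t) V_\varepsilon \notag
\end{align}
with $a,b>0$. The $\dot{s}(t)V_\varepsilon$ term is removed via $W = V_\varepsilon e^{-as(t)}$ exactly as in \eqref{eq:lyap}--\eqref{dotV}, and the invertibility of \eqref{eq:DBST} transfers exponential stability from the target to the $(u,X)$-system in the norm \eqref{H1norm}.

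The main obstacle will be pinning down $c^*$ explicitly: unlike the Neumann robustness proof, where the kernel $\phi$ and the controller gain $c$ appear essentially linearly, here the transport-type perturbation proportional to $(\varepsilon_1-\varepsilon_2)\dot{s}(t)(x-s(t))$ interacts with $\|w_{xx}\|_{{\cal L}_2}^2$ through the Neumann-like boundary term produced by integration by parts, and must be absorbed using Young's inequality with a weight that couples $d$, $c$ and $\varepsilon_1-\varepsilon_2$. Balancing this absorption against the unperturbed Dirichlet restriction $c \le \alpha/(2\sqrt{2}s_{{\mathrm r}})$ from \eqref{tempgainrest} is what ultimately determines how small $c^*$ must be as a function of the perturbation magnitudes.
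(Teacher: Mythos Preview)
Your proposal is correct and follows essentially the same route as the paper: the same perturbed target system with $w(0,t)=0$, the same contradiction argument for \eqref{tempconst} and \eqref{constraint2} (your derivative formula for $\dot T_{\mathrm c}$ is in fact the expanded form of the paper's, with $\dot s$ substituted), and the same weighted $V_\varepsilon$ leading to $\dot V_\varepsilon\le -bV_\varepsilon+a\dot s V_\varepsilon$. The only difference is that the paper carries the Lyapunov estimate through to an explicit inequality of the form \eqref{robustestimate} with constants $A,B$ and then defines $c^*$ as the positive root of $Ac^{*3}+Bc^*=1$, whereas you leave $c^*$ implicit; filling in that computation is exactly the ``balancing'' step you identify at the end.
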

\begin{proof}
Note that  the transformation \eqref{eq:DBST}--\eqref{ker}  and the system \eqref{eq:robustPDE}, \eqref{eq:robustBC}-\eqref{eq:robustODE} are identical to the ones considered in   Section \ref{sec:robust}. Moreover, only the boundary condition of the target system \eqref{eq:tarrobustPDE}--\eqref{eq:tarrobustODE} at $x=0$ is modified as $w(0,t) = 0$, in order to  match the temperature control problem.

Condition \eqref{constraint2} and \eqref{tempconst} need to be satisfied under  the parameter perturbations. Taking time derivative of \eqref{tempcontrol} along the system  \eqref{eq:robustPDE}, \eqref{eq:robustBC}-\eqref{eq:robustODE}, with the boundary condition \eqref{BCtemp}, we obtain 
\begin{align}
\dot{T}_c(t)=&-c (1+\varepsilon_1)  T_c(t) - \frac{c}{\beta} \dot{s}(t)X(t)\notag\\
&-c (\varepsilon_1 - \varepsilon_2) u_x(s(t),t).
\end{align}
The inequality  $\varepsilon_1>\varepsilon_2$ enables to state  the positivity of the controller $T_{c}(t)>0$. Hence, the physical constraints \eqref{tempconst} and \eqref{constraint2} are verified.
Finally, we introduce the following  functional
\begin{align}
V_{\ep} = \frac{d}{2} ||w||_{{\cal L}_1}^2 +  \frac{1}{2} ||w_{x}||_{{\cal L}_1}^2 + \frac{p}{2} X(t)^2. 
\end{align}
With an appropriate choice of $d$ and $p$, we have 
\begin{align}\label{robustestimate2}
\dot{V}_{\ep} \leq& - \frac{d\alpha (1+\ep_1)}{4}  ||w_{x}||_{{\cal L}_2}^2\notag\\
&- \frac{\alpha (1+\ep_1)}{8}  \left( 2 - Ac^3 \right) ||w_{xx}||_{{\cal L}_2}^2 \notag\\
&- \frac{c^2 \alpha (1+\ep_1)}{32\beta^2 s_r} \left( 2 - Ac^3 - Bc  \right)X(t)^2\notag\\
&+\dot{s}(t) \left\{ d^2 s_{{\mathrm r}}^2 || w||_{{\cal L}_2}^2+\frac{ c^2}{\beta ^2}  X(t)^2\right\}.
\end{align}
where $A = \frac{2^9\sqrt{2} s_r^6 (1+s_r) (\ep_1-\ep_2)^2}{3 \alpha^3 (1+\ep_1)^2 (1+\ep_2)}$, $B = \frac{16\sqrt{2} s_r^2 }{\alpha (1+\ep_2)}$. 
Let $c^*$ be a positive root of $A c^{*3}+Bc^*=1$. Then, for $0<\forall c<c^*$, there exists positive parameters $a$ and $b$ which verifies 
\begin{align}
\dot{V}_{\ep}\leq& -b V_{\ep} + a\dot{s}(t) V_{\ep}. 
\end{align}
Hence, it concludes Theorem \ref{robust-temp}. 
\end{proof}

\subsection{Observer Design with Boundary Temperature Controller}
With respect to the boundary temperature control introduced in Section \ref{sec:tempcontrol} instead of the heat control, the observer design is replaced by the following. 
\begin{coro}\label{obsvtemp}
Consider the following  closed-loop system  of the observer 
\begin{align}
 \label{observerPDEtemp}\hat{T}_t(x,t)=&\alpha \hat{T}_{xx}(x,t) \nonumber\\
 & - p_2(x,s(t))\left(\frac{\dot{Y}(t)}{\beta} + \hat{T}_x(s(t),t)\right),  \\
 \label{observerBC2temp}\hat{T}(0,t)=&T_{{\mathrm c}}(t), \\
\label{observerBC1temp}\hat{T}(s(t),t)=&T_{{\mathrm m}}, 
\end{align}
where $x\in[0,s(t)]$, and the observer gain $p_2(x,s(t))$ is 
\begin{align}\label{P2x}
p_2(x,s(t)) =  -\lambda x\frac{I_1\left(\sqrt{\frac{\lambda}{\alpha}\left(s(t)^2-x^2\right)}\right)}{\sqrt{\frac{\lambda}{\alpha} \left(s(t)^2-x^2\right)}}
\end{align}
with an observer gain $\lambda>0 $. Assume that the two physical constraints \eqref{tempconst} and \eqref{constraint2} are satisfied. Then, for all  $\lambda >0$, the observer error system is exponentially stable in the sense of the norm 
\begin{align}
||T-\hat{T}||_{{\cal H}_1}^2 . 
\end{align}
\end{coro}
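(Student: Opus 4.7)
The plan is to mimic the proof of Theorem \ref{observer} but adapted to the homogeneous Dirichlet boundary at $x=0$ that arises from both $T(0,t)$ and $\hat T(0,t)$ being driven by the common temperature input $T_{\mathrm c}(t)$. Setting $\tilde u = T-\hat T$, the estimation error obeys
\begin{align*}
\tilde u_t(x,t) &= \alpha\, \tilde u_{xx}(x,t) - p_2(x,s(t))\tilde u_x(s(t),t),\\
\tilde u(0,t) &= 0, \qquad \tilde u(s(t),t) = 0,
\end{align*}
where the telescoping of $\dot Y(t)/\beta + \hat T_x(s(t),t)$ into $-\tilde u_x(s(t),t)$ goes through exactly as in the Neumann case. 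The only structural difference from the $\tilde u$-system \eqref{eq:errorPDE}--\eqref{eq:errorBC2} is the left boundary, now Dirichlet rather than Neumann.

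I would then introduce the Volterra transformation
\[
\tilde u(x,t) = \tilde w(x,t) + \int_x^{s(t)} P_2(x,y)\, \tilde w(y,t)\, dy
\]
and target the exponentially stable PDE $\tilde w_t = \alpha \tilde w_{xx} - \lambda \tilde w$ with $\tilde w(0,t) = \tilde w(s(t),t) = 0$. Matching powers of $\tilde w$, $\tilde w_x$, and $\tilde w_{xx}$ as in the derivation of \eqref{p1gain} produces the same interior kernel equation $P_{2,yy}-P_{2,xx} = (\lambda/\alpha) P_2$ and the same diagonal condition $P_2(x,x) = \lambda x/(2\alpha)$, but now with the replacement of $P_{1,x}(0,y) = 0$ by the homogeneous condition $P_2(0,y) = 0$, which is precisely what forces $\tilde u(0,t)=0$ to imply $\tilde w(0,t)=0$. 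Direct substitution verifies that
\[
P_2(x,y) = \frac{\lambda x}{\alpha}\,\frac{I_1\!\left(\sqrt{(\lambda/\alpha)(y^2-x^2)}\right)}{\sqrt{(\lambda/\alpha)(y^2-x^2)}}
\]
solves this triangle problem: the explicit $x$ prefactor kills the left boundary, and the small-argument expansion $I_1(z)\sim z/2$ reproduces $\lambda x/(2\alpha)$ on the diagonal. Setting $y=s(t)$ and using $p_2(x,s(t)) = -\alpha P_2(x,s(t))$ recovers exactly \eqref{P2x}. An inverse transformation with kernel $Q_2(x,y) = (\lambda x/\alpha)J_1(\sqrt{(\lambda/\alpha)(y^2-x^2)})/\sqrt{(\lambda/\alpha)(y^2-x^2)}$ is constructed by the same Volterra argument as in Appendix \ref{appIBST}, so the $(\tilde u,s)$ and $(\tilde w,s)$ systems share $\mathcal H_1$ stability.

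For the stability step I would use $\tilde V = \tfrac12 \|\tilde w\|_{\mathcal H_1}^2$. Differentiating along the target PDE, integrating by parts twice, and using $\tilde w(0,t)=\tilde w(s(t),t)=0$ together with their consequences $\tilde w_{xx}(0,t)=0$ (from $\tilde w_t(0,t)=0$ and the PDE at the fixed end) and $\alpha \tilde w_{xx}(s(t),t) = -\dot s(t)\tilde w_x(s(t),t)$ (from differentiating the Dirichlet trace at the moving end) yields
\[
\dot{\tilde V} = -\alpha\bigl(\|\tilde w_x\|_{\mathcal L_2}^2 + \|\tilde w_{xx}\|_{\mathcal L_2}^2\bigr) - \lambda\|\tilde w\|_{\mathcal H_1}^2 - \tfrac{\dot s(t)}{2}\tilde w_x(s(t),t)^2.
\]
Under the assumed physical constraints \eqref{tempconst} and \eqref{constraint2}, Lemma \ref{lemma3} supplies $\dot s(t)\ge 0$ so the boundary term is harmless, and Poincare's inequality (now even sharper with homogeneous Dirichlet on both ends) combined with the bound $s(t) < s_{\mathrm r}$ delivers $\dot{\tilde V}\le -\bigl(\alpha/(4s_{\mathrm r}^2)+\lambda\bigr)\tilde V$, identical in form to the estimate obtained in Section \ref{sec:estimation}. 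Invertibility of the transformation then transfers the exponential decay to $\|T-\hat T\|_{\mathcal H_1}^2$. The main obstacle is the kernel construction: verifying that the single change of boundary condition (from $P_{1,x}(0,y)=0$ to $P_2(0,y)=0$) selects precisely the $x$-prefactored modified Bessel solution in \eqref{P2x} while keeping the diagonal condition and interior PDE intact. Once that is pinned down, everything else is a near-verbatim replay of the Neumann observer proof.
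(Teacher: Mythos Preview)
Your proposal is correct and follows essentially the same route as the paper, which simply states that the proof ``is established by the same procedure as in Section \ref{obsvtarget}.'' You have correctly identified the single structural change---the Dirichlet condition at $x=0$ forces $P_2(0,y)=0$ in place of $P_{1,x}(0,y)=0$, which selects the $x$-prefactored Bessel kernel \eqref{P2x}---and the remaining Lyapunov argument and invertibility step go through verbatim.
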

The proof of Corollary \ref{obsvtemp} is established by the same procedure as in Section \ref{obsvtarget}.

\subsection{Output feedback controller design}
For boundary temperature controller \eqref{tempcontrol}, the output feedback controller is designed using the state observer \eqref{observerPDEtemp}--\eqref{P2x} presented in Corollary \ref{obsvtemp}. The following corollary is stated for a controlled boundary temperature. 
\begin{coro}\label{outputthmtemp}
Consider the closed-loop system consisting of the plant \eqref{PDEtemp}--\eqref{ODEtemp}, the measurement $Y(t)=s(t)$, the observer \eqref{observerPDEtemp}-\eqref{observerBC1temp}, and the output feedback control law
\begin{align}\label{outctrtemp}
T_{{\mathrm c}}(t)=-c \left(\frac{1}{\alpha} \int_{0}^{Y(t)} x\left(\hat{T}(x,t)-T_{{\mathrm m}}\right) dx\right.\notag \\
\left. +\frac{1}{\beta}Y(t) \left(Y(t)-s_{{\mathrm r}}\right)\right). 
\end{align}
With  $c$, $\hat{T}_0(x)$, $\lambda$ satisfying \eqref{tempgainrest}, \eqref{restriction1}, and \eqref{restriction2}, respectively, and, the setpoint $s_{{\mathrm r}}$ satisfying  
\begin{align}
\label{restriction32} s_{{\mathrm r}}>&s_0+\frac{\beta s_0^2}{6\alpha } \hat{H}_{u}, 
\end{align}
the closed-loop system is exponentially stable in the sense of the norm \eqref{h1output}.
\end{coro}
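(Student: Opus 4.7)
\textbf{Proof proposal for Corollary \ref{outputthmtemp}.} The plan is to mirror the three-step structure used for the Neumann output-feedback result (Theorem \ref{outputthm}) and the state-feedback Dirichlet result (Theorem \ref{thm3}). First I would introduce the backstepping transformation analogous to \eqref{eq:observerDBST}, mapping the observer state $\hat{u} = \hat{T}-T_{\mathrm m}$ together with $X = s-s_{\mathrm r}$ into a new variable $\hat{w}$. Because actuation is now Dirichlet, I would require the target system to satisfy $\hat{w}(0,t)=0$ and $\hat{w}(s(t),t)=0$, in parallel with the switch from \eqref{eq:tarPDE}--\eqref{eq:tarODE} to \eqref{tarTPDE}--\eqref{tarTODE} made in Theorem \ref{thm3}. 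Evaluating the resulting transformation at $x=0$ should recover exactly the output feedback law \eqref{outctrtemp}. The kernel carries the observer gain $p_2$ from \eqref{P2x}, and a routine computation of $\hat{w}_t - \alpha \hat{w}_{xx}$ along the observer PDE \eqref{observerPDEtemp}--\eqref{observerBC1temp} should produce a target system of the form $\hat{w}_t = \alpha \hat{w}_{xx} + \frac{c}{\beta}\dot{s}(t)X(t) + \tilde{f}(x,s(t))\tilde{w}_x(s(t),t)$, together with $\dot{X} = -cX - \beta\hat{w}_x(s(t),t) - \beta \tilde{w}_x(s(t),t)$ and the decoupled error dynamics \eqref{wtilde1}--\eqref{wtilde3}.

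Second, I would verify the physical constraints \eqref{tempconst} and \eqref{constraint2} by a contradiction argument that combines the techniques of Proposition \ref{proposition} and Theorem \ref{thm3}. Differentiating \eqref{outctrtemp} along the observer dynamics yields a differential equation of the form $\dot{T}_{\mathrm c}(t) = -cT_{\mathrm c}(t) - \frac{c}{\beta}\dot{s}(t)X(t) + \big(1+\text{kernel integral}\big)\tilde{u}_x(s(t),t)$. By Lemma \ref{maximum}, restrictions \eqref{restriction1} and \eqref{restriction2} guarantee $\tilde{u}(x,t)<0$ and $\tilde{u}_x(s(t),t)>0$; by positivity of the kernel \eqref{P2x}, the correction term has a favorable sign. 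Assuming a first zero-crossing $T_{\mathrm c}(t_2)=0$ with $T_{\mathrm c}>0$ on $(0,t_2)$ forces $\dot{s}(t)>0$ by Lemma \ref{lemma3}, and the identity $X(t) = -\frac{\beta}{cs(t)}\big(T_{\mathrm c}(t) + \frac{c}{\alpha}\int_0^{s(t)} x\hat{u}(x,t)dx\big)$ (read off from \eqref{outctrtemp}) gives $X(t)<0$ on that interval, from which the comparison principle rules out the crossing whenever \eqref{restriction32} ensures $T_{\mathrm c}(0)>0$. The setpoint bound \eqref{restriction32} is the Dirichlet analog of \eqref{restriction3}, where the factor $1/6$ instead of $1/2$ arises because the initial integral of $x(\hat{T}_0-T_{\mathrm m})$ under the upper envelope in \eqref{restriction1} evaluates to $\hat{H}_u s_0^3/6$.

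Third, I would carry out the Lyapunov stability analysis using the composite functional
\begin{equation*}
\hat{V} = \frac{d}{2}\|\hat{w}\|_{{\cal L}_2}^2 + \frac{1}{2}\|\hat{w}_x\|_{{\cal L}_2}^2 + \frac{p}{2}X(t)^2 + e\tilde{V},
\end{equation*}
with $\tilde{V}$ as in \eqref{eq:lyapunov}, the weighting $d$ chosen in line with \eqref{V3} and $e$ taken sufficiently large so that the exponentially decaying error energy dominates the cross term $\tilde{f}(x,s(t))\tilde{w}_x(s(t),t)$ and the trace term $-\beta\tilde{w}_x(s(t),t)X(t)$ after Young's inequality. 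The gain restriction \eqref{tempgainrest} plays the same role as in Theorem \ref{thm3}, absorbing $\sqrt{2}cs_{\mathrm r}\|\hat{w}_{xx}\|^2$ into the dissipation $\frac{\alpha}{2}\|\hat{w}_{xx}\|^2$. Applying Young's, Cauchy-Schwarz, Poincar\'e's, and Agmon's inequalities, together with \eqref{tempconst} and \eqref{constraint2} exactly as in \eqref{estimatelyap}, should yield $\dot{\hat V} \leq -b\hat V + a\dot s(t)\hat V$ for appropriate positive $a,b$. The substitution $W=\hat{V}e^{-as(t)}$ and the a priori bound $s(t)<s_{\mathrm r}$ then deliver exponential decay of $\hat V$, and invertibility of the backstepping transformations (both \eqref{inverseerror} and the inverse of the new $(\hat u,X)\to(\hat w,X)$ map, obtained as in \eqref{whatinv}) transfers this decay to the norm \eqref{h1output}.

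The main obstacle I anticipate is the simultaneous compatibility of the three smallness conditions \eqref{tempgainrest}, \eqref{restriction2}, and \eqref{restriction32}. The gain $c$ must be small enough to tame $\sqrt{2}cs_{\mathrm r}\|\hat w_{xx}\|^2$, while $\lambda$ must be small enough to preserve the sign of $\tilde u$ yet large enough so that the error decay rate $\alpha/(4s_{\mathrm r}^2)+\lambda$ dominates the coupling trace $\tilde w_x(s(t),t)^2$. Verifying that a single choice of the weights $d,p,e$ makes the Lyapunov estimate strictly negative under just these three restrictions, without introducing any further implicit constraint, is the principal technical hurdle; all remaining steps are direct transcriptions of the Neumann arguments.
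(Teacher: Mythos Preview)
Your proposal is correct and follows essentially the same route the paper indicates: the paper merely states that the proof of Corollary \ref{outputthmtemp} is obtained by repeating the steps of Section \ref{sec:output} with the Dirichlet modifications of Theorem \ref{thm3} and Corollary \ref{obsvtemp}, and you have fleshed out precisely that program, including the correct origin of the factor $1/6$ in \eqref{restriction32}. One small correction: the decoupled error target system is not literally \eqref{wtilde1}--\eqref{wtilde3} but its Dirichlet counterpart with $\tilde w(0,t)=0$ in place of $\tilde w_x(0,t)=0$ (this is why the gain is $p_2$ rather than $p_1$), and Lemma \ref{maximum} must likewise be re-derived with the Dirichlet error kernel; the arguments and the resulting restriction \eqref{restriction2} carry over with only cosmetic changes.
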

The proof of Corollary \ref{outputthmtemp} can be established following the steps of the proof Corollary \ref{obsvtemp} detailed in Section \ref{sec:output}.
\section{Numerical Simulation}\label{simu}
Simulation results  in the case of the Neumann boundary actuation developed  in Part I are performed considering a strip of zinc as in \cite{maidi2014} whose physical properties are given in Table 1 (The consistent simulations to  illustrate  the feasibility of the backstepping controller with Dirichlet boundary actuation are easily achievable  but due to space limitation, these are not provided). Here, we use the well known boundary immobilization method combined with finite difference semi-discretization. The setpoint and the initial values are chosen as $s_{{\mathrm r}}$ = 0.35 m, $s_0$ = 0.01 m, and $T_0(x)-T_{{\mathrm m}}=H(s_{0}-x)$ with $H$ = 10000 K$\cdot {\rm m}^{-1}$. Then, the setpint restriction \eqref{compatibility} is satisfied. 

\subsection{State Feedback Control and its Robustness}
\subsubsection{Comparison of the pulse input and the backstepping control law}
Fig. \ref{fig:Robust1} shows the responses of the plant \eqref{eq:robustPDE}--\eqref{eq:robustODE} with the  open-loop pulse input \eqref{pulse} (dashed line) and the backstepping control law \eqref{Fullcontrol} (solid line). The time window of the open-loop pulse input is set to be 50 min. The gain of the backstepping control law is chosen as $c$=0.001 to be small enough to avoid numerical instabilities. Fig. \ref{fig:Robust1} (a) shows the response of $s(t)$ without the parameters perturbations, i.e. $(\varepsilon_1, \varepsilon_2)=(0,0)$ and clearly demonstrates that $s(t)$ converges to $s_{{\mathrm r}}$ applying both rectangular  pulse input and backstepping control law, and the convergence speed with the backstepping control is faster. Moreover, from the dynamics of $s(t)$ under parameters perturbations $(\varepsilon_1, \varepsilon_2)=(0.3,-0.2)$ shown in Fig. \ref{fig:Robust1} (b), it can be seen that the convergence of $s(t)$ to $s_{{\mathrm r}}$ is successfully achieved  with the backstepping control law but  not obtained  with the pulse input. On both Fig. \ref{fig:Robust1} (a) and (b), the responses with the backstepping control law show that the interface position  converge faster without the overshoot beyond the setpoint, i.e. $\dot{s}(t)>0$ and $s_0<s(t)<s_{{\mathrm r}}$ for $\forall t>0$. 

\subsubsection{Validity of the physical constraints}
The dynamics  of the controller $q_{{\mathrm c}}(t)$ and the temperature at the initial interface $T(s_0,t)$ with the backstepping control law \eqref{Fullcontrol} are described in Fig. \ref{fig:Robust2} (a) and (b), respectively,  for the system without  parameter's uncertainties, i.e., $(\varepsilon_1, \varepsilon_2)=(0,0)$ (red) and the system with parameters' mismatch  $(\varepsilon_1, \varepsilon_2)=(0.3,-0.2)$ (blue). As presented in Fig. \ref{fig:Robust2} (a), the boundary heat controller $q_{{\mathrm c}}(t)$ remains positive, i.e. $q_{{\mathrm c}}(t)>0$ in both cases. Moreover,  Fig. \ref{fig:Robust2} (b) shows that $T(s_0,t)$ converges to $T_{{\mathrm m}}$ with $T(s_0,t)>T_{{\mathrm m}}$ for both the system with accurate parameters and the system with uncertainties on the parameters. Physically,   Fig. \ref{fig:Robust2} (b) means that the temperature at the initial interface warms up from the melting temperature $T_{{\mathrm m}}$, to enable  melting  the solid-phase to the setpoint $s_{{\mathrm r}}$ before settling back  to $T_{{\mathrm m}}$. These physical constraints and phenomena holds even with the parameter uncertainty as long as  \eqref{Se} is satisfied. Therefore, the numerical results are consistent with our theoretical result. 

	\begin{table}[t]
\caption{Physical properties of zinc}
\begin{center}
    \begin{tabular}{| l | l | l | }
    \hline
    $\textbf{Description}$ & $\textbf{Symbol}$ & $\textbf{Value}$ \\ \hline
    Density & $\rho$ & 6570 ${\rm kg}\cdot {\rm m}^{-3}$\\ 
    Latent heat of fusion & $\Delta H^*$ & 111,961${\rm J}\cdot {\rm kg}^{-1}$ \\ 
    Heat Capacity & $C_p$ & 389.5687 ${\rm J} \cdot {\rm kg}^{-1}\cdot {\rm K}^{-1}$  \\  
    Thermal conductivity & $k$ & 116 ${\rm w}\cdot {\rm m}^{-1}$  \\ \hline
    \end{tabular}
\end{center}
\end{table}

\subsection{Observer Design and Output Feedback Control}

The initial estimation of the temperature profile is set to $\hat{T}_0(x)-T_{{\mathrm m}}=\hat{H}(s_{0}-x)$ with $\hat{H}$ = 1000 K$\cdot {\rm m}^{-1}$, and the observer gain  is chosen as $\lambda$ = 0.001. Then, the restriction on $\hat{T}_0(x)$, $\lambda$, and $s_{{\mathrm r}}$ described in \eqref{restriction1}-\eqref{restriction3} are satisfied, which are conditions for Theorem \ref{observer} and Theorem \ref{outputthm} to remain valid. 

The dynamics of the moving interface $s(t)$, the output feedback controller $q_{{\mathrm c}}(t)$, and the temperature at the initial interface $T(s_0,t)$ are depicted in Fig. \ref{fig:simulation} (a)--(c), respectively. Fig. \ref{fig:simulation} (a) shows that the interface $s(t)$ converges to the setpoint $s_{{\mathrm r}}$ without overshoot which is guaranteed in Proposition \ref{proposition}.  Fig. \ref{fig:simulation} (b) shows that the output feedback controller remains positive, which is a physical constraint for the model to be valid as stated in Lemma \ref{lemma1} and ensured in Proposition \ref{proposition}. The model validity can be seen in Fig. \ref{fig:simulation} (c) which illustrates $T(s_0,t)$ increases from the melting temperature $T_{{\mathrm m}}$  to enable melting of material and settles back to its equilibrium. The positivity of the backstepping  controller shown in Fig. \ref{fig:simulation} (b), results from the negativity of the distributed estimation error $\tilde{T}(x,t)$ as shown in Lemma \ref{maximum} and Proposition \ref{proposition}. Fig. \ref{fig:simulation} (d) shows the dynamics of estimation errors of distributed temperature at $x=0$ (red), $x=s(t)/4$ (blue), and $x=s(t)/2$ (green), respectively. It is remarkable that the estimation errors at each point converge to zero and remains negative confirming  the theoretical results stated in Lemma \ref{maximum} and Proposition \ref{proposition}. 

\begin{figure*}[!ht]
\centering
\subfloat[with accurate parameters $(\epsilon_1, \epsilon_2) = (0,0)$ ]{\includegraphics[width=3.0in]{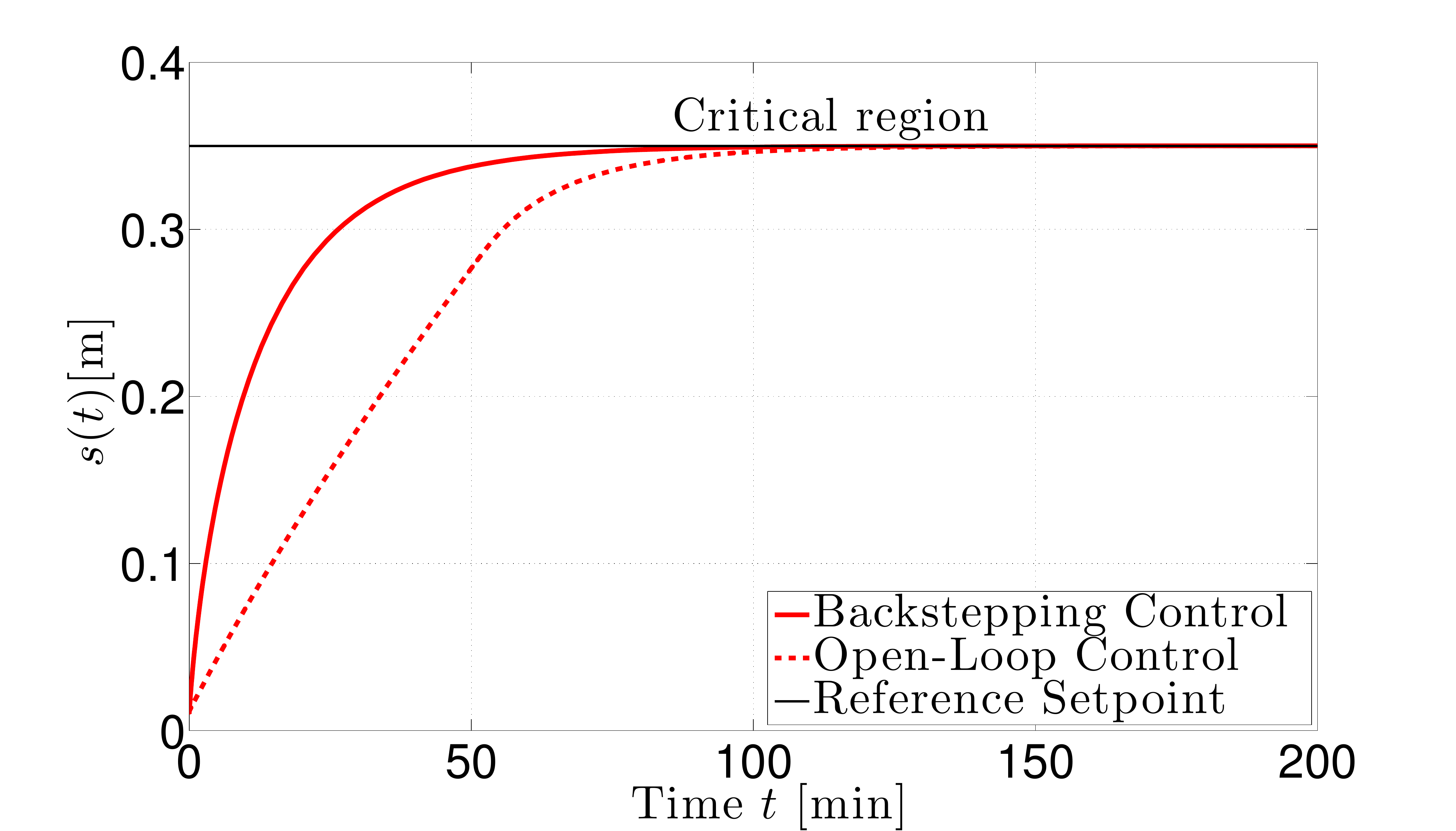}}\label{6a}
\subfloat[under parameters perturbation $(\epsilon_1, \epsilon_2) = (0.3,-0.2)$]{\includegraphics[width=3.0in]{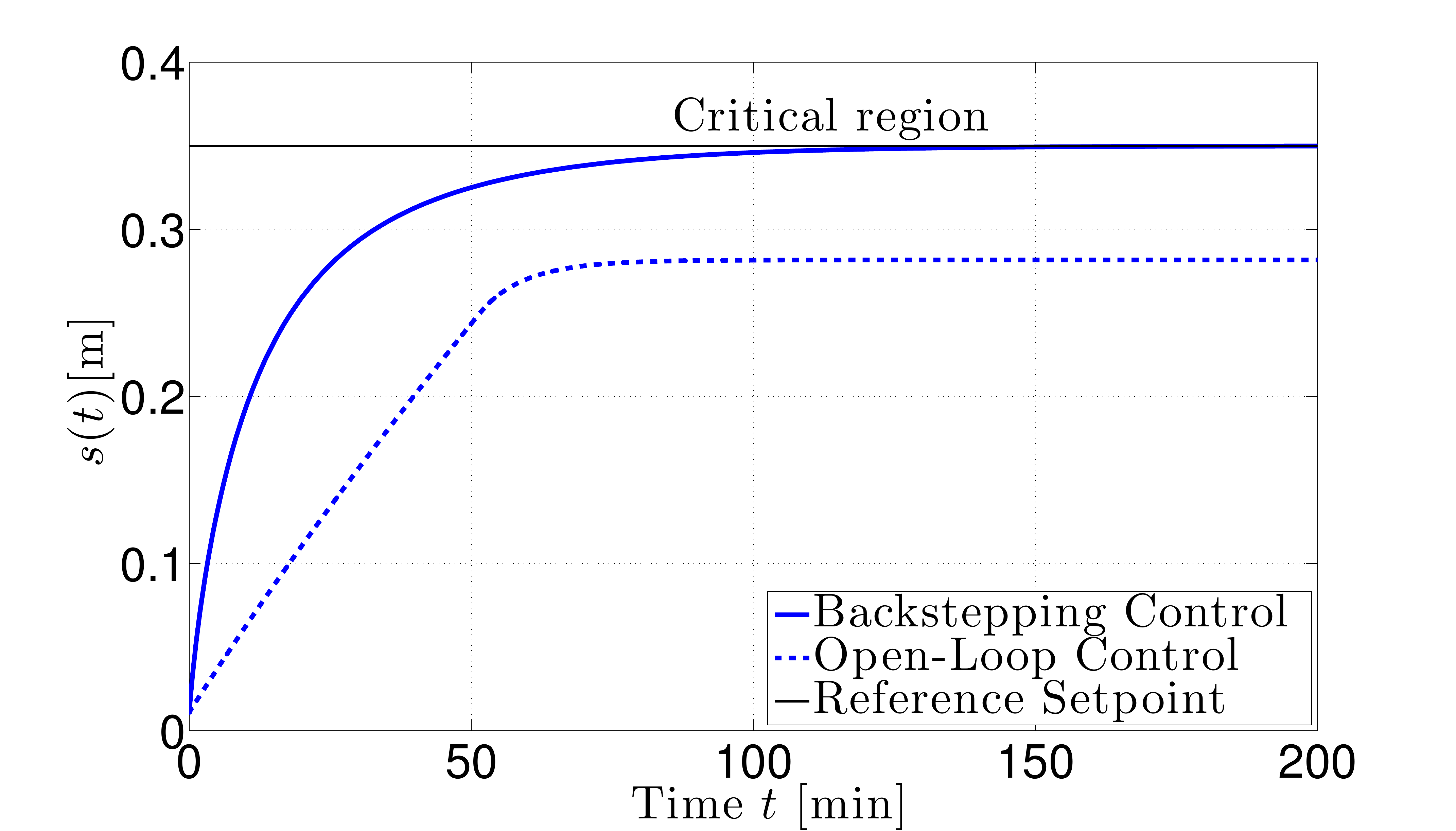}}
\caption{The moving interface responses of the plant \eqref{eq:robustPDE}--\eqref{eq:robustODE} with the open-loop pulse input \eqref{pulse}  (dashed line) and the backstepping control law \eqref{Fullcontrol} (solid line).}
\label{fig:Robust1}
\subfloat[Positivity of the controller remains, i.e. $q_{c}(t)>0$.]{\includegraphics[width=3.0in]{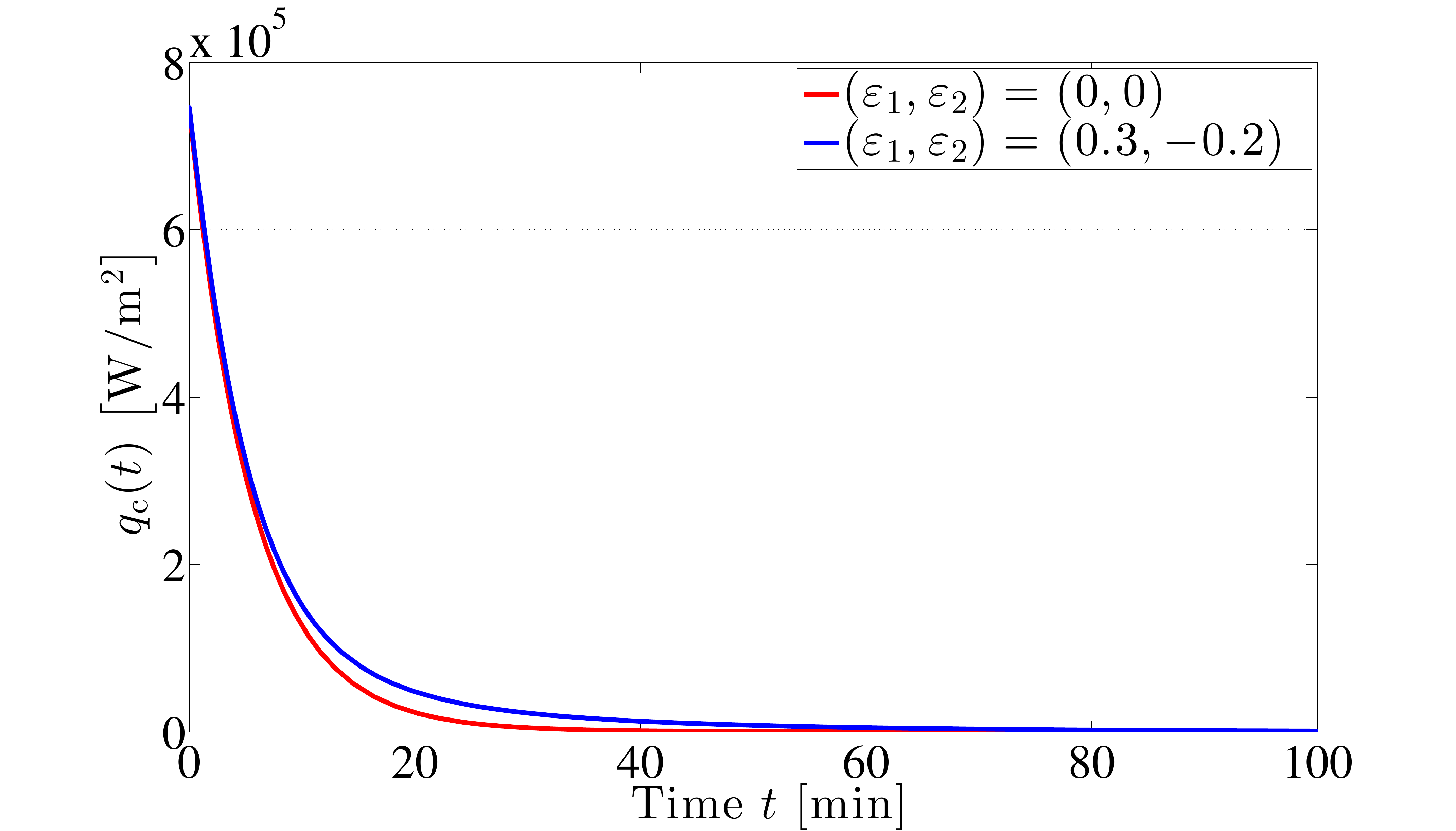}}
\subfloat[$T(s_0,t)$ warms up from $T_m$ and returns to it. ]{\includegraphics[width=3.0in]{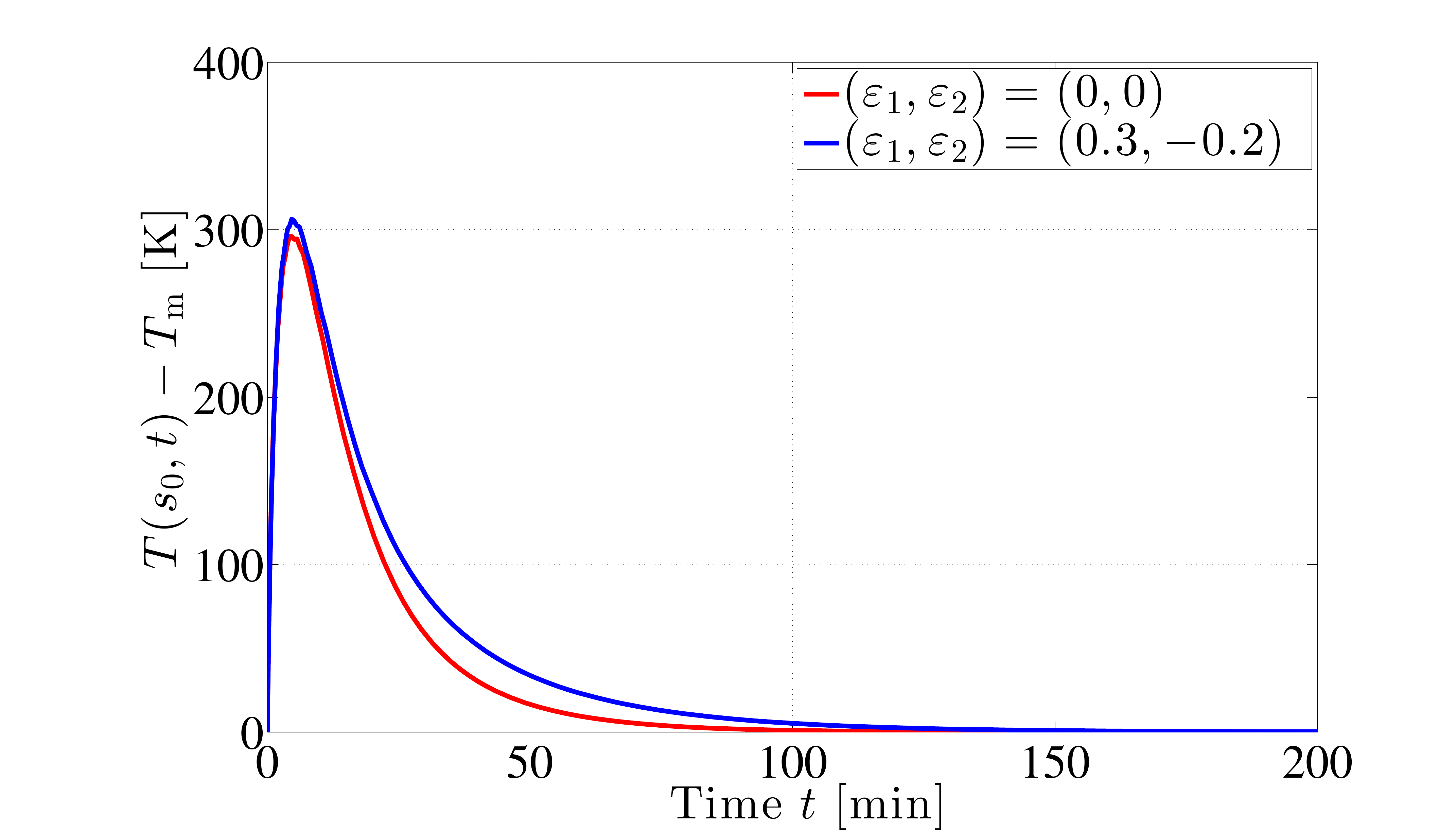}}\\
\caption{The closed-loop responses of the plant \eqref{eq:robustPDE}--\eqref{eq:robustODE} and the backstepping  control law  \eqref{Fullcontrol} with accurate parameters (red) and parameters perturbation (blue).}
\label{fig:Robust2}
\subfloat[$s(t)$ converges to $s_r$ without overshoot.]{\includegraphics[width=3.0in]{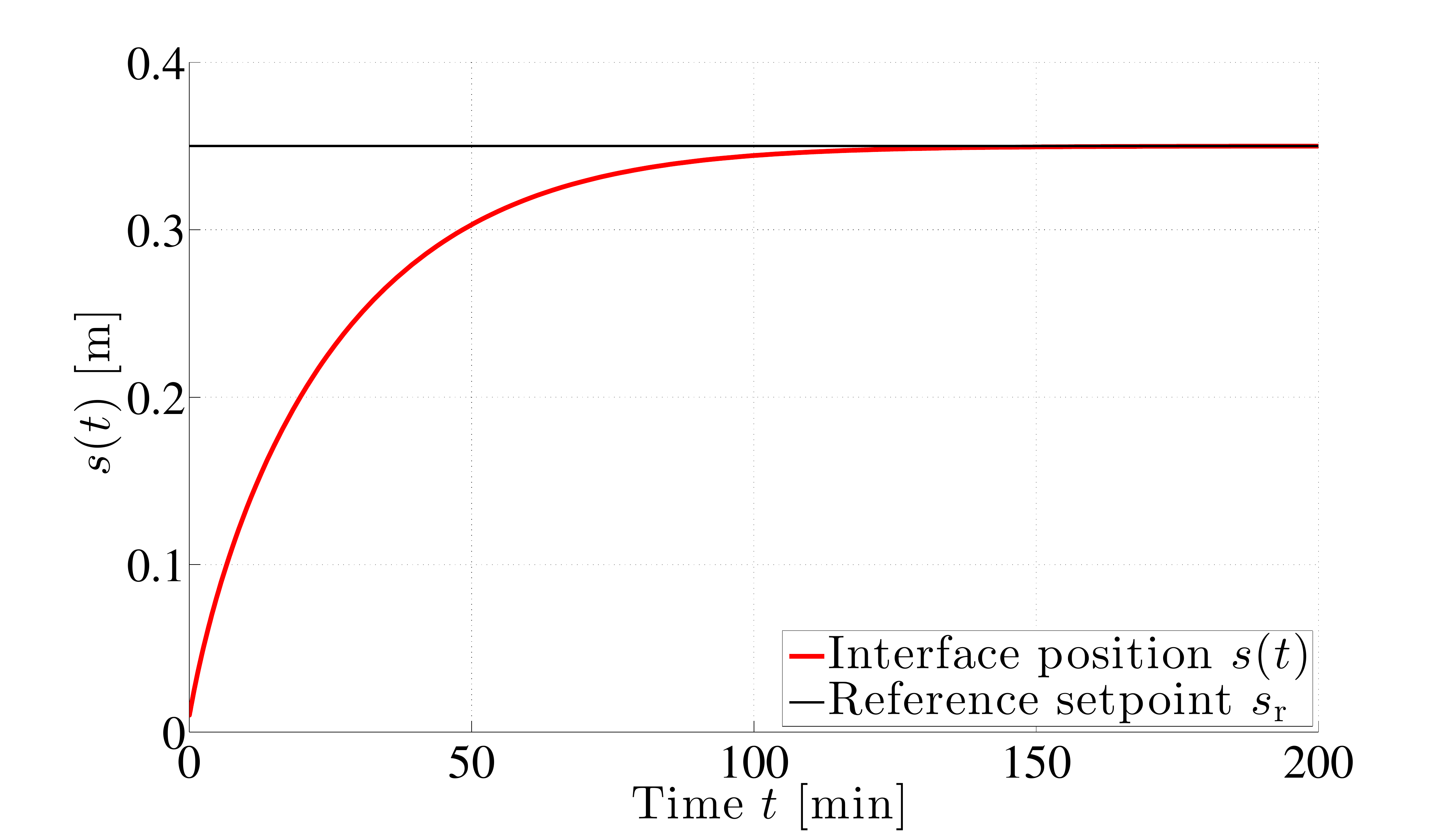}}
\subfloat[Positivity of the output feedback controller remains. ]{\includegraphics[width=3.0in]{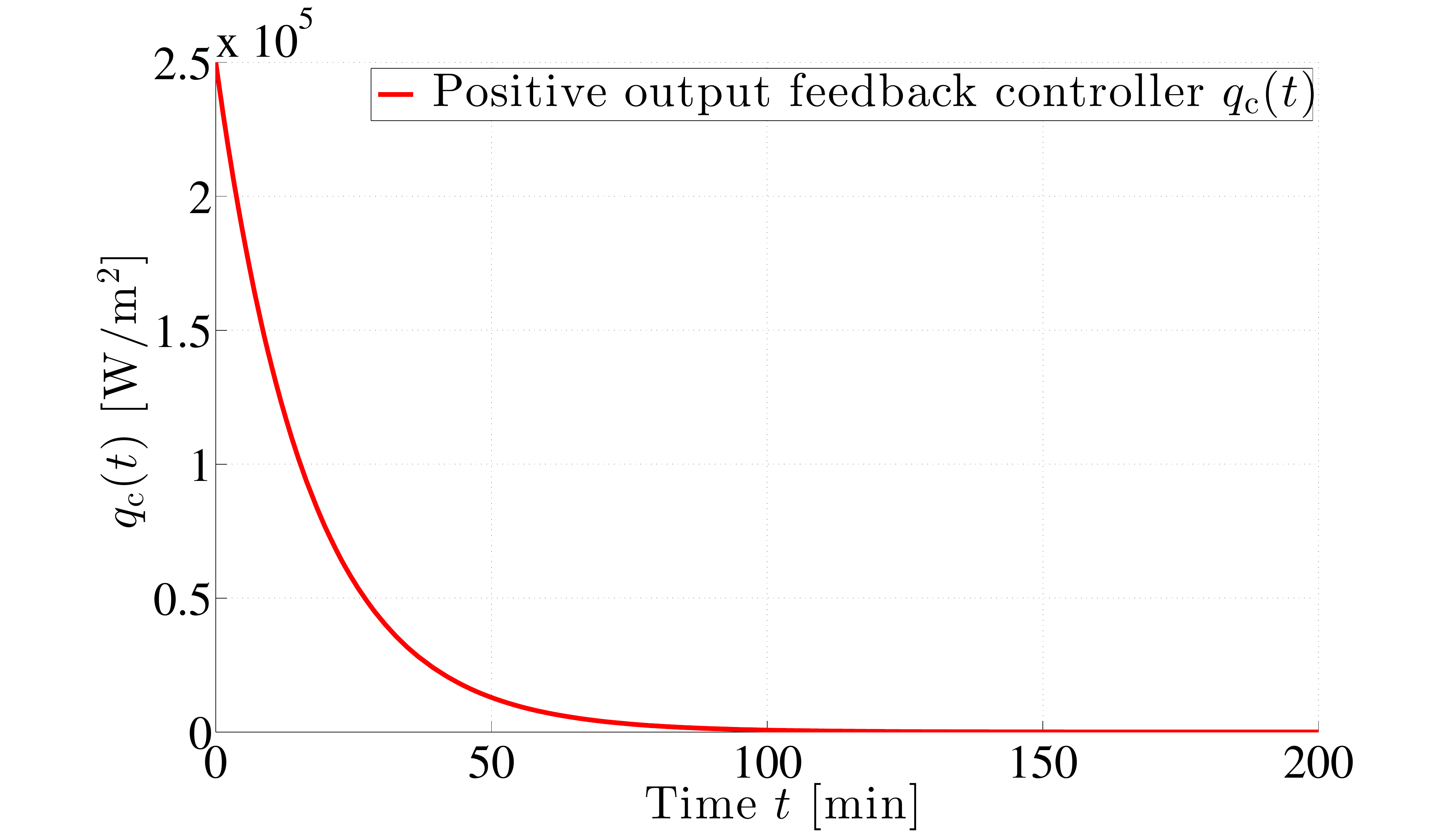}}\\
\subfloat[$T(s_0,t)$ warms up from $T_m$ and returns to it. ]{\includegraphics[width=3.0in]{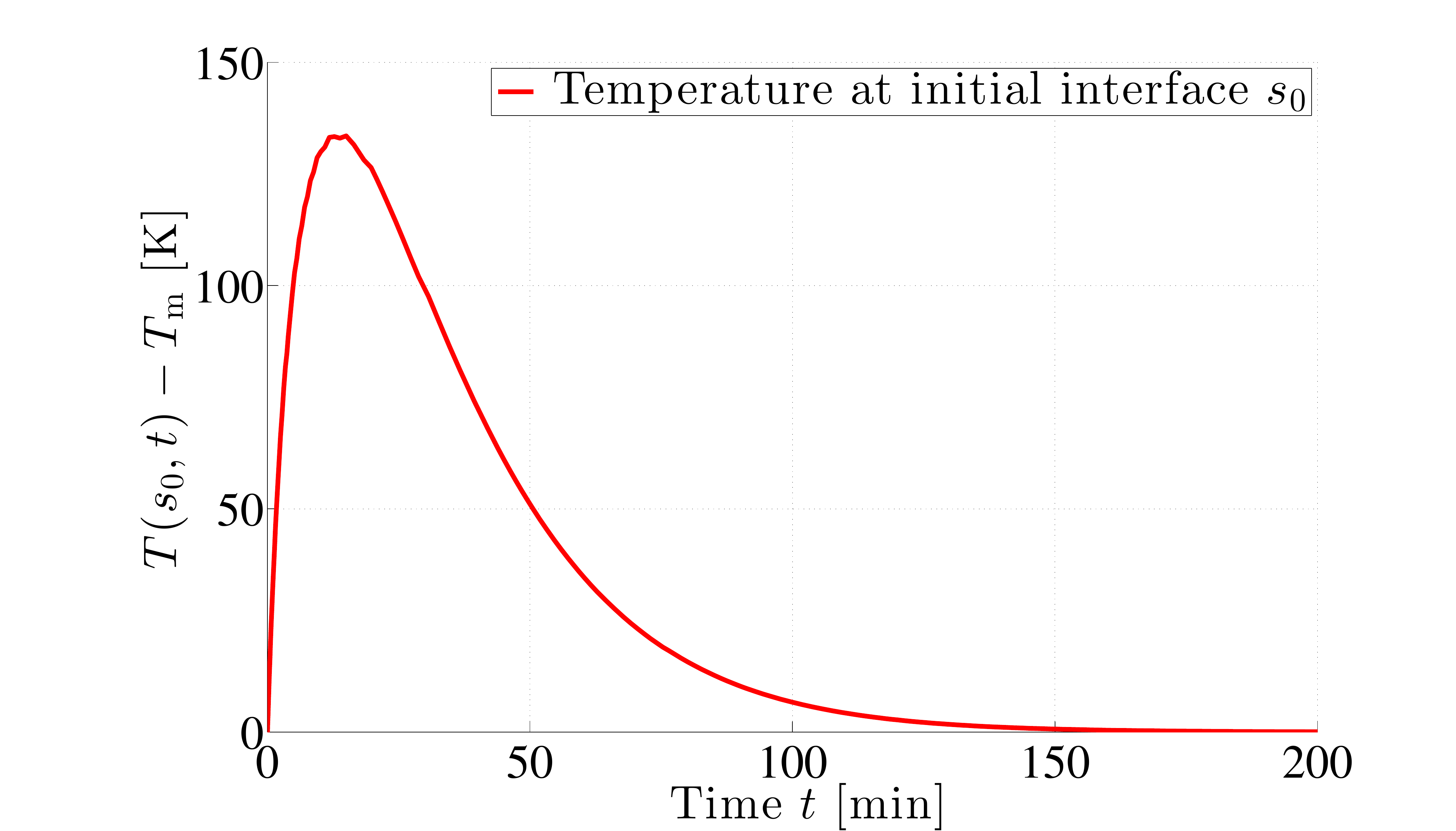}}
\subfloat[Negativity of the estimation errors remains. ]{\includegraphics[width=3.0in]{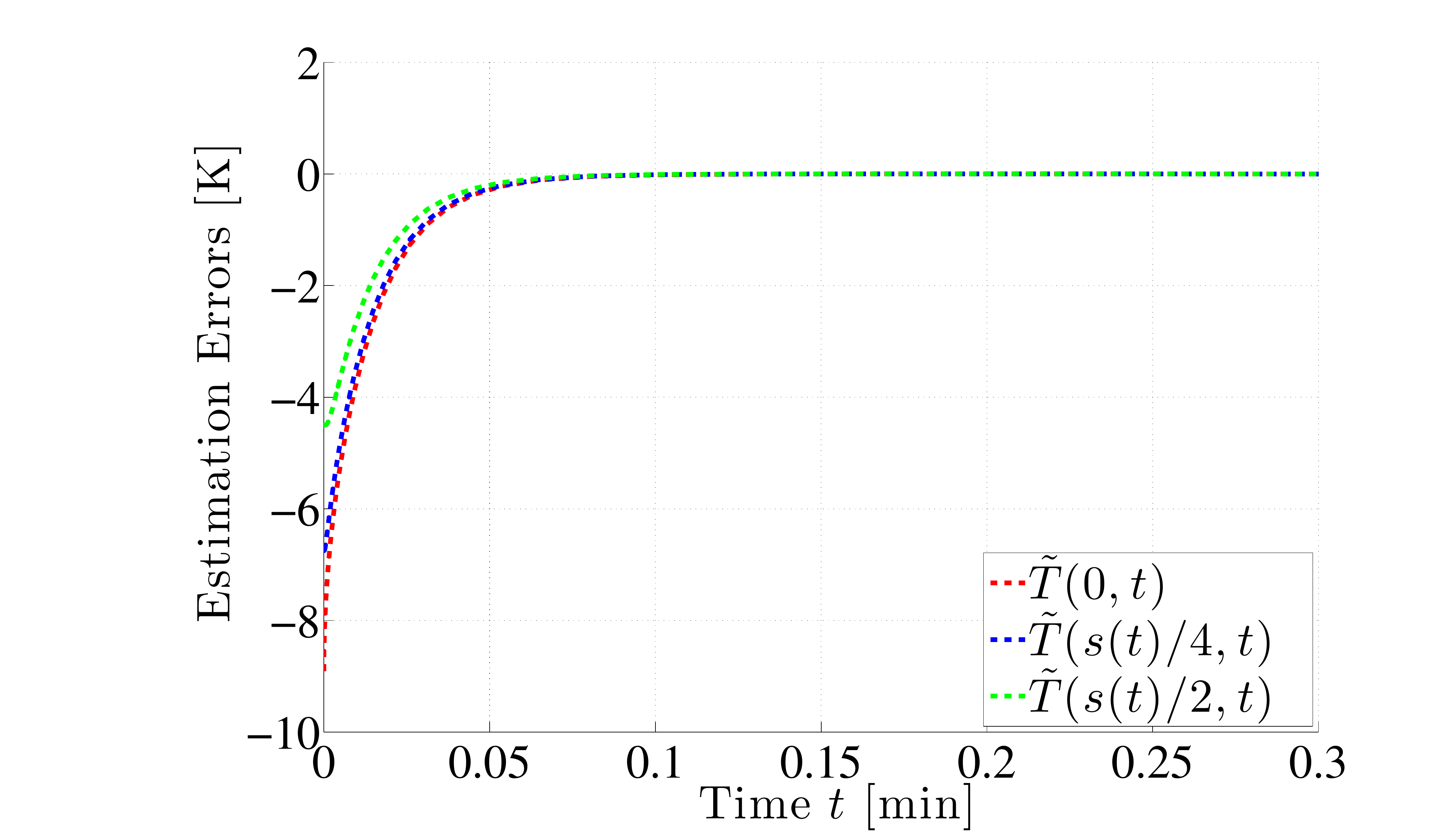}}
\caption{Simulation of the closed-loop system \eqref{eq:stefanPDE} - \eqref{eq:stefanODE} and the estimator  \eqref{observerPDE} - \eqref{P1x} with the output feedback control law \eqref{outctr}.}
\label{fig:simulation}
\end{figure*}

\section{Conclusion}\label{sec:conclusion}
This paper presented a control and estimation design for the one-phase Stefan problem via backstepping method. The system is a diffusion PDE on a moving interface domain which has the dynamics described by ODE. The novelties of this paper are summarized as followings. 
\begin{enumerate}
\item We developed a new approach for the global stabilization of nonlinear parabolic PDE via a nonlinear backstepping transformation. 
\item The closed-loop responses guarantee the physical constraints imposed by the validity of the model. 
\item A new formulation of the Lyapunov function for moving boundary PDE was applied and it showed the exponential stability of the closed loop system. 
\end{enumerate}
We emphasize that the proposed controller does not require the restriction imposed in \cite{maidi2014}  regarding the material properties. In that sense, our design can be applied to a wider  class of melting materials. Also, even if  the controller is same as the one proposed in  \cite{petrus10}, we ensure  the exponential stability of the sum of interface error and ${\cal H}_1$-norm of temperature error which is stronger than the  asymptotical stability result presented in \cite{petrus10}.

\appendices
\section{Backstepping Transformation for Moving Boundary}\label{appA}
\subsubsection{Direct Transformation}\label{appDBST}
Recall that the reference error system defined  in Section \ref{state-feedback} is 
\begin{align}\label{eq:appPDE}
u_{t}(x,t) =& \alpha u_{xx}(x,t), \quad 0\leq x\leq s(t)\\
 \label{eq:appBC1}u_x(0,t) =& -\frac{q_{{\mathrm c}}(t)}{k},\\
 \label{eq:appBC2}u(s(t),t) =&0,\\
\label{eq:appODE}\dot{X}(t) =&-\beta u_x(s(t),t).
\end{align}
Suppose that the backstepping transformation for moving boundary PDE is formulated as
\begin{align}\label{DBSTgene}
w(x,t)=&u(x,t)- \int_{x}^{s(t)} k(x,y)u(y,t) dy\notag\\
&-\phi(x-s(t)) X(t), 
\end{align}
which transforms the reference error system \eqref{eq:appPDE}-\eqref{eq:appODE} into the following target system 
\begin{align}\label{targenePDE}
w_t(x,t)=&\alpha w_{xx}(x,t)+\dot{s}(t)\phi'(x-s(t))X(t), \\
w_{x}(0,t) =& 0, \\
\label{targeneBC2}w(s(t),t) =& 0, \\
\label{targeneODE}\dot{X}(t) =& -cX(t) - \beta w_{x}(s(t),t). 
\end{align}
The stability of the target system \eqref{targenePDE}-\eqref{targeneODE} is guaranteed with the assumptions such that $\dot{s}(t)>0$, $s_0<s(t)<s_{{\mathrm r}}$ as shown in Appendix \ref{stabilitytarget}. 
Then, taking first and second spacial derivatives of \eqref{DBSTgene} and the first time derivative of \eqref{DBSTgene}, we obtain the following relation 
\begin{align}\label{kerPDE}
w_t(x,t)-&\alpha w_{xx}(x,t)-\dot{s}(t)\phi'(x-s(t))X(t)\notag\\
=&-\left(2\alpha \frac{d}{dx}k(x,x)\right)u(x,t)\nonumber\\
&-(\alpha k(x,s(t))-\beta\phi(x-s(t))) u_x(s(t),t)\notag\\
&+ \alpha \int_{x}^{s(t)} (k_{xx}(x,y)-k_{yy}(x,y))u(y,t) dx\nonumber\\
&+\alpha \phi''(x-s(t)) X(t).
\end{align}
Evaluating  \eqref{DBSTgene} and its spacial derivative at $x=s(t)$, we have 
\begin{align}\label{kerBC1}
w(s(t),t)=&-\phi(0) X(t), \\
\label{kerBC2}w_x(s(t),t)=&u_x(s(t),t)-\phi'(0) X(t). 
\end{align}
In order to hold \eqref{targenePDE}-\eqref{targeneODE} for any continuous functions $(u(x,t), X(t))$, by \eqref{kerBC1}-\eqref{kerPDE}, $k(x,y)$ and $\phi(x-s(t))$ satisfy 
\begin{align}
\phi''(x-s(t))=&0, \\
\phi(0)=&0, \quad \phi'(0) = \frac{c}{\beta}, \\
\frac{d}{dx}k(x,x) = &0, \quad k_{xx}(x,y)=k_{yy}(x,y), \\
k(x,s(t)) = &\frac{\beta}{\alpha}\phi(x-s(t)).
\end{align}
The solutions to the gain kernels are given by
\begin{align}
\phi(x-s(t))=&\frac{c}{\beta}(x-s(t)),\\
k(x,y)=&\frac{\beta}{\alpha} \phi(x-y).
\end{align}

\subsubsection{Inverse Transformation}\label{appIBST}
Suppose that the inverse transformation is formulated as
\begin{align}\label{eq:IBST2}
u(x,t) =& w(x,t) +\int_x^{s(t)} l(x,y)w(y,t) dy\notag\\
&+ \psi(x-s(t)) X(t),
\end{align}
where $l(x,y)$, $\psi(x-s(t))$ are the kernel functions. Taking derivative of \eqref{eq:IBST2} with respect to $t$ and $x$, respectively, along the solution of \eqref{eq:tarPDE}-\eqref{eq:tarODE}, the following relations are derived  
\begin{align}\label{eq:gradient}
u(s(t),t) =& \psi(0) X(t), \\
u_x(s(t),t) =& w_x(s(t),t) +\psi'(0)X(t),
\end{align}
\begin{align}\label{time-deriv}
&u_t(x,t) -\alpha u_{xx}(x,t) \notag\\
= & 2\alpha \left(\frac{d}{dx}l(x,x)\right)w(x,t)\notag\\
&-\alpha \int_x^{s(t)} (l_{xx}(x,y)-l_{yy}(x,y))w(y,t) dx\nonumber\\
&+\left(\alpha l(x,s(t)) - \beta\psi(x-s(t))\right)w_x(s(t),t)\notag\\
&- \left(c\psi(x-s(t))+\alpha \psi''(x-s(t))\right) X(t)\nonumber\\
&+\left\{\frac{c}{\beta}\left(1+\int_x^{s(t)} l(x,y) dx\right)-\psi'(x-s(t))\right\}\dot{s}(t) X(t).
\end{align}
In order to hold \eqref{eq:appPDE}-\eqref{eq:appODE} for any continuous functions $(w(x,t), X(t))$, by \eqref{eq:gradient}-\eqref{time-deriv}, $\psi(x-s(t))$ and $l(x,y)$ satisfy 
\begin{align}
\label{eq:hode}
\psi''(x-s(t)) =& -\frac{c}{\alpha} \psi(x-s(t))\\
\label{eq:hic1}
\psi(0) =& 0, \quad  \psi'(0) = \frac{c}{\beta},\\
\label{lcond1}\frac{d}{dx}l(x,x)=&0, \quad l_{xx}(x,y)=l_{yy}(x,y),\\
\label{lcond3} l(x,s(t)) =& \frac{\beta}{\alpha} \psi(x-s(t))\\
\label{addcond}\psi'(x-s(t))=& \frac{c}{\beta}\left(1+\int_x^{s(t)} l(x,y) dx\right).
 \end{align}
Using \eqref{eq:hode} and \eqref{eq:hic1}, the solution of $\psi(x-s(t))$ is given by
\begin{align}\label{hxy}
\psi(x-s(t)) = \frac{\sqrt{c\alpha}}{\beta} {\rm sin}\left( \sqrt{\frac{c}{\alpha}}(x-s(t))\right). 
\end{align}
The conditions \eqref{lcond1}-\eqref{lcond3} yields
\begin{align}\label{lxy}
l(x,y)=&\frac{\beta}{\alpha} \psi(x-y).
\end{align}
Then, the solutions \eqref{hxy} and \eqref{lxy} satisfy the condition \eqref{addcond} as well. 

\section{Stability Analysis}\label{stability}
In this section, we prove the exponential stability of $(w,X)$ system defined in \eqref{targenePDE}-\eqref{targeneODE} via Lyapunov analysis, which induces the stability  of the original $(u,X)$ system. The following assumptions on the interface dynamics
\begin{align}\label{assumdomain}
\dot{s}(t)>0, \quad 0<s_0<s(t)<s_{{\mathrm r}},
\end{align}
which are shown in Section \ref{positivness} are stated. 
\subsubsection{Stability of the Target System}\label{stabilitytarget}
Firstly, we show the exponential stability of the target system \eqref{targenePDE}-\eqref{targeneODE}. Consider the Lyapunov function $V$ such that 
\begin{align}\label{lyapgene}
V = &\frac{1}{2}||w||_{{\cal H}_1}^2+\frac{p}{2}X(t)^2, 
\end{align}
with a positive number $p>0$ to be chosen later. Then, by taking the time derivative of \eqref{lyapgene} along the solution of the target system \eqref{targenePDE}-\eqref{targeneODE}, we have
\begin{align}\label{timeder}
\dot{V}=&-\alpha|| w_{xx}||_{{\cal L}_2}^2 -\alpha ||w_{x}||_{{\cal L}_2}^2 \notag\\
&-pcX(t)^2 - p\beta X(t) w_x(s(t),t) \notag\\
&+\frac{\dot{s}(t)}{2} w_{x}(s(t),t)^2 + w_{t}(s(t),t) w_{x}(s(t),t) \notag\\
&-\dot{s}(t)\phi'(0) X(t)w_{x}(s(t),t) +\dot{s}(t) X(t)\left( \phi''(s(t)) w(0,t) \right. \notag\\
&\left.+ \int_0^{s(t)} f(x-s(t)) w(x,t)  dx\right), 
\end{align}
where $f(x) = \phi'(x) - \phi'''(x)$. The boundary condition \eqref{targeneBC2} yields 
\begin{align}\label{chain}
 w_{t}(s(t),t) =- \dot{s}(t) w_{x}(s(t),t),
\end{align}
 by chain rule $\frac{d}{dt} w(s(t),t) = w_{t}(s(t),t) + \dot{s}(t) w_{x}(s(t),t) = 0$. Substituting \eqref{chain} into \eqref{timeder}, we get
\begin{align}
\dot{V}=&-\alpha|| w_{xx}||_{{\cal L}_2}^2 -\alpha ||w_{x}||_{{\cal L}_2}^2\notag\\
&-pcX(t)^2- p\beta X(t) w_x(s(t),t) \notag\\
&+\dot{s}(t)\left( -\phi'(0) X(t)w_{x}(s(t),t)-\frac{1}{2}w_x(s(t),t)^2 \right)\notag\\
&+\dot{s}(t) X(t)\left( \phi''(s(t)) w(0,t) \right. \notag\\
&\left.+ \int_0^{s(t)} f(x-s(t)) w(x,t)  dx\right).
\end{align}
Define $m =\int_0^{s_{{\mathrm r}}} f(-x)^2 dx$. From the  assumption \eqref{assumdomain}, Young's, Causchy Schwartz, Pointcare's, Agmon's inequality, and choosing $p = \frac{c \alpha}{4 \beta^2 s_{{\mathrm r}}}$, we have
\begin{align}
\dot{V} \leq& - \frac{\alpha}{2}||w_{xx}||_{{\cal L}_2}^2 -\alpha || w_{x}||_{{\cal L}_2}^2 \notag\\
&-\frac{pc}{2}X(t)^2 +\dot{s}(t) \left\{ \frac{1+ \phi'(0)^2}{2} X(t)^2 \right.\notag\\
&\left. + 4 s_{{\mathrm r}} \phi''(s(t))^2 || w_{x}||_{{\cal L}_2}^2 + m || w||_{{\cal L}_2}^2 \right\},\notag\\
 \leq & - \frac{\alpha}{8s_{{\mathrm r}}^2} || w||_{{\cal H}_1}^2 -\frac{pc}{2}X(t)^2  \notag\\
 & +\dot{s}(t) \left\{4 s_{{\mathrm r}} \phi''(s(t))^2  ||w_{x}||_{{\cal L}_2}^2 \right.\notag\\
 &\left. + m || w||_{{\cal L}_2}^2 +  \frac{1+ \phi'(0)^2}{2} X(t)^2 \right\},\notag\\
 \leq & - bV + a \dot{s}(t)V.
\end{align}
where 
\begin{align}
a =& 2 {\rm max} \left\{ 4 s_{{\mathrm r}} \phi''(s(t))^2, m, \frac{1+ \phi'(0)^2}{2p}\right\},\\
b =& {\rm min} \left\{\frac{\alpha}{4s_{{\mathrm r}}^2}, c\right\}.
\end{align}

\subsubsection{Exponential stability for the original $(u, X)$-system}\label{stabilityoriginal}
The norm equivalence between the target system and original system is shown from the direct transformation \eqref{DBSTgene} and the inverse transformation \eqref{eq:IBST2}. Taking the square of the inverse transformation \eqref{eq:IBST2} and applying Young's and Cauchy Schwartz inequality, we have
\begin{align}\label{intnew}
u(x,t)^2 &\leq  3 w(x,t)^2 \notag\\
&+ 3\frac{\beta^2}{\alpha^2} \left( \int_x^{s(t)} \psi(x-y)^2 dy\right) \left( \int_x^{s(t)} w(y,t)^2 dy\right)\notag\\
&+ 3 \psi(x-s(t))^2 X(t)^2.
\end{align}
Integrating \eqref{intnew} from $0$ to $s(t)$ and applying Cauchy Schwartz inequality, we have
\begin{align}\label{app-u-1}
|| u||_{{\cal L}_2}^2  \leq&  3\left(1+\frac{\beta^2}{\alpha^2} s(t) \left( \int_{0}^{s(t)} \psi(-x)^2 dx\right)\right) ||  w ||_{{\cal L}_2}^2 \notag\\
&+ 3 \left( \int_{0}^{s(t)} \psi(-x)^2 dx \right) X(t)^2.
\end{align}
Taking the spatial derivative of \eqref{eq:IBST2} and by the same manner, we have 
\begin{align}
|| u_{x} ||_{{\cal L}_2}^2 &\leq 3 || w_{x} ||_{{\cal L}_2}^2\notag\\
&+  3\frac{\beta^2}{\alpha^2}  \left(\psi(0)^2+s(t) \left( \int_{0}^{s(t)} \psi'(-x)^2 dx\right)\right) || w ||_{{\cal L}_2}^2 \notag\\
&+ 3 \left( \int_{0}^{s(t)} \psi'(-x)^2 dx \right) X(t)^2. 
\end{align}
In this case, we have $\phi(0) = \psi(0) = 0$. Thus, all of the inequality are written as
\begin{align}\label{bound1}
||w||_{{\cal L}_2}^2 &\leq  M_1 ||u||_{{\cal L}_2}^2 +M_2 X(t)^2,\\
\label{bound2}||w_{x}||_{{\cal L}_2}^2 &\leq 3  ||u_{x}||_{{\cal L}_2}^2+ M_3 ||u||_{{\cal L}_2}^2 + M_4X(t)^2,\\
\label{bound3}||u||_{{\cal L}_2}^2  &\leq  M_5 ||w||_{{\cal L}_2}^2 + M_6 X(t)^2,\\
\label{bound4}||u_{x}||_{{\cal L}_2}^2 &\leq 3  ||w_{x}||_{{\cal L}_2}^2+  M_7 ||w||_{{\cal L}_2}^2 + M_8 X(t)^2, 
\end{align}
where $M_1 = 3\left(1+\frac{\beta^2}{\alpha^2} s_{{\mathrm r}} \left( \int_{0}^{s_{{\mathrm r}}} \phi(-x)^2 dx\right)\right)$, $M_2 =  3 \left( \int_{0}^{s_{{\mathrm r}}} \phi(-x)^2 dx \right)$, $M_3 =  3\frac{\beta^2}{\alpha^2} s_{{\mathrm r}} \left( \int_{0}^{s_{{\mathrm r}}} \phi'(-x)^2 dx\right)$, $M_4 = 3 \left( \int_{0}^{s_{{\mathrm r}}} \phi'(-x)^2 dx \right)$, $M_5 = 3\left(1+\frac{\beta^2}{\alpha^2} s_{{\mathrm r}} \left( \int_{0}^{s_{{\mathrm r}}} \psi(-x)^2 dx\right)\right)$, $M_6 = 3 \left( \int_{0}^{s_{{\mathrm r}}} \psi(-x)^2 dx \right)$, $M_7 = 3\frac{\beta^2}{\alpha^2} s_{{\mathrm r}} \left( \int_{0}^{s_{{\mathrm r}}} \psi'(-x)^2 dx\right)$, $M_8 = 3 \left( \int_{0}^{s_{{\mathrm r}}} \psi'(-x)^2 dx \right)$.
Adding \eqref{bound1} to \eqref{bound2} and \eqref{bound3} to \eqref{bound4}, we derive the following inequality
\begin{align}\label{eq:bound}
\underline{\delta}\left(||u||_{{\cal H}_1}^2+X(t)^2\right)&\leq||w||_{{\cal H}_1}^2 +pX(t)^2\notag\\
&\leq \bar{\delta}\left(||u||_{{\cal H}_1}^2+X(t)^2\right),
\end{align}
where $\bar{\delta}={\rm max}\{M_1+M_3, p+M_2+M_4\}$, $\underline{\delta}=\frac{{\rm min}\left\{1,p\right\}}{{\rm max}\left\{M_5+M_7, M_6+M_8+1\right\}}$. Define a parameter $D>0$ as $D=\frac{\bar{\delta}}{\underline{\delta}}e^{as_{{\mathrm r}}}$. Then, with the help of \eqref{eq:bound}, \eqref{expstabilityw}, we deduce that there exists $D>0$ and $b>0$ such that
\begin{align}
||u||_{{\cal H}_1}^2+X(t)^2 \leq D \left(||u_0||_{{\cal H}_1}^2+X(0)^2\right) e^{-bt}.
\end{align}

\section*{Acknowledgment}

The authors would like to thank S. Tang for her help to ensure the first result of Theorem \ref{Theo-1}.

\ifCLASSOPTIONcaptionsoff
  \newpage
\fi

\end{document}